\setlist[enumerate,1]{label={(\alph*)}}
\newtheorem{tm}{Theorem}[section]
\newtheorem{pr}[tm]{Proposition}
\newtheorem{lm}[tm]{Lemma}
\newtheorem{cy}[tm]{Corollary}
\theoremstyle{definition}
\newtheorem{df}[tm]{Definition}
\theoremstyle{remark}
\newtheorem{rem}[tm]{Remark}
\newtheorem{ex}[tm]{Example}
\newcommand{\R}{\mathbb R}
\newcommand{\C}{\mathbb C}
\newcommand{\Z}{\mathbb Z}
\renewcommand{\k}{\Bbbk}
\newcommand{\M}{\mathcal{M}}
\newcommand{\m}{\mathfrak{m}}
\renewcommand{\d}{\partial}
\newcommand{\lrarr}{\longrightarrow}
\DeclareMathOperator{\Id}{Id}
\DeclareMathOperator{\ch}{char}
\newcommand{\spe}[2]{s_{#1}(#2)}
\renewcommand{\i}{\mathfrak{i}}
\newcommand{\I}{\mathcal{I}}
\newcommand{\f}{\mathfrak{f}}
\newcommand{\g}{\mathfrak{g}}
\renewcommand{\b}{\mathfrak{b}}
\renewcommand{\c}{\mathfrak{d}}
\newcommand{\bc}{\bar\c}
\newcommand{\hc}{\bar\c^D}
\newcommand{\tc}{\c^D}
\newcommand{\X}{\mathcal X}
\DeclareMathOperator{\spec}{Spec}
\renewcommand{\O}{\mathcal O}
\newcommand{\Nov}{\mathcal N}
\newcommand{\cC}{\mathcal C}
\DeclareMathOperator{\vol}{vol}
\newcommand{\F}{\mathcal F}
\newcommand{\uD}{\underline D}
\newcommand{\p}{\mathfrak p}
\newcommand{\uchi}{\underline \chi}
\DeclareMathOperator{\coker}{coker}
\newcommand{\cL}{\mathcal L}
\newcommand{\bu}{\mathbf u}
\newcommand{\B}{\mathcal B}
\newcommand{\E}{\mathcal E}
\newcommand{\CR}{\bar\partial_J}
\newcommand{\Proj}{\mathbb{P}}
\newcommand{\Y}{\mathcal Y}
\newcommand{\red}{\text{red}}
\newcommand{\ip}{\mathfrak p}
\newcommand{\contr}{{\mspace{1mu}\lrcorner\mspace{1.5mu}}}
\newcommand{\de}{{\mathbf{d}}}
\newcommand{\mC}{\mathfrak{C}}
\newcommand{\Mt}{\widetilde{\M}}
\newcommand{\mt}{\tilde\m}
\newcommand{\evt}{\widetilde{ev}}
\newcommand{\at}{\tilde{\alpha}}
\newcommand{\s}{\mathfrak{s}}
\newcommand{\st}{\tilde{\mathfrak{s}}}
\newcommand{\mR}{\mathfrak{R}}
\newcommand{\co}{\mathfrak{c}}
\newcommand{\K}{\mathbf{K}}
\author[J. Solomon]{Jake P. Solomon}
\address{Institute of Mathematics\\ Hebrew University, Givat Ram\\Jerusalem, 91904, Israel } \email{jake@math.huji.ac.il}
\begin{document}
\title{Involutions, obstructions and mirror symmetry}
\keywords{$A_\infty$ algebra, Lagrangian, $J$-holomorphic, stable map, obstruction, formal, mirror symmetry, SYZ, fibration, Maslov class}
\subjclass[2010]{53D37, 53D40 (Primary) 14J33, 53D12, 58J32 (Secondary)}
\date{March 2020}
\begin{abstract}
Consider a Maslov zero Lagrangian submanifold diffeomorphic to a Lie group on which an anti-symplectic involution acts by the inverse map of the group. We show that the Fukaya $A_\infty$ endomorphism algebra of such a Lagrangian is quasi-isomorphic to its de Rham cohomology tensored with the Novikov field. In particular, it is unobstructed, formal, and its Floer and de Rham cohomologies coincide. Our result implies that the smooth fibers of a large class of singular Lagrangian fibrations are unobstructed and their Floer and de Rham cohomologies coincide. This is a step in the SYZ and family Floer cohomology approaches to mirror symmetry.

More generally, our result continues to hold if the Lagrangian has cohomology the free graded algebra on a graded vector space $V$ concentrated in odd degree, and the anti-symplectic involution acts on the cohomology of the Lagrangian by the induced map of negative the identity on $V.$ It suffices for the Maslov class to vanish modulo $4.$
\end{abstract}

\dedicatory{To my parents}

\maketitle

\pagestyle{plain}

\tableofcontents

\section{Introduction}
\subsection{Main result}
Let $(X,\omega)$ be a symplectic manifold and let $\phi : X \to X$ be an anti-symplectic involution. That is, $\phi^*\omega = -\omega$ and $\phi^2 = \Id_X.$
\begin{df}
An oriented Lagrangian submanifold $L \subset X$ with a $Pin$ structure $\p$ is called \textbf{$\phi$ anti-symmetric} if
\begin{itemize}
\item
$H^*(L)$ is the free graded commutative algebra on a graded vector space $V$ concentrated in odd degree.
\item
$\phi(L) = L.$
\item
$\phi$ preserves $\p.$
\item
$\phi^*: H^*(L) \to H^*(L)$ is the map induced by $-\Id_V.$
\item
The Maslov class of $L$ vanishes modulo $4.$
\end{itemize}
\end{df}
Denote by $\Nov$ the Novikov field over $\C,$ and denote by $\Nov_+$ the maximal idea in the Novikov ring. See Section~\ref{ssec:ainf} below for the definitions. Background on $A_\infty$ algebras and bounding cochains is given in Section~\ref{sec:alg}. An overview of Fukaya $A_\infty$ algebras and Floer cohomology is given in Section~\ref{sec:Fuk}. Below, all gradings are taken modulo the minimal Maslov number of $L.$ Our main result is the following.
\begin{tm}\label{tm:main}
The Fukaya $A_\infty$ algebra of a $\phi$ anti-symmetric Lagrangian submanifold $L~\subset~X$ is quasi-isomorphic to the graded commutative algebra $H^*(L)\otimes \Nov.$
\end{tm}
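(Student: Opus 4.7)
The plan is to apply the homological perturbation lemma to transfer the Fukaya $A_\infty$ structure to a minimal $A_\infty$ algebra $\{m_k\}$ on $H^*(L)\otimes\Nov$, and then to use the anti-symplectic involution $\phi$ to show that the transferred operations satisfy $m_k=0$ for $k\ne 2$ and that $m_2$ equals the cup product on $H^*(L)=\Lambda^* V$. Since two minimal $A_\infty$ algebras are quasi-isomorphic if and only if they are $A_\infty$ isomorphic, this yields the conclusion. The transfer step is standard, so the content lies in exploiting the involution.

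I would first establish the involution symmetry. Because $\phi$ is anti-symplectic and preserves $L$ together with its $Pin$ structure $\p$, pullback by $\phi$ identifies the moduli space of $J$-holomorphic disks bounding $L$ with the moduli space of $(-\phi^* J)$-holomorphic disks, while reversing the cyclic order of boundary marked points. Invariance of the counts under deformation of the almost complex structure then yields an $A_\infty$ anti-involution of the Fukaya algebra of the form
\[
m_k(a_1,\ldots,a_k) \;=\; (-1)^{\epsilon_k(a_1,\ldots,a_k)}\,\phi^*\,m_k(\phi^* a_k,\ldots,\phi^* a_1),
\]
for a sign $\epsilon_k$ depending on the degrees of the $a_i$, on $k$, and on the Maslov index data. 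The hypothesis that the Maslov class vanishes modulo $4$ is exactly what makes $\epsilon_k$ well defined on the mod-minimal-Maslov-number graded algebra. Choosing a $\phi$-equivariant splitting in the homological perturbation, this symmetry descends to the minimal model.

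Next I would exploit $\phi^*=-\Id_V$: since $V$ is concentrated in odd degree and $H^*(L)=\Lambda^* V$ is freely generated, $\phi^*$ acts on $H^d(L)$ by $(-1)^d$. Substituting into the symmetry and unwinding the Koszul signs on inputs drawn from $V$ shows that for every $k\ne 2$ the symmetry is incompatible with a nonzero value of $m_k$, so $m_k$ vanishes on $V^{\otimes k}$. The $A_\infty$ relations together with the free graded commutative algebra structure of $H^*(L)$ then propagate this vanishing to all of $\Lambda^* V$. For $k=2$ one recovers graded commutativity of the classical cup product; applying the symmetry to the Novikov-positive corrections of $m_2$ forces them to vanish as well, so that $m_2$ coincides exactly with the cup product.

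The principal technical obstacle is the bookkeeping of signs in the involution symmetry: one must combine the $Pin$-structure sign on the orientation of the disk moduli spaces, the sign produced by reversing boundary marked points, the action of $\phi^*$, and the contribution of the Maslov class, and verify that the Maslov-mod-$4$ hypothesis is used at exactly the right place to make $\epsilon_k$ consistent with the mod-minimal-Maslov-number grading. Ensuring that a $\phi$-equivariant homotopy retract exists and that the resulting symmetry on the minimal model takes the precise form above is the key additional technical step before the parity argument can be run.
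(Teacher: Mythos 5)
Your framework is correct up to a point: the paper also (i) uses Theorem~\ref{tm:sign} to establish that the Fukaya $A_\infty$ algebra is self-dual with respect to $\phi^*$, (ii) transfers it to a weakly minimal model on $H^*(L)\otimes\Nov$ via a $\phi$-equivariant homotopy retraction (Proposition~\ref{pr:aslm}), and (iii) deduces from the induced self-duality that $\m_0=\m_1=0$. However, your key step is flawed: the self-duality identity on inputs from $V$ reads
\[
\m_{k,\beta}(v_1,\ldots,v_k)\;=\;-\m_{k,\beta}(v_k,\ldots,v_1),
\]
and this relation holds with the \emph{same} sign for every $k\ge 1$ and every $\beta$; it does not force vanishing for $k\neq 2$. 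It does kill $\m_1$ (and $\m_0$), but for $k=2$ it is exactly the relation satisfied by the wedge product at $\beta=0$, and for $k\ge 3$ a reversal-anti-symmetric multilinear map need not vanish (e.g. $(v_1,v_2,v_3)\mapsto f(v_1)g(v_2)h(v_3)-f(v_3)g(v_2)h(v_1)$). So the sentence ``for every $k\neq 2$ the symmetry is incompatible with a nonzero value of $m_k$'' is false, and the same issue kills the claim that the Novikov-positive corrections to $\m_2$ are forced to vanish.

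What the anti-symmetry actually buys (and what the paper proves) is weaker: the lowest-order $\m_{k,\beta}$ with $k+\beta>2$ is a \emph{closed} Hochschild $2$-cochain (Lemma~\ref{lm:cl}), it is anti-symmetric under reversal (Lemma~\ref{lm:as}), and by the Hochschild--Kostant--Rosenberg description of $HH^*(\Lambda V)$ the total anti-symmetrization of such a cochain vanishes, so the cochain is Hochschild \emph{exact} rather than zero (Proposition~\ref{pr:ex}). This is the essential content missing from your argument. Exactness then feeds into an obstruction-theoretic induction (Proposition~\ref{pr:obt}, Theorem~\ref{tm:asainf}): a $\phi$-equivariant formal diffeomorphism gauges away the lowest nontrivial $\m_{k,\beta}$ while preserving anti-symmetry, raising $\nu(\m)$, and the limit of these gauge transformations yields formality. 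You need this entire Hochschild-cohomology plus obstruction-theory machinery; the parity computation alone cannot close the argument.
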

\begin{cy}\label{cy:main}
Let $L \subset X$ be a $\phi$ anti-symmetric Lagrangian submanifold. Then, we have the following.
\begin{enumerate}
\item\label{it:uo}
$L$ is unobstructed. Moreover, the moduli space of bounding cochains of $L$ is $H^1(L)\otimes \Nov_+.$
\item
For any bounding cochain $b,$ the Floer cohomology ring $HF^*(L,b)$ is isomorphic to $H^*(L) \otimes \Nov.$
\item
$L$ is non-displaceable.
\item
The Fukaya $A_\infty$ algebra of $L$ is formal, i.e. quasi-isomorphic to its cohomology.
\end{enumerate}
\end{cy}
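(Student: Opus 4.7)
The plan is to transfer each of the four claims across the quasi-isomorphism of Theorem~\ref{tm:main} to the model $A := H^*(L)\otimes\Nov,$ viewed as an $A_\infty$ algebra with $m_2$ the graded commutative cup product and $m_k=0$ for $k\neq 2.$ Statement (d) is then immediate, since an $A_\infty$ algebra is formal by definition when it is quasi-isomorphic to its cohomology with vanishing higher operations, and this is precisely $A.$ The remaining three items rest on the standard fact, developed in Fukaya--Oh--Ohta--Ono, that quasi-isomorphic $A_\infty$ algebras have canonically identified moduli spaces of bounding cochains and, for paired bounding cochains, isomorphic deformed Floer cohomology rings; thus everything reduces to calculations inside $A.$

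For (a), I would compute the Maurer--Cartan moduli of $A.$ A bounding cochain is a degree-one element $b$ with $\sum_k m_k(b,\ldots,b)=0,$ which in the model collapses to $b\cup b=0.$ The hypothesis that $H^*(L)$ is the free graded commutative algebra on an odd-degree space $V$ makes every degree-one element odd, so graded commutativity combined with characteristic zero forces $b\cup b=-b\cup b=0$ automatically. Gauge equivalence is likewise trivial: the same hypothesis gives $H^0(L)=\Nov\cdot 1,$ so the degree-zero gauge parameters are scalars; these are central in $A$ and $m_1=0$ leaves no differential correction, so no identifications occur. The moduli space is therefore precisely $H^1(L)\otimes\Nov_+.$

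For (b), the $b$-deformed operations on $A$ simplify to $m_1^b(x)=b\cup x-(-1)^{|x|}x\cup b$ and $m_2^b=\cup,$ since all higher corrections vanish ($m_k=0$ for $k\geq 3$). The odd degree of $b$ and graded commutativity give $m_1^b=0,$ yielding $HF^*(L,b)\cong H^*(L)\otimes\Nov$ as graded rings. Part (c) then follows from the standard principle that non-vanishing of $HF^*(L,b)$ for some bounding cochain forbids a Hamiltonian isotopy from displacing $L.$

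The bulk of the corollary is bookkeeping once Theorem~\ref{tm:main} is established. The one step requiring actual care beyond the theorem is the invariance of bounding-cochain moduli and of the deformed Floer cohomology ring under $A_\infty$ quasi-isomorphism; this is a well-established part of the theory surveyed in Section~\ref{sec:alg} and is the only potential obstacle in the argument.
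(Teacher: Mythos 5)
Your proposal is correct, and for parts (b), (c) and (d) it matches what the paper intends: the text never spells out proofs of these items, leaving them as direct consequences of Theorem~\ref{tm:main} together with the facts recalled in Section~\ref{sec:alg} that quasi-isomorphism preserves the moduli of bounding cochains and, for corresponding bounding cochains, the deformed Floer cohomology. Your model computations are right: with $\m_2(\alpha_1,\alpha_2)=(-1)^{|\alpha_1|}\alpha_1\cup\alpha_2$ and all other $\m_k$ zero, one gets $\m_1^b(x)=\m_2(b,x)+\m_2(x,b)=0$ and $\m_2^b=\m_2$, so $HF^*(L,b)\cong H^*(L)\otimes\Nov$ as a ring, and the Maurer--Cartan equation degenerates to $b\cup b=0$, which holds automatically for odd-degree $b$ in characteristic zero. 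For part (a), though, the paper takes a different and logically earlier route. It does \emph{not} pass through Theorem~\ref{tm:main}: Lemma~\ref{lm:fmbc} proves that in \emph{any} weakly minimal anti-symmetric $A_\infty$ algebra, every $b\in\overline C^1\otimes\Nov_+$ satisfies $\m_k(b^{\otimes k})=0$ for each $k$ separately, using only self-duality and the fact that $\bc$ acts by $(-1)^k$ on degree $k$. Combined with Proposition~\ref{pr:aslm}, this yields part (a) already at the stage of Section~\ref{sec:hpt}, independently of the Hochschild-cohomology obstruction theory and formality of Sections~\ref{sec:hh}--\ref{sec:formality}; your version needs the full strength of Theorem~\ref{tm:main}. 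Both are valid, but the paper's route is worth noting because it isolates where each hypothesis is used. One small caution: your dismissal of gauge equivalence (``no identifications occur'') is terse. In the model, the gauge relation reduces to $(1-c)(b_1-b_0)=0$; ruling out $c=1$ requires invoking the implicit positivity constraint on the gauge parameter. The paper itself sidesteps this by phrasing Lemma~\ref{lm:fmbc} and the ensuing corollary as statements about the \emph{set} of bounding cochains rather than the quotient.
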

\begin{ex}
The condition that $H^*(L)$ be the free graded commutative algebra on a graded vector space concentrated in odd degree is satisfied when $L$ is a Lie group, or more generally, when $L$ admits a $\Gamma$-structure~\cite{Hop41}. Spaces admitting a $\Gamma$ structure include the Grassmannians of Lagrangian subspaces of $\R^{2n}$ for $n$ odd~\cite{AFS12}.
\end{ex}
\begin{ex}\label{ex:gpinv}
The conditions that $\phi^* : H^*(L) \to H^*(L)$ is the map induced by $-\Id_V$ and that $\phi$ preserves $\p$ are satisfied when $L$ is a Lie group and $\phi|_L$ is the inverse map.
\end{ex}

\subsection{Mirror Symmetry}
Denote by $K$ the field of formal Laurent series $\C((t)).$ Mirror symmetry is a correspondence between the geometry of a symplectic manifold $X$ with vanishing first Chern class and the geometry of a smooth projective scheme $\X^\vee \to \spec(K)$ with trivial canonical bundle. Kontsevich~\cite{Ko95} formulated this correspondence as an equivalence between the derived category of coherent sheaves on $\X^\vee$ and a derived version of the Fukaya category of $X.$ Building on Kontsevich's formulation, Strominger, Yau and Zaslow~\cite{SY96}, outlined a program to systematically construct $\X^\vee$ from $X.$ Namely,
one knows that $\X^\vee$ is the space of deformations of the structure sheaf of one of its points. Thus, if one can determine which object of the Fukaya category of $X$ corresponds to the structure sheaf of a point on $\X^\vee,$ one can hope to construct $\X^\vee$ as the space of its deformations. Strominger, Yau and Zaslow, proposed that the mirror of the structure sheaf of a point of $\X^\vee$ is a Lagrangian torus $L \subset X.$ In particular, the Fukaya $A_\infty$ endomorphism algebra of $L$ should be quasi-isomorphic to the differential graded endomorphism algebra of the structure sheaf of a point on $\X^\vee$ after tensoring with the Novikov ring. We proceed to explain how Theorem~\ref{tm:main} implies this quasi-isomorphism for the class of Lagrangian tori proposed in~\cite{SY96}. In addition, we explain an analogy between bounding cochains for the Fukaya $A_\infty$ endomorphism algebra of $L$ and certain Maurer-Cartan elements in the differential graded endomorphism algebra of the structure sheaf of a point in $\X^\vee.$

Let $P$ be a $K$ point of $\X^\vee$ and denote by $\O_P$ its structure sheaf. Denote by $R$ the ring of formal power series $\C[[t]]$, and denote by $\m \subset R$ the maximal ideal.
\begin{pr}\label{pr:ag}
There is a smooth projective scheme $\widetilde \X^\vee \to \spec(R)$ with generic fiber $\X^\vee$ such that, denoting by $\widetilde P$ the $R$ point of $\widetilde \X^\vee$ obtained by taking the closure of $P,$ the closed point $P_0 = \widetilde P \times_{\spec R}\spec(\C),$ is a regular point of the closed fiber $\X^\vee_0 = \widetilde \X^\vee \times_{\spec(R)}\spec(\C)$. Moreover, the differential graded endomorphism algebra $End(\O_P)$ is quasi-isomorphic to
$\Lambda(T_{P_0}\X_0^\vee) \otimes K.$ In particular, the moduli space of Maurer-Cartan elements of $End(\O_P)$ that extend trivially to $P_0$ is $T_{P_0}\X^\vee_0 \otimes \m.$
\end{pr}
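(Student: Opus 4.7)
The plan is to prove the three claims in the order stated: existence of the smooth model $\widetilde\X^\vee$, identification of the DGA $End(\O_P)$ with the exterior algebra on $T_{P_0}\X_0^\vee$, and enumeration of the Maurer--Cartan elements.

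For the existence of $\widetilde\X^\vee$, the approach is to choose a projective embedding $\X^\vee \hookrightarrow \Proj^N_K$ and take the scheme-theoretic closure in $\Proj^N_R$. This yields a flat projective $R$-scheme whose generic fiber is $\X^\vee$, and the section $\widetilde P$ extending $P$ exists automatically by the valuative criterion of properness. Since $\ch(\C) = 0$, Hironaka's resolution of singularities followed by additional blowups will produce a model smooth over $R$ in a neighborhood of $\widetilde P$ with $P_0$ a smooth point of the special fiber; any obstruction point in the image of $\widetilde P$ can be resolved without altering the generic fiber.

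For the identification of $End(\O_P)$, I would observe that $\widetilde P \hookrightarrow \widetilde\X^\vee$ is a regular embedding of codimension $n = \dim \X^\vee$ near $P_0$. The Koszul complex on a regular system of parameters cutting out $\widetilde P$ gives an explicit locally free resolution of $\O_{\widetilde P}$, which base-changes to a resolution of $\O_P$ over the generic fiber. A standard local calculation then gives $\mathcal{E}xt^i_{\O_{\X^\vee}}(\O_P,\O_P) \cong \Lambda^i N_{P/\X^\vee} \cong \Lambda^i(T_{P_0}\X_0^\vee)\otimes K$ as graded algebras. To upgrade this to a DGA quasi-isomorphism (formality), I would invoke the local Hochschild--Kostant--Rosenberg theorem applied to the completion $\O^\wedge_{\X^\vee,P} \cong K[[x_1,\dots,x_n]]$, which produces an explicit quasi-isomorphism between the Koszul-resolved endomorphism DGA and the exterior algebra $\Lambda^*(T_{P_0}\X_0^\vee)\otimes K$ with trivial differential.

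For the Maurer--Cartan moduli, once $End(\O_P)$ has been identified with the formal graded commutative DGA $\Lambda^*(T_{P_0}\X_0^\vee)\otimes K$ with zero differential, the equation $dx + x\cdot x = 0$ for a degree-one element $x$ reduces to $x\wedge x = 0$. Since all generators lie in odd degree, this is automatic for every $x \in T_{P_0}\X_0^\vee \otimes K$. The gauge action by $\exp$ of the degree-zero part is trivial, because $d = 0$ and graded commutativity makes the bracket vanish on degree-one elements. Working instead with the integral model $End(\O_{\widetilde P})$, whose formal counterpart is $\Lambda^*(T_{P_0}\X_0^\vee)\otimes R$, the Maurer--Cartan set is $T_{P_0}\X_0^\vee \otimes R$, and the condition of extending trivially to $P_0$ (i.e.\ specializing to zero modulo $\m$) cuts out the submodule $T_{P_0}\X_0^\vee \otimes \m$.

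The principal obstacle is the formality assertion in the second step: while the cohomology calculation is routine, producing an actual DGA quasi-isomorphism---rather than just a graded algebra isomorphism on cohomology---requires the HKR machinery or an equivalent Koszul-duality argument, and one must verify that this quasi-isomorphism exists over the integral base $R$ so that the Maurer--Cartan analysis in the final step transports faithfully. Once formality is in hand, the remaining steps are essentially bookkeeping.
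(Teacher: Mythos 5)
Your outline matches the paper's broad architecture (closure in $\Proj^N_R$, resolution, Koszul resolution of the point, odd-degree argument for Maurer--Cartan), but the first step has a genuine gap, and the second uses a heavier tool than the paper needs.

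On the first claim: "Hironaka's resolution of singularities followed by additional blowups\ldots; any obstruction point in the image of $\widetilde P$ can be resolved" does not address the actual difficulty. After resolving the closure $\widetilde\Y^1\subset \Proj^N_R$ to obtain a regular model with special fiber whose reduction is simple normal crossings, the closed fiber can still be \emph{non-reduced} at $P_0$, i.e.\ $t$ can pull back to $u y_1^k$ with $k>1$ in a regular system of parameters. Blowups alone do not obviously fix this; one needs to see \emph{why} the multiplicity is forced to be $1$ along the section. The paper does this in two steps you did not identify: first it applies Temkin's embedded resolution theorem to arrange that the strict transform of $\widetilde P$ meets the reduced special fiber with simple normal crossings; second, it uses that $\widetilde P$ is an honest $R$-point---so $\O_{\widetilde P,P_0}\simeq R$ and $i^\#r^\#t=t$---to deduce from $(i^\#u)v^kt^{kl}=t$ that $k=l=1$, whence $\I_{\Y_0,P_0}=(y_1)$ and $P_0$ is a regular point of the actual (not reduced) closed fiber. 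Without that multiplicity argument, your proof of the first claim does not close.

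On the second claim, your plan (Koszul resolution of $\O_{\widetilde P}$ over the integral model, then HKR to upgrade the $\mathcal{E}xt$ isomorphism to a DGA quasi-isomorphism) should work, but it is heavier than needed and you flag formality as a concern where the paper simply sidesteps it. The paper works directly over the generic fiber: after reducing to an affine $\spec(A)$ with $\p$ cut out by a regular sequence, it constructs the Koszul complex $\K=\Lambda E$ with differential $\iota_f$ and observes that the contraction map $\iota:\Lambda E^\vee\to \mathrm{End}_A(\K)$ is already a homomorphism of \emph{differential graded} algebras; precomposing with the inclusion $\Lambda V^\vee\hookrightarrow\Lambda E^\vee$ gives an explicit DGA map that is a quasi-isomorphism because $\K$ is free and $qp:\mathrm{Hom}_A(\K,\K)\to\mathrm{Hom}_A(\K,A/\p)$ induces an isomorphism on cohomology. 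No appeal to HKR is required, and formality comes for free since the source has zero differential. Your third paragraph (Maurer--Cartan set is all of degree one by odd-degree graded commutativity; gauge action trivial; ``extends trivially to $P_0$'' picks out $T_{P_0}\X_0^\vee\otimes\m$) agrees with the paper's one-line deduction.
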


The following is an immediate consequence of Theorem~\ref{tm:main} and Example~\ref{ex:gpinv}.
\begin{cy}\label{cy:syz}
Let $L \subset X$ be a Maslov zero Lagrangian torus. Suppose there exists an anti-symplectic involution $\phi : X \to X$ such that $\phi(L) = L$ and $\phi$ acts on $L$ by the inverse map of the torus. Then the $A_\infty$ endomorphism algebra of $L$ in the Fukaya category is quasi-isomorphic to $\Lambda H^1(L;\R) \otimes \Nov.$ In particular, the moduli space of bounding cochains of $L$ is $H^1(L;\R)\otimes \Nov_+.$
\end{cy}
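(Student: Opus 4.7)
The plan is to reduce to Theorem~\ref{tm:main} by verifying, one by one, the five conditions in the definition of a $\phi$ anti-symmetric Lagrangian for $L$ equipped with an appropriate $Pin$ structure. Since the hypotheses of the corollary are tailored precisely for this reduction, I expect no substantive obstacle: the argument is essentially a checklist, with Example~\ref{ex:gpinv} doing the heavy lifting on the most delicate items.

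First I would dispose of the easy conditions. Because $L \cong T^n$, the cohomology ring is $H^*(L;\R) = \Lambda H^1(L;\R)$, which is the free graded commutative algebra on $V := H^1(L;\R)$, a graded vector space concentrated in degree one; this also identifies $H^*(L) \otimes \Nov$ with $\Lambda H^1(L;\R) \otimes \Nov$, the purported target of the quasi-isomorphism. The invariance $\phi(L) = L$ is part of the hypothesis, and the Maslov class vanishes by assumption, hence a fortiori modulo $4$.

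The remaining two conditions --- that $\phi$ preserve a $Pin$ structure $\p$ on $L$ and act on $H^*(L)$ by the map induced by $-\Id_V$ --- are the content of Example~\ref{ex:gpinv}. For the cohomology action, the inverse map on $T^n$ is $-\Id$ on the tangent space at the identity, hence $-\Id$ on $H^1(L;\R) = V$, and consequently the map induced by $-\Id_V$ on $H^*(L) = \Lambda V$. For the $Pin$ structure, I would take the one arising from the left-invariant trivialization of $TL$ afforded by the group structure of $T^n$; the inverse map preserves this trivialization up to the action of $-\Id \in O(n)$, which admits a canonical lift to $Pin(n)$, so $\phi|_L$ lifts to an involution on the associated $Pin$ bundle.

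With every defining property of a $\phi$ anti-symmetric Lagrangian verified, Theorem~\ref{tm:main} yields the desired quasi-isomorphism of the Fukaya $A_\infty$ algebra of $L$ with $\Lambda H^1(L;\R) \otimes \Nov$, and Corollary~\ref{cy:main}(a) identifies the moduli space of bounding cochains with $H^1(L;\R) \otimes \Nov_+$. The only step requiring genuine geometric input is the $Pin$-structure bookkeeping, and since it is already subsumed into Example~\ref{ex:gpinv}, no separate argument is needed here.
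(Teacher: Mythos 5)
Your proof is correct and follows the same route the paper intends: the paper simply states the corollary as ``an immediate consequence of Theorem~\ref{tm:main} and Example~\ref{ex:gpinv},'' and your argument is a careful unpacking of that assertion, verifying each of the five conditions in the definition of a $\phi$ anti-symmetric Lagrangian and then invoking Theorem~\ref{tm:main} and Corollary~\ref{cy:main}\ref{it:uo}. The only detail worth flagging is that the corollary, as stated, does not fix a $Pin$ structure on $L$, so one is free to choose the one coming from the group trivialization as you do; your observation that the inverse map acts by $-\Id$ in the left-invariant trivialization and that $-\Id \in O(n)$ lifts to $Pin(n)$ is exactly the substance of Example~\ref{ex:gpinv}.
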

Comparing Proposition~\ref{pr:ag} and Corollary~\ref{cy:syz}, we obtain the desired quasi-isomorphism of endomorphism algebras if there exists an involution $\phi$ as specified. The following two examples show that such involutions do indeed exist for the Lagrangian tori relevant to the program of Strominger, Yau and Zaslow.

\begin{ex}
The Lagrangian tori described in the following example were considered by Strominger, Yau and Zaslow, in Section 4.2 of~\cite{SY96} in dimension $3.$ For $t \in \C,$ let
\[
X_t = \left\{[z_0,\ldots,z_n] \in \C P^n \left| t\sum_{i = 0}^n z_i^{n+1} + \prod_{i = 0}^n z_i = 0\right.\right\}
\]
and let $\omega_t$ be the symplectic form on the smooth locus of $X_t$ induced from the Fubini-Study form on $\C P^n.$ For $t \in \R,$ take $\phi_t$ to be the anti-symplectic involution of $X_t$ induced by complex conjugation. The smooth locus of $X_0$ is the union of $n+1$ copies of $(\C^*)^{n-1}$ and thus comes with a natural Lagrangian torus fibration given by products of circles of fixed modulus in each $\C^*$ factor. The fibers are preserved by $\phi_0$ and $\phi_0$ acts on each fiber by the inverse map of the torus group. Symplectic parallel transport allows us to deform any one of these fibers $L_0$ to a smooth Lagrangian torus $L_t \subset X_t$ such that $\phi_t(L_t) = L_t.$ By continuity, $\phi_t$ acts on $L_t$ by the inverse map of the torus group. Furthermore, we claim that $L_t$ has vanishing Maslov class. Indeed, let $\Omega_t$ denote the holomorphic volume form on the smooth locus of $X_t$ obtained as the Poincar\'e residue of the meromorphic form $\widetilde \Omega_t$ on $\C P^n$ given by
\[
\widetilde \Omega_t = \frac{\sum_{i = 0}^n (-1)^i z_i \wedge_{j \neq i} dz_j}{ t\sum_{i = 0}^n z_i^{n+1} + \prod_{i = 0}^n z_i}.
\]
For any Lagrangian $L \subset X_t,$ the Lagrangian angle $\theta_L : L \to S^1$ is defined by the equation
\[
\Omega_t|_L = \rho e^{\sqrt{-1} \theta_L} \vol_L,
\]
where $\rho : L \to \R_{>0}$. The Maslov class of $L$ vanishes if and only if $\theta_L$ lifts to a real valued function. The Lagrangian angle of $L_0$ is constant, so it lifts. By continuity, so does the Lagrangian angle for $L_t.$ Therefore, $L_t$ has vanishing Maslov class and Corollary~\ref{cy:syz} applies. A similar construction works for any real toric degeneration of Calabi-Yau manifolds.
\end{ex}

Let $X$ be a symplectic manifold of dimension $2n$ and let $B$ be a smooth manifold of dimension $n.$ A \textbf{Lagrangian fibration} is a continuous map $f: X \to B$ such that each fiber of $f$ contains a relatively open dense subset that is a smooth Lagrangian submanifold of $X.$ If $X$ and $\X^\vee$ correspond under mirror symmetry, Strominger, Yau and Zaslow~\cite{SY96} further predicted that the Lagrangian tori in $X$ mirror to structures sheaves of points on $\X^\vee$ fill out the whole of $X$ giving rise to a Lagrangian fibration.
\begin{ex}\label{ex:lf}
Consider a Lagrangian fibration $f : X \to B$ of the class $\cC$ considered in~\cite{CM09,CM10}. In particular, the class $\cC$ includes fibrations with total space homeomorphic to any of the two or three dimensional Calabi-Yau complete intersections in toric manifolds produced in the Batyrev-Borisov mirror construction~\cite{BB96}. It is shown in~\cite{CM10} that there exists an anti-symplectic involution $\phi: X \to X$ such that $f \circ \phi = f.$ That is, $\phi$ preserves the fibers of $f.$ The proof of Corollary 1.8 in~\cite{CM10} shows that $\phi$ acts on each smooth fiber by the inverse map of the torus group. Proposition~\ref{pr:m0} and Example~\ref{ex:cbm} show that all smooth fibers have vanishing Maslov class. So, Corollary~\ref{cy:syz} applies. The same argument should extend to any reasonably well behaved Lagrangian fibration.
\end{ex}

\subsection{Context}
The unobstructedness and the computation of the Floer cohomology of the smooth fibers of the Lagrangian fibrations in Example~\ref{ex:lf} was announced in~\cite{CM10}. A different proof of unobstructedness of the smooth fibers of a class of Lagrangian fibrations was recently given by Shelukhin, Tonkonog and Vianna in~\cite{STV18}.

The unobstructedness of fibers of Lagrangian fibrations is needed for the family Floer cohomology approach to proving homological mirror symmetry initiated by Fukaya~\cite{Fu02aa} and further developed in recent work of J. Tu~\cite{TuJ12} and M. Abouzaid~\cite{Abo14,Abo17}.

There is a considerable body of work concerning Lagrangian submanifolds fixed pointwise by an anti-symplectic involution. Fukaya-Oh-Ohta-Ono study the Floer cohomology of such Lagrangians~\cite{FO17a}. In particular, they show that a Lagrangian submanifold in a Calabi-Yau manifold that is fixed by an anti-symplectic involution is unobstructed and has non-trivial Floer cohomology. Furthermore, one can define open or real Gromov-Witten type invariants for Lagrangians fixed by anti-symplectic involutions in many situations~\cite{Ch08,Ge13,So06,ST16b,We05,We05a}. Such open Gromov-Witten invariants have been shown to satisfy mirror symmetry~\cite{PS08,Wa07}. The present work considers the somewhat different setting where an anti-symplectic involution preserves the Lagrangian but does not fix it pointwise.

\subsection{Outline}
Section~\ref{sec:alg} collects definitions and results concerning $A_\infty$ algebras and their homomorphisms. In particular, we discuss flat and minimal $A_\infty$ algebras, opposite $A_\infty$ algebras and homomorphisms, formal diffeomorphisms, $G$ gapping, quasi-isomorphisms and bounding cochains. Section~\ref{sec:crpbvp} recalls the definition and basic properties of a Cauchy-Riemann $Pin$ boundary value problem from~\cite{So06}. In Section~\ref{sec:Fuk}, we define open stable maps and establish notation for moduli spaces thereof. We explain how Cauchy-Riemann $Pin$ boundary value problems give rise to canonical orientations of these moduli spaces. The section culminates with an overview the Fukaya $A_\infty$ algebra of a Lagrangian submanifold. The main result of Section~\ref{ssec:inv} is Theorem~\ref{tm:sign}, which asserts that an anti-symplectic involution $\phi$ induces an isomorphism from the Fukaya $A_\infty$ algebra of a Lagrangian submanifold $L$ to the opposite Fukaya $A_\infty$ algebra of $\phi(L).$ The proof uses the theory of Cauchy-Riemann Pin boundary value problems. In particular, if $L$ is preserved by $\phi,$ as is the case for anti-symmetric Lagrangians, there is an involutive isomorphism from the Fukaya $A_\infty$ algebra of $L$ to its own opposite. Such an $A_\infty$ algebra is called self-dual.

Given that the Fukaya $A_\infty$ algebra of an anti-symmetric Lagrangian is self-dual, an essentially algebraic argument completes the proof of Theorem~\ref{tm:main}. In Section~\ref{sec:hpt}, we begin by recalling the homological perturbation lemma and giving a sufficient condition for a self-dual $A_\infty$ algebra to be quasi-isomorphic to a self-dual weakly minimal $A_\infty$ algebra.  The notion of an anti-symmetric $A_\infty$ algebra is defined, and it is shown that the Fukaya $A_\infty$ algebra of an anti-symmetric Lagrangian submanifold is quasi-isomorphic to a weakly minimal anti-symmetric $A_\infty$ algebra. Finally, we give a direct proof of Corollary~\ref{cy:main}\ref{it:uo} concerning the moduli space of bounding cochains of an anti-symmetric Lagrangian submanifold. Section~\ref{sec:hh} recalls the definition and basic properties of Hochschild cohomology for graded algebras. For the free graded commutative algebra on a graded vector space, the notion of an anti-symmetric Hochschild cochain is defined and it is shown that a closed anti-symmetric Hochschild cochain is exact. Section~\ref{sec:formality} begins by developing obstruction theory for self-dual $A_\infty$ algebras. Obstruction theory and the results of Section~\ref{sec:hh} prove Theorem~\ref{tm:asainf}, which asserts that an anti-symmetric $A_\infty$ algebra is quasi-isomorphic to the free graded commutative algebra on a graded vector space. The proof of Theorem~\ref{tm:main} is given at the end of Section~\ref{sec:formality}.

Appendix~\ref{sec:lf} defines the notion of a tame Lagrangian fibration and shows a smooth fiber of such a fibration has vanishing Maslov class. This result is used in Example~\ref{ex:lf} above. Appendix~\ref{sec:ma} gives a proof of Proposition~\ref{pr:ag}. Appendix~\ref{sec:extmodE} explains how the construction of~\cite{Fu09a,FO09}, in which truncated $A_\infty$ algebras associated to a Lagrangian submanifold are extended to a full $A_\infty$ algebra, can be carried out preserving self-duality.

\subsection{Acknowledgements}
The author would like to thank G. Tian for his constant encouragement and P. Seidel for a conversation that deeply influenced the present work. The author would also like to thank R. Bezrukavnikov, D. Kaledin, D. Kazhdan, M. Temkin, and Y. Varshavsky, for helpful conversations, and an anonymous referee for helpful suggestions. The author was partially supported by ERC Starting Grant 337560 and ISF Grant 569/18.

\section{Algebraic preliminaries}\label{sec:alg}
In this section, we recall a number of definitions and results concerning $A_\infty$ algebras from~\cite{FO09,Se08}. However, our terminology occasionally differs from those references. We also discuss opposite $A_\infty$ algebras and self-duality.

\subsection{\texorpdfstring{$A_\infty$}{A-infinity} algebras and homomorphisms}\label{ssec:ainf}
Let $\k$ be a field. Denote by
\[
\Nov = \left.\left\{\sum  a_i T^{E_i}\right|a_i \in \k,\; E_i \in \R, \; \lim_{i \to \infty} E_i = \infty\right\}
\]
the Novikov field over $\k$. Let $\| \cdot \| : \Nov \to \R_{>0}$ be the non-Archimedean norm given by $\| 0 \| = 0$ and
\begin{equation}\label{eq:norm}
\left\|\sum_i a_i T^{E_i}\right\| = \exp(-\min_i E_i).
\end{equation}
Denote by
\[
\Nov_0 = \{ \lambda \in \Nov | \|\lambda \| \leq 1\}, \qquad \Nov_+ = \{\lambda \in \Nov| \|\lambda \| < 1\},
\]
the Novikov ring over $\k$ and its maximal ideal, respectively.
In the following, all direct sums and tensor products are completed with respect to $\|\cdot \|.$

Throughout this section, gradings take values in $\Z/N\Z$ for $N$ a non-negative even integer. Let $C$ be a graded $\k$ vector space. We generally assume elements $\alpha \in C$ are homogeneous with respect to the grading. We denote the grading of $\alpha$ by $|\alpha|.$

An \textbf{$A_\infty$ algebra} is a graded complete normed $\Nov$-vector space $C$ together with a collection $\m$ of maps
\[
\m_k : C^{\otimes k} \to C, \qquad k \geq 0,
\]
of degree $2-k$ such that $\|\m_k\| \leq 1$ with strict inequality for $k = 0$, and
\begin{equation}\label{eq:ainf}
\sum_{\substack{k_1+k_2=k+1\\1\le i\le k_1}}(-1)^{\sum_{j=1}^{i-1}(|\alpha_j|+1)}
\m_{k_1}(\alpha_1,\ldots,\alpha_{i-1},\m_{k_2}(\alpha_i,\ldots,\alpha_{i+k_2-1}), \alpha_{i+k_2},\ldots,\alpha_k)=0.
\end{equation}
The $A_\infty$ algebra $(C,\m)$ is called \textbf{flat} if $\m_0 = 0$ and \textbf{minimal} if $\m_1 = 0.$

Let $C,D,$ be graded complete normed vector spaces. An \textbf{$A_\infty$ pre-homomorphism}
\[
\f: C \to D
\]
is a collection of maps
\begin{equation*}
\f_k : C^{\otimes k} \to D, \qquad k \geq 0,
\end{equation*}
of degree $1-k$ such that $\|f_k\| \leq 1$ with strict inequality for $k = 0.$
Let $\f : C\to D$ and $\g : D \to E$ be $A_\infty$ pre-homomorphisms. The composition $\g\circ \f : C \to E$ is given by
\begin{equation}\label{eq:claw}
(\g \circ \f)_k(\alpha_1,\ldots,\alpha_k) = \sum_{\substack{l \geq 0 \\r_1+\cdots+r_l = k}}\g_l(\f_{r_1}(\alpha_1,\ldots,\alpha_{r_1}),\ldots,\f_{r_l}(\alpha_{k-r_l+1},\ldots,\alpha_k)).
\end{equation}

Let $(C,\m^C),(D,\m^D),$ be $A_\infty$ algebras. An $A_\infty$ pre-homomorphism $\f : C \to D$ is called an \textbf{$A_\infty$ homomorphism} if
\begin{multline}\label{eq:aif}
\sum_{\substack{l\\r_1+\cdots+r_l = k}}\m_l^D(\f_{r_1}(\alpha_1,\ldots,\alpha_{r_1}),\ldots,\f_{r_l}(\alpha_{k-r_l+1},\ldots,\alpha_k)) = \\
=\sum_{\substack{k_1+k_2 = k+1\\1\leq i \leq k_1}} (-1)^{\sum_{j=1}^{i-1}(|\alpha_j|+1)} \f_{k_1}(\alpha_1,\ldots,\alpha_{i-1},\m_{k_2}^C(\alpha_i,\ldots,\alpha_{i+k_2-1}),\ldots,\alpha_k).
\end{multline}
If $(C,\m^C),(D,\m^D),(E,\m^E),$ are $A_\infty$ algebras, and $\f : C \to D$ and $\g : D \to E$ are $A_\infty$~homomorphisms, one easily verifies that the composition $\g\circ \f$ is again an $A_\infty$ homomorphism.

An $A_\infty$ pre-homomorphism $\f$ is called \textbf{strict} if $\f_k = 0$ for $k \neq 1.$ For example, the identity $A_\infty$ homomorphism $\mathfrak{id}: C \to C$ is given by $\mathfrak{id}_1 = \Id$ and $\mathfrak{id}_k  = 0$ for $k \neq 1.$ Composition with $\mathfrak{id}$ behaves as expected.

\subsection{Formal diffeomorphisms}
Let $C$ be a graded complete normed $\Nov$ vector space. An $A_\infty$ pre-homomorphism $\f : C \to C$
is called a \textbf{formal diffeomorphism} if $\f_1$ is invertible and $\f_0 = 0.$ The composition law~\eqref{eq:claw} makes the collection of formal diffeomorphisms of $C$ into a group. Indeed, inductively solving for $\g$ in the equation $\g \circ \f = \mathfrak{id}$ gives
\[
\g_1 = \f_1^{-1}
\]
and for $k \geq 2,$
\[
\g_k(\alpha_1,\ldots,\alpha_k) = -\sum_{\substack{1 \leq l < k \\r_1+\cdots+r_l = k}}\g_l(\f_{r_1}(\f_1^{-1}(\alpha_1)),\ldots,\f_1^{-1}(\alpha_{r_1})),\ldots,\f_{r_l}(\f_1^{-1}(\alpha_{k-r_l+1}),\ldots,\f_1^{-1}(\alpha_k))).
\]

The group of formal diffeomorphisms of $C$ acts on the set of $A_\infty$ structures on $C$ by pull-back. Given an $A_\infty$ structure $\m$ on $C,$ and a formal diffeomorphism $\f,$ the pull-back $A_\infty$ structure $\f^*\m$ is uniquely determined by the requirement that $\f$ be an $A_\infty$ homomorphism from $(C,\f^*\m)$ to $(C,\m).$ Indeed, solving the defining equation~\eqref{eq:aif} recursively gives
\begin{align}\label{eq:pullb}
(\f^*&\m)_k(\alpha_1,\ldots,\alpha_k)= \\
&= \f_1^{-1}\left[ \sum_{\substack{l\\r_1+\cdots+r_l = k}}\m_l(\f_{r_1}(\alpha_1,\ldots,\alpha_{r_1}),\ldots,\f_{r_l}(\alpha_{k-r_l+1},\ldots,\alpha_k))
- \right.  \notag\\
&\quad\left. -\sum_{\substack{k_1+k_2 = k+1\\ 1\leq i \leq k_1, \; k_2 < k}} (-1)^{\sum_{j=1}^{i-1}(|\alpha_j|+1)} \f_{k_1}(\alpha_1,\ldots,\alpha_{i-1},(\f^*\m)_{k_2}(\alpha_i,\ldots,\alpha_{i+k_2-1}),\ldots,\alpha_k)
\right]. \notag
\end{align}
\begin{lm}\label{lm:pbflm}
If $\m$ is a flat (resp. flat minimal) $A_\infty$ structure on $C$ and $\f$ is a formal diffeomorphism of $C,$ then $\f^*\m$ is also a flat (resp. flat minimal) $A_\infty$ structure.
\end{lm}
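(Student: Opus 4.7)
The plan is to read off both conclusions directly from the explicit recursion \eqref{eq:pullb}, exploiting the two vanishing hypotheses $\f_0 = 0$ (which is part of the definition of a formal diffeomorphism) and $\m_0 = 0$ (resp.\ also $\m_1 = 0$). No induction or auxiliary construction is required: the fact that $\f^*\m$ already satisfies the $A_\infty$ relations is established in the discussion preceding the lemma, so what remains is only to check the degree $0$ component and, in the minimal case, the degree $1$ component.

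For flatness, I would specialize \eqref{eq:pullb} to $k = 0$. The first sum runs over $l \geq 0$ with $r_1 + \cdots + r_l = 0$, forcing every $r_i$ to vanish; the $l=0$ term is $\m_0$, which is zero by flatness of $\m$, and every $l \geq 1$ term is of the form $\m_l(\f_0,\ldots,\f_0)$, which vanishes by multilinearity because $\f_0 = 0$. The correction sum is indexed by $k_1 + k_2 = 1$, $1 \leq i \leq k_1$, and $k_2 < k = 0$; the last inequality is never satisfied, so this sum is empty. Hence $(\f^*\m)_0 = \f_1^{-1}(0) = 0$.

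For the flat minimal case, I would then specialize \eqref{eq:pullb} to $k = 1$. In the first sum, the constraint $r_1 + \cdots + r_l = 1$ forces exactly one $r_j$ to equal $1$ with the rest equal to $0$; any term with some $r_i = 0$ contains a factor $\f_0 = 0$ and so vanishes, while the only surviving term corresponds to $l = 1, r_1 = 1$ and equals $\m_1(\f_1(\alpha_1))$, which is zero by minimality of $\m$. In the correction sum, $k_1 + k_2 = 2$ together with $k_2 < 1$ forces $k_1 = 2, k_2 = 0$, and the corresponding terms involve $(\f^*\m)_0$, which was just shown to vanish. Therefore $(\f^*\m)_1 = 0$.

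I do not expect any serious obstacle: the proof is pure bookkeeping of which index patterns in \eqref{eq:pullb} survive under the standing vanishing hypotheses. The only point worth noting is the order of the two steps: the minimal case must be treated after the flat case, because the $k=1$ computation relies on the conclusion $(\f^*\m)_0 = 0$ obtained in the flat step.
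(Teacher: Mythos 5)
Your proposal is correct and takes essentially the same approach as the paper, which simply asserts that the lemma "follows immediately from \eqref{eq:pullb}"; you have written out exactly the bookkeeping that makes that one-liner legitimate, including the correct observation that the $k=1$ case must follow the $k=0$ case because the correction sum for $k=1$ involves $(\f^*\m)_0$.
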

\begin{proof}
This follows immediately from~\eqref{eq:pullb}.
\end{proof}

\subsection{Opposites and duality}
Let $C$ be a graded vector space and let $\alpha = (\alpha_1,\ldots,\alpha_k)$ be a list of homogeneous elements $\alpha_j \in C.$ Let $\sigma$ be a permutation of the set $\{1,\ldots,k\}.$ We use the notation
\begin{equation}\label{eq:spe}
\spe{\sigma}{\alpha} = \sum_{\substack{i<j\\\sigma(i)>\sigma(j)}} (|\alpha_{\sigma(i)}|+1)(|\alpha_{\sigma(j)}|+1).
\end{equation}
For the rest of the paper, we fix $\tau$ to be the permutation $(1,\ldots,k) \mapsto (k,k-1,\ldots,1).$

Given an $A_\infty$ algebra $(C,\m),$ the \textbf{opposite $A_\infty$ algebra} structure $\m^{op}$ on $C$ is defined by
\[
\m_k^{op}(\alpha_1,\ldots,\alpha_k) = (-1)^{\spe{\tau}{\alpha} + k + 1} \m_k(\alpha_k,\ldots,\alpha_1).
\]
A straightforward calculation shows that $\m_k^{op}$ is indeed an $A_\infty$ algebra.

Let $\f : C \to D$ be an $A_\infty$ pre-homomorphism. The \textbf{opposite $A_\infty$ pre-homomorphism} $\f^{op} : C \to D$ is defined by
\[
\f^{op}_k(\alpha_1,\ldots,\alpha_k) = (-1)^{\spe{\tau}{\alpha} + k + 1} \f_k(\alpha_k,\ldots,\alpha_1).
\]
If $\f : C \to D$ and $\g : D \to E$ are $A_\infty$ pre-homomorphisms, one checks that
\[
\g^{op} \circ \f^{op} = (\g \circ \f)^{op}.
\]
If $(C,\m^C),(D,\m^D),$ are $A_\infty$ algebras, and $\f : (C,\m^C) \to (D,\m^D)$ is an $A_\infty$ homomorphism, one checks that $\f^{op} : (C,\m^{op}) \to (D, \m^{op})$ is also an $A_\infty$ homomorphism.

Let $C$ be a graded complete normed $\Nov$ vector space and let $\c : C \to C$ be an involution, that is, a linear map of degree zero such that $\c^2 = \Id.$ In particular, $\c$ is a strict $A_\infty$ pre-homomorphism. An $A_\infty$ structure $\m$ on $C$ is called \textbf{$\c$~self-dual} if $\c:(C,\m^{op}) \to (C,\m)$ is an $A_\infty$ homomorphism.

Let $(C,\c_C)$ and $(D,\c_D)$ be graded complete normed $\Nov$ vector spaces with involutions. An $A_\infty$ pre-homomorphism $\f : C \to D$ is called \textbf{$\c_C,\c_D$~self-dual} if
\[
\f \circ \c_C = \c_D \circ \f^{op}.
\]
If $C = D$ and $\c_C = \c_D = \c,$ we say simply that $\f$ is $\c$~self-dual.
Let $(E,\c_E)$ be another graded complete normed vector space with involution. Let $\g : D \to E$ be an $A_\infty$ pre-homomorphism. One sees immediately that if $\f$ is $\c_C,\c_D$~self-dual and $\g$ is $\c_D,\c_E,$~self-dual, then $\g\circ \f$ is $\c_C,\c_E,$~self-dual.
\begin{lm}\label{lm:pbsd}
If $\f : C \to C$ is a $\c$ self-dual formal diffeomorphism and $\m$ is a $\c$ self-dual $A_\infty$ structure on $C,$ then $\f^*\m$ is also $\c$ self-dual.
\end{lm}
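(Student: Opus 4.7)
The plan is to chain together $A_\infty$ homomorphisms, exploiting the definition of the pullback structure, the fact that opposites preserve the $A_\infty$ homomorphism property, and the invertibility of $\f$ as a formal diffeomorphism.

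First I would unpack what needs to be shown. By definition, $\f^*\m$ is $\c$ self-dual means that $\c: (C,(\f^*\m)^{op}) \to (C,\f^*\m)$ is an $A_\infty$ homomorphism. So we need to produce this homomorphism property from the hypotheses that $\c:(C,\m^{op})\to(C,\m)$ is an $A_\infty$ homomorphism and that $\f\circ\c = \c\circ\f^{op}$ as $A_\infty$ pre-homomorphisms.

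Next I would assemble the following chain. By the defining property of $\f^*\m,$ the map $\f:(C,\f^*\m)\to(C,\m)$ is an $A_\infty$ homomorphism. Applying the observation recorded earlier in the subsection that taking opposites preserves the $A_\infty$ homomorphism property, we obtain that $\f^{op}:(C,(\f^*\m)^{op})\to(C,\m^{op})$ is an $A_\infty$ homomorphism. Composing with the given $\c:(C,\m^{op})\to(C,\m)$ yields that
\[
\c\circ\f^{op}:(C,(\f^*\m)^{op})\longrightarrow(C,\m)
\]
is an $A_\infty$ homomorphism. By the $\c$ self-duality of $\f,$ this pre-homomorphism equals $\f\circ\c,$ so $\f\circ\c:(C,(\f^*\m)^{op})\to(C,\m)$ is an $A_\infty$ homomorphism as well.

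Finally, to strip off the $\f$ on the left I would invoke invertibility in the group of formal diffeomorphisms. Since $\f:(C,\f^*\m)\to(C,\m)$ is an $A_\infty$ homomorphism and $\f$ is invertible as a formal diffeomorphism, its inverse $\f^{-1}:(C,\m)\to(C,\f^*\m)$ is again an $A_\infty$ homomorphism: indeed $(\f^{-1})^*(\f^*\m) = \m$ by uniqueness of pullback structures under composition. Composing the previous paragraph's result with $\f^{-1}$ on the left gives
\[
\f^{-1}\circ(\f\circ\c) = (\f^{-1}\circ\f)\circ\c = \mathfrak{id}\circ\c = \c,
\]
and this composition is an $A_\infty$ homomorphism from $(C,(\f^*\m)^{op})$ to $(C,\f^*\m).$ This is exactly the definition of $\c$ self-duality for $\f^*\m.$ The argument is essentially a diagram chase; the only mildly subtle point is the invertibility step, which is the one I would write out most carefully.
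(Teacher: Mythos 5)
Your proof is correct and takes essentially the same route as the paper: both rely on the defining property of the pullback structure, the fact that passing to opposites preserves the $A_\infty$ homomorphism property, and the $\c$ self-duality of $\f$ and $\m.$ The only cosmetic difference is that the paper encodes $\c$ self-duality of $\m$ as $\c^*\m = \m^{op}$ and concludes via a one-line chain of pullback identities $\c^*(\f^*\m) = (\f\circ\c)^*\m = \cdots = (\f^*\m)^{op},$ thereby avoiding any appeal to invertibility, whereas you strip off the outer $\f$ by composing with $\f^{-1}.$
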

\begin{proof}
Consider $\c$ as a formal diffeomorphism of $C.$ Then, the $\c$ self-duality of $\m$ is equivalent to $\c^* \m = \m^{op}.$ Using the $\c$ self-duality of $\f$ and $\m,$ we calculate
\[
\c^*(\f^*\m) = (\f \circ \c)^*\m = (\c \circ \f^{op})^*\m = (\f^{op})^*(\c^*\m) = (\f^{op})^*\m^{op} = (\f^*\m)^{op}.
\]
\end{proof}

\subsection{\texorpdfstring{$\k$}{k} structures and \texorpdfstring{$G$}{G} gapping}\label{ssec:gap}
A \textbf{$\k$-structure} on a graded complete normed $\Nov$ vector space $C$ is an isomorphism $C \simeq \overline C \otimes \Nov$ where $\overline C$ is a graded $\k$ vector space equipped with the trivial norm.
Let $G \subset \R_{\geq 0}$ be a monoid with respect to addition such that for $E \in \R$ the set of $x \in G$ with $x \leq E$ is finite. A \textbf{$G$ gapped} $A_\infty$ algebra is an $A_\infty$ algebra $(C,\m)$ along with a $\k$ structure on $C,$ such that there exist operations
\[
\m_{k,\beta} : \overline C^{\otimes k} \to \overline C, \qquad \beta \in G,
\]
with
\[
\m_k = \sum_{\beta \in G} T^{\beta} \m_{k,\beta}\otimes \Id_\Nov.
\]
Expanding equation~\eqref{eq:ainf} gives
\begin{equation}\label{eq:Ggainf}
\sum_{\substack{k_1+k_2=k+1\\ \beta_1 + \beta_2 = \beta\\ 1\le i\le k_1}}(-1)^{\sum_{j=1}^{i-1}(|\alpha_j|+1)}
\m_{k_1,\beta_1}(\alpha_1,\ldots,\alpha_{i-1},\m_{k_2,\beta_2}(\alpha_i,\ldots,\alpha_{i+k_2-1}), \alpha_{i+k_2},\ldots,\alpha_k)=0.
\end{equation}
Since $\|\m_0\| < 1,$ it follows that $\m_{1,0}^2 = 0.$ So, $\m_{1,0}$ makes $\overline C$ a complex. A $G$ gapped $A_\infty$ algebra is \textbf{weakly minimal} if $\m_{1,0} = 0.$

Let $C,D,$ be $\k$ structured $\Nov$ vector spaces. An $A_\infty$ pre-homomorphism $\f : C \to D$ is called \textbf{$G$ gapped} if there exist operations
\[
f_{k,\beta} : \overline C^{\otimes k} \to \overline D, \qquad \beta \in G,
\]
such that
\[
\f_k = \sum_{\beta \in G} T^\beta \f_{k,\beta}\otimes \Id_\Nov.
\]
Let $\f : C\to D$ and $\g : D \to E$ be $G$ gapped $A_\infty$ pre-homomorphisms. Then the composition $\g\circ \f : C \to E$ is also $G$ gapped, and expanding equation~\eqref{eq:claw} gives
\begin{equation}\label{eq:Ggclaw}
(\g \circ \f)_{k,\beta}(\alpha_1,\ldots,\alpha_k) = \sum_{\substack{l \geq 0 \\r_1+\cdots+r_l = k\\\beta_0 + \cdots+\beta_l=\beta}}\g_{l,\beta_0}(\f_{r_1,\beta_1}(\alpha_1,\ldots,\alpha_{r_1}),\ldots,\f_{r_l,\beta_l}(\alpha_{k-r_l+1},\ldots,\alpha_k)).
\end{equation}

If $\f$ is an $A_\infty$ homomorphism, expanding~\eqref{eq:aif} gives
\begin{multline*}
\sum_{\substack{l\\r_1+\cdots+r_l = k \\\beta_0 + \cdots + \beta_l = \beta}}\m_{l,\beta_0}^D(\f_{r_1,\beta_1}(\alpha_1,\ldots,\alpha_{r_1}),\ldots,\f_{r_l,\beta_l}(\alpha_{k-r_l+1},\ldots,\alpha_k)) = \\
=\sum_{\substack{k_1+k_2 = k+1\\\beta_1 + \beta_2 = \beta \\ 1\leq i \leq k_1}} (-1)^{\sum_{j=1}^{i-1}(|\alpha_j|+1)} \f_{k_1,\beta_1}(\alpha_1,\ldots,\alpha_{i-1},\m_{k_2,\beta_2}^C(\alpha_i,\ldots,\alpha_{i+k_2-1}),\ldots,\alpha_k).
\end{multline*}
Since $\|\f_0\| < 1,$ it follows that
\begin{equation}\label{eq:f10cm}
\f_{1,0}\circ \m^C_{1,0} = \m^D_{1,0} \circ \f_{1,0}.
\end{equation}
That is, $\f_{1,0}$ is a map of complexes.

If $\m$ is a $G$ gapped $A_\infty$ structure on $C,$ and $\f : C \to C$ is a $G$ gapped formal diffeomorphism, then $\f^*\m$ is also $G$ gapped. In fact, writing
\[
\f_1^{-1} = \sum_{\beta \in G} T^\beta (\f_1^{-1})_\beta \otimes \Id_\Nov,
\]
we can expand~\eqref{eq:pullb} to obtain
\begin{align}\label{eq:Ggpullb}
&(\f^*\m)_{k,\beta}(\alpha_1,\ldots,\alpha_k) = \\
&=\!\!\!\sum_{\substack{l\\r_1+\cdots+r_l = k\\ \beta_{-1} + \cdots + \beta_l = \beta}} \!\!\!(\f_{1}^{-1})_{\beta_{-1}}(\m_{l,\beta_0}(\f_{r_1,\beta_1}(\alpha_1,\ldots,\alpha_{r_1}),\ldots,\f_{r_l,\beta_l}(\alpha_{k-r_l+1},\ldots,\alpha_k)))
-   \notag\\
& -\!\!\!\!\!\sum_{\substack{k_1+k_2 = k+1\\ \beta_0+ \beta_1 + \beta_2 = \beta \\ 1\leq i \leq k_1, \; k_2 < k}} \!\!\!\!\!(-1)^{\sum_{j=1}^{i-1}(|\alpha_j|+1)} (\f_1^{-1})_{\beta_0}(\f_{k_1,\beta_1}(\alpha_1,\ldots,\alpha_{i-1},(\f^*\m)_{k_2,\beta_2}(\alpha_i,\ldots,\alpha_{i+k_2-1}),\ldots,\alpha_k))
.\notag
\end{align}
Moreover, setting
\begin{equation}\label{eq:kf}
\kappa_\f = \min\{\beta \in G| \beta > 0,\, \f_{1,\beta} \neq 0\},
\end{equation}
we have
\begin{equation}\label{eq:f1-1}
(\f_1^{-1})_0 = \f_{1,0}^{-1}, \qquad \kappa_{\f^{-1}} = \kappa_{\f},\qquad (\f_1^{-1})_{1,\kappa_\f} = - \f_{1,0}^{-1} \circ \f_{1,\kappa_\f} \circ \f_{1,0}^{-1}.
\end{equation}

Let $C$ be a $\k$ structured $\Nov$ vector space. A linear map $\c : C \to C$ is called \textbf{$\k$ structured} if there exists $\bar \c : \overline C \to \overline C$ such that $\c = \bc \otimes \Id_\Nov.$ The following lemmas are immediate.
\begin{lm}\label{lm:psdm}
Let $\m$ be a $G$ gapped $A_\infty$ structure on $C$ and let $\c$ be a $\k$ structured involution. Then $\m$ is $\c$ self-dual if and only if
\begin{equation*}
\m_{k,\beta}(\bc(\alpha_1),\ldots,\bc(\alpha_k)) = (-1)^{\spe{\tau}{\alpha} + k+1}\bc(\m_{k,\beta}(\alpha_k,\ldots,\alpha_1))
\end{equation*}
for all $k \geq 0,\beta\in G,$ and $\alpha_1,\ldots,\alpha_k \in \overline C.$
\end{lm}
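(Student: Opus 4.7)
The plan is to directly unpack the definitions and observe that the $G$-gapped, $\k$-structured decompositions make the stated condition equivalent on the nose, with no remaining choices.

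First I would expand the condition that $\c : (C, \m^{op}) \to (C, \m)$ is an $A_\infty$ homomorphism, namely equation~\eqref{eq:aif}. Since $\c$ is strict of degree zero, the compositions $(\m \circ \c^{\otimes \cdot})$ and $(\c \circ \m^{op})$ that appear in~\eqref{eq:claw} collapse: in the sum on the left-hand side of~\eqref{eq:aif}, only the term with $l = k$ and all $r_j = 1$ contributes, giving $\m_k(\c(\alpha_1),\ldots,\c(\alpha_k))$; in the sum on the right-hand side only $k_1 = 1$, $k_2 = k$, $i = 1$ contributes, giving $\c(\m^{op}_k(\alpha_1,\ldots,\alpha_k))$. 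Substituting the definition of $\m^{op}$, the self-duality condition is therefore equivalent to
\[
\m_k(\c(\alpha_1),\ldots,\c(\alpha_k)) = (-1)^{\spe{\tau}{\alpha}+k+1}\c(\m_k(\alpha_k,\ldots,\alpha_1))
\]
for all $k \geq 0$ and all homogeneous $\alpha_1,\ldots,\alpha_k \in C$.

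Next I would insert the $G$-gapped expansion $\m_k = \sum_{\beta \in G} T^\beta \m_{k,\beta}\otimes \Id_\Nov$ and the $\k$-structured form $\c = \bc \otimes \Id_\Nov$. Because $T^\beta$ commutes with $\m_{k,\beta}$ and $\bc$ (they are all extended $\Nov$-linearly from operations on $\overline C$), both sides of the displayed equation split as sums $\sum_\beta T^\beta(\cdots)$. Matching $T^\beta$ coefficients and restricting inputs to $\overline C$ recovers exactly the stated identity
\[
\m_{k,\beta}(\bc(\alpha_1),\ldots,\bc(\alpha_k)) = (-1)^{\spe{\tau}{\alpha}+k+1}\bc(\m_{k,\beta}(\alpha_k,\ldots,\alpha_1)).
\]
Conversely, if this identity holds on $\overline C$ for every $k$ and $\beta$, the equation on $C$ follows by $\Nov$-multilinearity of $\m_k$ and $\c$, and reassembling the sum over $\beta$ gives back the collapsed form of~\eqref{eq:aif}.

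There is no real obstacle here: everything is formal manipulation of definitions. The only points requiring care are remembering that $\c$ being strict is what makes~\eqref{eq:aif} collapse to a single-term identity on each side, and that $\c$ has degree zero so no additional Koszul signs appear when one passes it past the $\m_{k,\beta}$. These observations are already built into the setup, so the proof is essentially a one-line unwinding once the definition of self-duality is combined with the $G$-gapping.
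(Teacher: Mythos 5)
Your proof is correct and is precisely the direct unwinding of the definitions that the paper labels ``immediate'' and does not spell out: the strictness of $\c$ collapses \eqref{eq:aif} to the single-term identity $\m_k(\c(\alpha_1),\ldots,\c(\alpha_k)) = \c(\m_k^{op}(\alpha_1,\ldots,\alpha_k))$, and matching $T^\beta$ coefficients via the $G$-gapped, $\k$-structured decompositions gives the stated identity on $\overline C$. The one minor notational slip is the reference to \eqref{eq:claw} --- the collapse happens in \eqref{eq:aif} itself --- but this does not affect the substance of the argument.
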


\begin{lm}\label{lm:psdf}
Let $C,D,$ be $\k$ structured $\Nov$ vector spaces with $\k$ structured involutions $\c_C,\c_D,$ respectively, and let $\f : C \to D$ be a $G$ gapped $A_\infty$ pre-homomorphism. Then $\f$ is $\c_C,\c_D$ self-dual if and only if
\begin{equation*}
\f_{k,\beta}(\bc_C(\alpha_1),\ldots,\bc_C(\alpha_k)) = (-1)^{\spe{\tau}{\alpha} + k+1}\bc_D(\f_{k,\beta}(\alpha_k,\ldots,\alpha_1))
\end{equation*}
for all $k \geq 0,\beta\in G,$ and $\alpha_1,\ldots,\alpha_k \in \overline C.$
\end{lm}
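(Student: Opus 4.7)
The plan is to unpack the definition of $\c_C,\c_D$ self-duality, which asserts $\f\circ \c_C=\c_D\circ \f^{op}$, by expressing both sides componentwise and then matching coefficients of $T^\beta$.

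First, I would observe that since $\c_C$ and $\c_D$ are strict $A_\infty$ pre-homomorphisms (only their linear part is nonzero), the composition law~\eqref{eq:claw} collapses: on the left,
\[
(\f\circ \c_C)_k(\alpha_1,\ldots,\alpha_k)=\f_k(\c_C(\alpha_1),\ldots,\c_C(\alpha_k)),
\]
and on the right,
\[
(\c_D\circ \f^{op})_k(\alpha_1,\ldots,\alpha_k)=\c_D\bigl(\f^{op}_k(\alpha_1,\ldots,\alpha_k)\bigr)=(-1)^{\spe{\tau}{\alpha}+k+1}\c_D(\f_k(\alpha_k,\ldots,\alpha_1)).
\]
So $\c_C,\c_D$ self-duality is equivalent to the single family of identities
\[
\f_k(\c_C(\alpha_1),\ldots,\c_C(\alpha_k))=(-1)^{\spe{\tau}{\alpha}+k+1}\c_D(\f_k(\alpha_k,\ldots,\alpha_1))
\]
for all $k\geq 0$ and all homogeneous $\alpha_1,\ldots,\alpha_k\in C$.

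Next I would use the $G$-gapping hypotheses. Writing $\f_k=\sum_{\beta\in G}T^\beta \f_{k,\beta}\otimes\Id_\Nov$ and $\c_C=\bc_C\otimes\Id_\Nov$, $\c_D=\bc_D\otimes\Id_\Nov$, and plugging in arguments $\alpha_j\in \overline C\subset C$, both sides of the identity above become power series in $T$ with coefficients in $\overline D$:
\[
\sum_{\beta\in G}T^\beta\,\f_{k,\beta}(\bc_C(\alpha_1),\ldots,\bc_C(\alpha_k))=\sum_{\beta\in G}T^\beta\,(-1)^{\spe{\tau}{\alpha}+k+1}\bc_D(\f_{k,\beta}(\alpha_k,\ldots,\alpha_1)).
\]
Matching coefficients of $T^\beta$ yields exactly the claimed identities indexed by $\beta\in G$. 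The converse direction is obtained by extending the per-$\beta$ identities from $\overline C^{\otimes k}$ to $C^{\otimes k}=(\overline C\otimes\Nov)^{\otimes k}$ by $\Nov$-multilinearity and continuity, then summing over $\beta$ to recover the global self-duality identity.

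There is no real obstacle: the argument is entirely formal unpacking. The only point to be careful about is that $\bc_C$, $\bc_D$ are the genuine $\k$-linear restrictions (not scaled by powers of $T$), so they commute past the $T^\beta$ factors cleanly and the matching of coefficients proceeds without interaction between the sign and the gapping index. This makes the lemma strictly parallel to Lemma~\ref{lm:psdm}, and indeed the same scheme handles both statements.
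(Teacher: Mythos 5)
Your proof is correct and takes the same (indeed the only natural) route: the paper declares Lemma~\ref{lm:psdf} (together with Lemma~\ref{lm:psdm}) to be ``immediate'' and writes no proof, and what you have written is precisely the routine unpacking --- strictness of $\c_C,\c_D$ collapsing the composition formula~\eqref{eq:claw}, then the $G$-gapped expansion and coefficient matching in $T^\beta$, with the converse by $\Nov$-multilinearity --- that the author is leaving to the reader.
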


\subsection{The underlying algebra}\label{ssec:ua}
Let $(C,\m)$ be a $G$ gapped $A_\infty$ algebra. Define a binary operation $\circ : H^*(\overline C,\m_{1,0})^{\otimes 2} \to H^*(\overline C,\m_{1,0})$ by
\begin{equation}\label{eq:m2p}
[\alpha_1] \circ [\alpha_2] = (-1)^{|\alpha_1|}\m_{2,0}(\alpha_1,\alpha_2).
\end{equation}
It follows from the $A_\infty$ relations~\eqref{eq:Ggainf} for $k = 2,3,$ and $\beta = 0,$ together with the inequality $\|\m_0\| < 1,$ that $\circ$ is a well-defined associative product. Moreover, $\circ$ preserves the grading of $H^*(\overline C,\m_{0,1}).$ Abbreviate $A_C = (H^*(\overline C,\m_{0,1}), \circ).$ We call $A_C$ the $\textbf{underlying algebra}$ of~$(C,\m).$ Observe that if $(C,\m)$ is weakly minimal, then $H^*(\overline C,\m_{0,1}) = \overline C.$

Let $A$ be a graded $\k$-algebra with product $\cdot.$ The opposite algebra $A^{op}$ is a copy of the underlying $\k$ vector space of $A$ equipped with the product $\cdot^{op}$ defined by
\begin{equation}\label{eq:opga}
a \cdot^{op} b = (-1)^{|a||b|} b \cdot a.
\end{equation}
Thus, if $A$ is graded commutative, then $\cdot^{op}$ and $\cdot$ coincide.

\begin{lm}\label{lm:chom}
Suppose $\c$ is a $\k$ structured involution of $C$ such that $\m$ is $\c$ self-dual. Then $\bc$ induces a homomorphism $A_C \to A_C^{op}.$
\end{lm}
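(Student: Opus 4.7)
The plan is to show first that $\bc$ descends to a well-defined graded linear map on $A_C = H^*(\overline{C},\m_{1,0})$, and then to verify directly that the descended map intertwines the product $\circ$ with its opposite $\circ^{op}$.

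For the first step, I would apply Lemma~\ref{lm:psdm} with $k=1$ and $\beta = 0$. Since the only inversion sign $\spe{\tau}{\alpha}$ vanishes when $k=1$, the lemma gives $\m_{1,0}\circ \bc = \bc\circ \m_{1,0}$ (the $k+1 = 2$ in the exponent contributes trivially). Combined with the fact that $\bc$ has degree $0$, this means $\bc$ preserves the complex $(\overline C,\m_{1,0})$ and hence induces a graded linear endomorphism of $A_C$, which I will denote again by $\bc$.

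The main computation is the second step. I would apply Lemma~\ref{lm:psdm} with $k=2$ and $\beta=0$ to the pair $(\alpha_2,\alpha_1)$. The permutation $\tau$ on two letters has a single inversion, so $\spe{\tau}{(\alpha_2,\alpha_1)} = (|\alpha_1|+1)(|\alpha_2|+1)$, yielding
\[
\m_{2,0}(\bc(\alpha_2),\bc(\alpha_1)) = -(-1)^{(|\alpha_1|+1)(|\alpha_2|+1)}\,\bc(\m_{2,0}(\alpha_1,\alpha_2)).
\]
Plugging this into the definition~\eqref{eq:m2p} and the opposite product~\eqref{eq:opga} gives
\[
\bc([\alpha_1])\circ^{op}\bc([\alpha_2]) = (-1)^{|\alpha_1||\alpha_2|+|\alpha_2|}\,[\m_{2,0}(\bc(\alpha_2),\bc(\alpha_1))],
\]
and after substituting the previous identity and simplifying the signs (using $2|\alpha_1||\alpha_2|, 2|\alpha_2| \equiv 0 \pmod 2$), the total exponent collapses to $|\alpha_1|$. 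This matches
\[
\bc([\alpha_1]\circ[\alpha_2]) = (-1)^{|\alpha_1|}\,\bc([\m_{2,0}(\alpha_1,\alpha_2)]),
\]
proving the homomorphism property.

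The proof has no real obstacle beyond bookkeeping: everything is forced by the $k=1,2$ cases of Lemma~\ref{lm:psdm}. The only point requiring care is the sign reconciliation, where the Koszul sign $(-1)^{|\alpha_1||\alpha_2|}$ in the definition of $\circ^{op}$ and the sign $(-1)^{|\alpha_1|}$ built into~\eqref{eq:m2p} must precisely cancel against the quadratic sign $(-1)^{(|\alpha_1|+1)(|\alpha_2|+1)}$ coming from $\spe{\tau}{\alpha}$ together with the factor $(-1)^{k+1}=-1$ from self-duality; this is the step I would write out carefully.
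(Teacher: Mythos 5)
Your proof is correct and follows exactly the route the paper indicates — the paper's proof is the single sentence ``This follows from Lemma~\ref{lm:psdm}.'', and your write-up supplies precisely the $k=1$ and $k=2$ specializations of that lemma together with the sign bookkeeping needed to match \eqref{eq:m2p} and \eqref{eq:opga}. The sign computation checks out: the total exponent $|\alpha_1||\alpha_2|+|\alpha_2|+1+(|\alpha_1|+1)(|\alpha_2|+1)$ does reduce to $|\alpha_1|$ modulo $2$, and the $k=1$ case gives $\m_{1,0}\circ\bc=\bc\circ\m_{1,0}$, so $\bc$ descends to cohomology as needed.
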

\begin{proof}
This follows from Lemma~\ref{lm:psdm}.
\end{proof}

\subsection{Quasi-isomorphism and homotopy equivalence}\label{ssec:qihe}
Let $(C,\m^C)$ and $(D,\m^D)$ be $G$ gapped $A_\infty$ algebras and let $\f : C \to D$ be a $G$ gapped $A_\infty$ homomorphism. By equation~\eqref{eq:f10cm}, $\f_{1,0} : \overline C \to \overline D$ is a map of complexes with respect to the differentials $\m_{1,0}^C,\m_{1,0}^D.$ We say that $\f$ is a \textbf{quasi-isomorphism} if the induced map
\[
(\f_{1,0})_* : H^*(\overline C,\m_{1,0}^C) \to H^*(\overline D,\m_{1,0}^D)
\]
is an isomorphism. Theorem 4.2.45 of~\cite{FO09} shows that quasi-isomorphism implies the a priori stronger notion of homotopy equivalence defined in~\cite[Section 4.2]{FO09}.

\subsection{Bounding cochains}\label{ssec:bc}
A \textbf{bounding cochain} for an $A_\infty$ algebra $(C,\m)$ is an element $b \in C^1$ such that $\|b\| < 1$ and
\begin{equation}\label{eq:mc}
\sum_{k \geq 0} \m_k(b^{\otimes k}) = 0.
\end{equation}
If there exists at least one bounding cochain, we say that $(C,\m)$ is \textbf{unobstructed}. Two bounding cochains $b_0,b_1,$ are \textbf{gauge-equivalent} if there exists $c \in C^0$ such that
\[
b_1 - b_0 = \sum_{k_0 \geq 0, k_1 \geq 0}\m_{k_0 + k_1 + 1}(b_0^{\otimes k_0} \otimes c \otimes b_1^{\otimes k_1}).
\]
Corollary 4.3.14 of~\cite{FO09} shows the moduli space of bounding cochains modulo gauge equivalence is invariant under homotopy equivalence of $A_\infty$ algebras. Thus, it is also invariant under quasi-isomorphism.

Given an $A_\infty$ algebra $(C,\m),$ any $b \in C^1$ with $\| b \| < 1$ gives rise to a deformed $A_\infty$ structure $\m^b$ on $C$ given by
\[
\m_k^b(\alpha_1,\ldots,\alpha_k) = \sum_{l_0,\ldots,l_k\geq 0} \m_{k + \sum l_j}(b^{\otimes l_0}\otimes\alpha_1\otimes b^{\otimes l_1}\otimes \cdots \otimes b^{\otimes l_{k-1}}\otimes \alpha_k \otimes b^{\otimes l_k}).
\]
If $b$ is a bounding cochain, then $\m^b$ is flat. Indeed, equation~\eqref{eq:mc} is equivalent to $\m^b_0 = 0.$ In particular, it follows from the $A_\infty$ relation~\eqref{eq:ainf} for $\m^b$ with $k = 1$ that $\m^b_1 \circ \m^b_1 = 0.$ So, $C$ is a complex with respect to $\m^b_1.$

\subsection{Truncating modulo \texorpdfstring{$T^E$}{E}}
For $E>0$, we define modulo $T^E$ versions of the notions of this section just as above except that the  defining equations only hold modulo $T^E$. For instance, an $A_\infty$ algebra modulo $T^E$ is defined as above except that equation~\eqref{eq:ainf} holds only modulo $T^E$, and an $A_\infty$ homomorphism modulo $T^E$ is defined as above except that equation~\eqref{eq:aif} holds only modulo $T^E$. Let $\f : (C,\m^C) \to (D, \m^D)$ be a $G$ gapped $A_\infty$ homomorphism modulo $T^E.$ Just as in Section~\ref{ssec:qihe}, we say $\f$ is a quasi-isomorphism modulo $T^E$ if the induced map $(\f_{1,0})_* : H^*(\overline C,\m^C_{1,0}) \to H^*(\overline D,\m^D_{1,0})$ is an isomorphism. Quasi-isomorphism modulo $T^E$ implies the a priori stronger notion of homotopy equivalence modulo $T^E$ as explained in~\cite[Section 7.2.6]{FO09}.

\section{Cauchy-Riemann \texorpdfstring{$Pin$}{Pin} boundary value problems}\label{sec:crpbvp}
We recall here a number of definitions and results from~\cite{So06}. See also~\cite[Chapter 8]{FO09} and~\cite{FO17a}.

\subsection{Definition}
In the following, $Pin$ denotes either of the Lie groups $Pin^+$ or $Pin^-.$ Let $V \to B$ be a vector bundle of rank $r$ equipped with a metric. The associated \textbf{orthonormal frame bundle} $\F(V)$ is the principle $O(r)$ bundle with fiber at $x \in B$ given by the space of orthonormal frames of $V_x.$ A \textbf{$Pin$ structure} on $V$ consists of a principal $Pin$ bundle $P \to B$ and a $Pin - O(r)$ equivariant bundle map $p: P \to \F(V).$ Let $V,P,p,$ and $V',P',p'$ be vector bundles with $Pin$ structure. An isomorphism of vector bundles $\phi: V \to V'$ is said to \textbf{preserve} the $Pin$ structures if there exists a lifting $\tilde \phi$ of the induced map $\F(\phi) : \F(V) \to \F(V')$ such that the following diagram commutes:
\[
\xymatrix{
P \ar[d]^p\ar[r]^{\tilde \phi} & P' \ar[d]^{p'} \\
\F(V) \ar[r]^{\F(\phi)} & \F(V').
}
\]
In the following, given a vector bundle over a manifold $V \to M,$ we denote by $\Gamma(V)$ an appropriate Banach space completion of the smooth sections of $V.$
\begin{df}
A \textbf{Cauchy-Riemann $Pin$ boundary value problem} is a quintuple $\uD = (\Sigma,E,F,\p,D)$ where
\begin{itemize}
\item
$\Sigma$ is a Riemann surface.
\item
$E \to \Sigma$ is a complex vector bundle.
\item
$F \to \partial \Sigma$ is a totally real sub-bundle of $E|_{\partial \Sigma}$ with an orientation over every component of $\partial \Sigma$ where it is orientable. We call $F$ a \textbf{boundary condition.}
\item
$\p$ is a $Pin$ structure on $F.$
\item
$D : \Gamma((\Sigma,\partial\Sigma),(E,F)) \to \Gamma(\Sigma,\Omega^{0,1}(E))$ is a linear partial differential operator satisfying, for $\xi \in \Gamma((\Sigma,\partial\Sigma),(E,F))$ and $f \in C^{\infty}(\Sigma,\R),$
\[
D(f\xi) = f D\xi + (\bar \partial f) \xi.
\]
Such a $D$ is called a \textbf{real linear Cauchy-Riemann operator}.
\end{itemize}
An \textbf{isomorphism} of Cauchy-Riemann $Pin$ boundary value problems $\uchi : \uD \to \uD'$ is a pair $(\hat \chi, \chi)$ where $\hat \chi : \Sigma \to \Sigma'$ is a biholomorphism and $\chi : E \to E'$ is an isomorphism covering $\hat \chi$ satisfying the following conditions:
\begin{itemize}
\item
The map $\chi|_{\partial \Sigma}$ carries $F$ to $F'$ preserving $Pin$ structures and preserving orientation where applicable.
\item
For $\xi \in \Gamma((\Sigma,\partial\Sigma),(E,F)),$ we have $D'(\chi(\xi)) = ((\hat\chi^{-1})^*\otimes\chi)(D(\xi)).$
\end{itemize}
\end{df}

\subsection{Canonical orientation}
The \textbf{determinant line} of a Fredholm operator $D$ is the one-dimensional vector space
\[
\det(D) = \Lambda^{\max}(\ker D) \otimes \Lambda^{\max}(\coker D).
\]
The determinant lines of a continuously varying family of Fredholm operators fit together to form a determinant line bundle~\cite[Appendix A]{MS12}.
It is well-known that real linear Cauchy-Riemann operators are Fredholm~\cite[Appendix C]{MS12}. The following restates Proposition 2.8 and Lemma 2.9 of \cite{So06}.
\begin{pr}\label{pr:cobvp}
The determinant of the real linear Cauchy-Riemann operator of a Cauchy-Riemann $Pin$ boundary value problem carries a canonical orientation, which has the following properties:
\begin{enumerate}
\item
The orientation varies continuously in families and thus defines an orientation of the associated determinant line bundle.
\item\label{it:ro}
Reversing the orientation of the boundary condition over one component of the boundary reverses the canonical orientation of the determinant line.
\item\label{it:iso}
Given an isomorphism of Cauchy-Riemann $Pin$ boundary value problems
\[
\uchi : \uD \to \uD',
\]
the induced isomorphism $\det(\uchi): \det(D) \to \det(D')$ preserves the canonical orientation.
\end{enumerate}
\end{pr}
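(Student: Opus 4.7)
The plan is to construct the canonical orientation in three stages: reduce to a standard disk model via continuous deformation and a pinching argument, orient the disk model explicitly using the $Pin$ structure, and then verify properties (a)--(c) from the construction.

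Stage one: Fredholm continuity and topological reduction. Since the real linear Cauchy-Riemann operators are Fredholm (cited from~\cite{MS12}), the determinant lines $\det(D)$ vary continuously in the data and assemble into a determinant line bundle over the space of Cauchy-Riemann $Pin$ boundary value problems. Because orientations are discrete, it suffices to define an orientation on one representative in each connected component and check that the local prescription glues continuously. The space of real linear Cauchy-Riemann operators on a fixed $(E,F)$ is affine, hence contractible, so only the data $(\Sigma,E,F,\p)$ matters. The space of such quadruples with fixed discrete invariants (genus of $\Sigma$, number of boundary components, rank of $E$, Maslov index of $F$, isomorphism class of $\p$) can be shown path-connected by a standard classification argument for $Pin$-structured totally real boundary conditions.

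Stage two: pinching to disks. Given a representative, I would choose a maximal system of disjointly embedded properly embedded arcs cutting $\Sigma$ into a disjoint union of disks $D_1 \sqcup \dots \sqcup D_m$. The linear gluing theorem for Cauchy-Riemann operators yields a canonical isomorphism
\[
\det(D) \;\cong\; \bigotimes_{j=1}^m \det(D_j),
\]
where $D_j$ is the induced operator on $D_j$, and $\p$ restricts to a $Pin$ structure $\p_j$ on $F_j := F|_{\partial D_j}$. On each disk, trivialize $E|_{D_j}$ and view $F_j$ as a loop of totally real subspaces of $\C^r$. After homotopy through totally real boundary conditions, split $F_j$ as a direct sum of one-dimensional totally real line bundles along $\partial D_j$, each isomorphic to either the trivial or the M\"obius line bundle. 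A corresponding direct sum decomposition of $D_j$ further factors $\det(D_j)$ into a tensor product of determinants of rank-one problems.

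Stage three: orienting one-dimensional boundary conditions and checking the properties. For each one-dimensional summand with its induced $Pin$ structure, a standard $\bar\partial$-type operator has kernel and cokernel computable explicitly by Fourier expansion on $\partial D_j$, and the resulting finite-dimensional spaces carry orientations prescribed by the $Pin$ frame at a chosen basepoint: in the orientable case the $Pin$ structure reduces to a spin structure, which orients the one-dimensional determinant via the usual convention; in the M\"obius case the $Pin$ twist cancels the monodromy of the frame, again producing a canonical orientation. Assembling these factors gives the canonical orientation on $\det(D)$. Property (a) follows because every step of the construction is functorial in continuous families. Property (b) follows from the explicit one-dimensional model: reversing orientation on one boundary component flips exactly one rank-one factor. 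Property (c) follows from the naturality of pinching, trivialization, and Fourier decomposition under an isomorphism $\uchi : \uD \to \uD'$, since $\uchi$ preserves the $Pin$ structure by definition. The main obstacle is well-definedness: the orientation must not depend on the choice of cutting arcs, trivializations, or splittings. I expect this to reduce to a finite list of local moves (arc slides across a disk, change of trivialization by a loop in $O(r)$, reordering of summands) and a cocycle check combining the sign conventions of the linear gluing isomorphism with the $Pin$-to-orientation dictionary for rank-one boundary conditions. Handling this cocycle carefully, and in particular tracking the relative sign between the orientable and M\"obius cases, is the technical heart of the argument.
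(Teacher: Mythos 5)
The paper does not prove this proposition itself; it explicitly cites it as a restatement of Proposition~2.8 and Lemma~2.9 of~\cite{So06}, so there is no proof in the paper to compare against. Your reconstruction nevertheless follows the standard strategy of~\cite{So06} and of~\cite[Chapter~8]{FO09}: linear gluing along cutting arcs to reduce to a disjoint union of disks, a homotopy to split the boundary condition into a direct sum of rank-one totally real line bundles, and an explicit orientation in the rank-one case read off from the $Pin$ data. Two substantive gaps should be flagged. First, your Stage~3 does not say where the orientation of the boundary condition $F$ enters the construction. A $Pin$ structure on an orientable bundle does not distinguish between the two orientations of that bundle, and yet property~(b) requires the final orientation of $\det(D)$ to flip when the orientation of $F$ is reversed over a single boundary component. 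In~\cite{So06} the orientation of $F$ is used explicitly in normalizing the rank-one model (choosing which of the two homotopy classes of trivializations compatible with the $Pin$ frame one fixes), and without that step the sign in~(b) cannot be deduced; your phrase ``the $Pin$ structure reduces to a spin structure'' quietly presupposes the orientation without accounting for its effect. Second, you correctly identify the well-definedness cocycle --- independence of the cutting arcs, of the rank-one splitting, and of the ordering of factors, all under the sign conventions of the gluing isomorphism --- as the technical heart, but you only gesture at it as a finite list of local moves. That verification is the bulk of the actual proof in~\cite{So06}; as written your argument establishes a construction but not canonicity, and canonicity is precisely what makes properties~(a) and~(c) nontrivial.
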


\subsection{Conjugate}\label{ssec:conj}
Given a Riemann surface $\Sigma$ with complex structure $j,$ denote by $\overline \Sigma$ the same surface with the conjugate complex structure $-j,$ and denote by $\psi_\Sigma : \overline\Sigma \to \Sigma$ the anti-holomorphic map given by the identity on the underlying surface. Similarly, given a complex vector bundle $E \to \Sigma$ and a totally real boundary condition $F \to \partial \Sigma,$ denote by $\overline E \to \overline \Sigma$ and $\overline F \to \partial\overline\Sigma$ the same bundles with the conjugate complex structure on $E.$ Denote by $\psi_E : E \to \overline E$ the anti-complex-linear bundle isomorphism over $\psi_\Sigma^{-1}$ given by the identity on the underlying real vector bundles. Given a real linear Cauchy-Riemann operator
\[
D : \Gamma((\Sigma,\partial \Sigma),(E,F)) \to \Gamma(\Sigma,\Omega^{0,1}(E)),
\]
denote by $\overline D : \Gamma((\overline\Sigma,\partial \overline \Sigma),(\overline E,\overline F))\to \Gamma(\overline\Sigma,\Omega^{0,1}(\overline E)),$ the unique real linear Cauchy-Riemann operator such that the following diagram commutes:
\[
\xymatrix{
\Gamma(\Sigma,\Omega^{0,1}(E))\ar[r]^{\psi_\Sigma^*\otimes \psi_E} & \Gamma(\overline\Sigma,\Omega^{0,1}(\overline E)) \\
\Gamma((\Sigma,\partial \Sigma),(E,F))\ar[u]^D \ar[r]^{\psi_E} & \Gamma((\overline\Sigma,\partial \overline \Sigma),(\overline E,\overline F))\ar[u]^{\overline D}.
}
\]
Thus, to each Cauchy-Riemann $Pin$ boundary value problem $\uD,$ there corresponds a canonical \textbf{conjugate} $\overline \uD.$ Moreover, there is a natural map $\psi_{\det(D)} : \det(D) \to \det(\overline D).$ Given a complex vector bundle over a Riemann surface $E \to \Sigma$ and a totally real boundary condition $F \to \partial\Sigma,$ denote by $\mu(E,F)$ the \textbf{Maslov index}. The following is a special case of Proposition 2.12 of~\cite{So06}.
\begin{pr}\label{pr:scbvp}
Let $\uD = (\Sigma,E,F,\p,D)$ be a Cauchy-Riemann $Pin$ boundary problem with $\Sigma \simeq D^2$ and $F$ orientable. Then
\[
sgn(\psi_{\det(D)}) = \frac{\mu(E,F)}{2}.
\]
\end{pr}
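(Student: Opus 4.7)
The plan is to reduce the statement to a computation on standard model problems via continuous deformation and direct sum decomposition, exploiting that the canonical orientation varies continuously in families by Proposition~\ref{pr:cobvp}(a) and that both the conjugation map $\psi_{\det(D)}$ and the Maslov index are multiplicative under direct sums. The formula on the right-hand side is really $(-1)^{\mu(E,F)/2}$, which makes sense because orientability of $F$ forces $\mu(E,F)$ to be even.

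First I would show that any Cauchy-Riemann $Pin$ boundary value problem $\uD$ on the disk with orientable $F$ is homotopic through such problems to one in which $(E,F)$ splits as a direct sum of complex line-bundle problems $(E_j, F_j)_{j=1}^n$, each carrying its own $Pin$ structure and having even Maslov index $\mu(E_j,F_j) = 2k_j$ with $\sum_j 2k_j = \mu(E,F)$. The homotopy exists because any totally real sub-bundle of a trivial complex bundle over $D^2$ is classified up to homotopy by its Maslov index, and an orientation allows us to split off one line at a time. The $Pin$ structure is transported along the homotopy. Since conjugation, determinants, and canonical orientations all commute with direct sums (up to signs that I would track but which are identical on the $D$ and $\overline D$ sides, hence cancel in $\mathrm{sgn}(\psi_{\det(D)})$), I am reduced to showing $\mathrm{sgn}(\psi_{\det(D_j)}) = (-1)^{k_j}$ for a single line-bundle problem of Maslov index $2k_j$.

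Next, for each summand I would take the explicit model $E_j = D^2 \times \C$ with boundary condition $F_j|_{e^{i\theta}} = e^{ik_j \theta}\R$ and $D_j = \bar\partial$. A standard Fourier computation gives bases for $\ker(D_j)$ and $\coker(D_j)$: for $k_j \geq 0$ the cokernel vanishes and the kernel is $(2k_j + 1)$-dimensional, spanned by real polynomials of the form $z^a + z^{2k_j - a}$ for $0 \leq a \leq k_j$, with analogous duals when $k_j < 0$. The conjugation map $\psi_E$ acts on each basis element by $z \mapsto \bar z$ and complex conjugation on fibers, and I would read off its effect on the top exterior powers $\Lambda^{\max}\ker(D_j) \otimes \Lambda^{\max}\coker(D_j)$.

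The main obstacle, and where the bulk of the work lives, is comparing this linear-algebraic sign to the canonical orientation determined by the $Pin$ structure. The canonical orientation on $\det(D_j)$ is constructed by degenerating $D^2$ at a boundary point to a nodal configuration, reducing the orientation question to linear data on the fiber $F_{j,x}$ at that point, with the $Pin$ structure pinning down the remaining $\pm 1$ ambiguity. I would apply the same degeneration to $\overline{\uD}_j$ and observe that $\psi_E$ preserves the fiber $F_{j,x}$ but reverses the ambient orientation of $E_{j,x}$; combined with the transport formula for the $Pin$ structure under $\psi_E$, this produces a sign of $(-1)^{k_j}$, exactly matching the linear-algebraic computation and yielding the formula. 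Taking the product over $j$ gives $\mathrm{sgn}(\psi_{\det(D)}) = (-1)^{\mu(E,F)/2}$, as desired.
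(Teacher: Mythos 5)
The paper does not prove this proposition; it is cited verbatim as a special case of Proposition~2.12 of~\cite{So06}, so there is no in-paper proof to compare against. That said, your overall strategy --- homotope to a direct sum of line-bundle problems, compute the kernel and cokernel of the model $\bar\partial$ with boundary condition $e^{ik_j\theta}\R$, and compare with the canonical orientations via a boundary-point degeneration --- has the right shape and is the one used in~\cite{So06}. Two slips worth fixing along the way: your basis $\{z^a + z^{2k_j-a}\}_{0\le a\le k_j}$ has only $k_j+1$ elements and must be supplemented by $\{i(z^a - z^{2k_j-a})\}_{0\le a<k_j}$ to span the $(2k_j+1)$-dimensional real kernel; and since $\psi_E$ is by definition the identity on the underlying real bundles, $\psi_{\det(D)}$ is literally the identity map on the underlying real line, so the whole content of $sgn(\psi_{\det(D)})$ is the comparison of the two canonical orientations --- there is no separate ``linear-algebraic sign of the map'' to read off and then match.

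The genuine gap is in your last paragraph. You say the degeneration reduces the orientation question to ``linear data on the fiber $F_{j,x}$,'' and that $\psi_E$ reversing the orientation of $E_{j,x}$ together with $Pin$ transport ``produces a sign of $(-1)^{k_j}$.'' But $E_{j,x}$ and $F_{j,x}$ are single fibers, so any sign extracted from them alone is independent of $k_j$; this mechanism cannot produce a Maslov-dependent answer. In the Fukaya--Oh--Ohta--Ono/Solomon degeneration, $\det(D_j)$ factors (up to gluing isomorphism) into a piece coming from a constant-boundary-condition disk, which carries $\Lambda^{\max}F_{j,x}$ and the $Pin$ datum, tensored with the determinant of a Dolbeault operator on a closed bundle over $S^2$ obtained by doubling, whose first Chern class encodes $\mu(E_j,F_j)$. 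The $S^2$ factor is a complex vector space oriented by its complex structure; conjugation reverses that complex structure and flips the orientation by $(-1)$ raised to the complex index, and this is precisely where the $k_j$-dependence enters. Your sketch omits the $S^2$ factor entirely, so as written it cannot recover the formula; that is the step you must supply.
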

See Proposition 2.12 of~\cite{So06} for a sign formula that applies to arbitrary $\Sigma$ and $F.$

\section{Fukaya \texorpdfstring{$A_\infty$}{A-infinity} algebras}\label{sec:Fuk}
In the following, $(X,\omega)$ is a symplectic manifold, $L\subset X$ is an oriented $Pin$ Lagrangian submanifold, and $J$ is an $\omega$-tame almost complex structure. All gradings are valued in $\Z/N_L\Z$ where $N_L$ is the minimal Maslov number of $L.$
\subsection{Open stable maps}
A \textbf{smooth genus-$0$ open stable map} to $(X,L)$ of degree $\de \in H_2(X,L)$ with one boundary component and $k+1$ boundary marked points is a triple,
\[
\bu = (\Sigma, u,\vec{z}),
\]
where $\Sigma$ is a genus-$0$ nodal Riemann surface with one boundary component,
\[
u: (\Sigma,\d\Sigma) \to (X,L)
\]
is a continuous map, smooth on each irreducible component of $\Sigma,$ with
\[
u_*([\Sigma,\d\Sigma]) = \de,
\]
and $\vec{z} = (z_0,\ldots,z_k)$ with $z_j \in \partial \Sigma$ distinct. The labeling of the marked points $z_j$ respects the cyclic order given by the orientation of $\partial \Sigma$ induced by the complex orientation of $\Sigma.$ Stability means that if $\Sigma_i$ is an irreducible component of $\Sigma,$ then either $\int_{\Sigma_i}u^*\omega \neq 0$, or the sum of half the number of boundary marked points, half the number of boundary nodal points and the number of interior nodal points on $\Sigma_i$, is greater than the Euler characteristic of $\Sigma_i.$  Such a stable map is said to be \textbf{$J$-holomorphic} if $u$ is $J$-holomorphic on each irreducible component of~$\Sigma.$ An isomorphism of open stable maps $(\Sigma,u,\vec{z})$ and $(\Sigma',u',\vec{z}')$ is a homeomorphism $\theta : \Sigma \to \Sigma'$, biholomorphic on each irreducible component, such that
\[
u = u' \circ \theta, \qquad\qquad  z_j' = \theta(z_j), \quad j = 0,\ldots,k.
\]
For $\de \in H_2(X,L),$ denote by $\M_{k+1}(L,\de)$ the moduli space of $J$-holomorphic genus zero open stable maps to $(X,L)$ of degree $\de$ with one boundary component, and $k+1$ boundary marked points.
Denote by
\begin{equation*}
ev_j^\de:\M_{k+1}(L,\de)\to L, \qquad  \qquad j=0,\ldots,k,
\end{equation*}
the evaluation maps given by $ev_j^\de((\Sigma,u,\vec{z}))=u(z_j).$
Stability implies that
\begin{equation}\label{eq:stemp}
\M_{1}(L,0) = \emptyset = \M_{2}(L,0).
\end{equation}

\subsection{The virtual fundamental class and orientation}\label{ssec:vfc}
In general, the moduli spaces $\M_{k+1}(L,\de)$ are only metrizable spaces. They can be highly singular and have varying dimension. Nonetheless, the theory of the virtual fundamental class being developed by several authors~\cite{Fu09a,FO15,FO17,HW10,HW17} allows one to perturb the $J$-holomorphic map equation to obtain moduli spaces that are weighted branched orbifolds with corners. From now on, we denote by $\M_{k+1}(L,\de)$ the perturbed moduli spaces and by $ev_i^\beta$ the associated evaluation maps. Thus, we may consider pull-backs by $ev_i^\beta$ of differential forms from $L$ to $\M_{k+1}(L,\de)$. Furthermore, by averaging over continuous families of perturbations, one can make $ev_0^\beta$ behave like a submersion. So, by virtue of the canonical orientation described below, the push-forward of differential forms along $ev_0^\beta$ is well-defined. See~\cite{Fu09a,FO15,FO17}. When the unperturbed moduli spaces are smooth of expected dimension and $ev_0^\beta$ is a submersion, one can choose the perturbations to be trivial.

We now describe the canonical orientation on the perturbed moduli space $\M_{k+1}(L,\de)$ induced by the $Pin$ structure and orientation on $L$. It suffices to describe the orientation at an irreducible stable map $\bu = (\Sigma,u,\vec z) \in \M_{k+1}(L,\de).$ So, in particular, we may assume $\Sigma \simeq D^2$. To such $\bu,$ associate the Cauchy-Riemann $Pin$ boundary value problem $\uD_\bu$ with
$E_\bu = u^* TX, \; F_\bu = u^*TL,$ with the $Pin$ structure $\p_\bu$ on $F_\bu$ and the orientation of $F_\bu$ pulled-back from the corresponding structures on $L,$ and with $D_\bu$ the linearization at $u$ of the perturbed $J$-holomorphic map operator. Thus, the orientation of $\M_{k+1}(L,\de)$ at $\bu$ is determined by an orientation of the one dimensional real vector space
\begin{equation}\label{eq:Lu}
\cL_\bu = \det(D_\bu)\otimes \det(\bigoplus_{j = 0}^k T_{z_j}\partial \Sigma).
\end{equation}
On the other hand, $\cL_\bu$ is canonically oriented as follows. The tangent spaces $T_{z_j} \partial\Sigma$ are each equipped with the orientation induced on $\partial \Sigma$ from the complex orientation of $\Sigma.$ The spaces $T_{z_j}\partial\Sigma$ are ordered in the direct sum according to their labeling from $0$ to $k.$ Finally, the determinant line $\det(D_\bu)$ is equipped with the canonical orientation of Proposition~\ref{pr:cobvp}. An isomorphism $\theta : \bu \to \bu'$ induces an orientation preserving isomorphism $\cL_\bu \to \cL_{\bu'}$ by Proposition~\ref{pr:cobvp}\ref{it:iso}.

We conclude by defining the notion of an admissible diffeomorphism
\[
f : \M_{k+1}(L,\de) \to \M_{k+1}(L',\de')
\]
and explaining how to compute its sign with respect to the canonical orientation. This will be used in Section~\ref{ssec:inv} below. Denote by $\B_{k+1}(L,\de)$ the Banach manifold obtained by a suitable completion of the space of smooth genus zero open stable maps to $(X,L)$ of degree $\de$ with one boundary component, $k+1$ boundary marked points and domain $\Sigma = D^2.$ Isomorphic maps are not identified. Denote by
\[
\E_{k+1}(L,\de) \to \B_{k+1}(L,\de)
\]
the bundle with fiber over a stable map $\bu = (\Sigma,u,\vec z)$ a suitable Banach completion of the space of smooth sections of $\Omega^{0,1}(u^*TX)$. Denote by $\CR$ the section of $\E_{k+1}(L,\de)$ given by the perturbed $J$-holomorphic map operator. We say that $f$ is \textbf{admissible} if the following conditions are satisfied:
\begin{enumerate}
\item
There exists a diffeomorphism $f_\B : \B_{k+1}(L,\de) \to \B_{k+1}(L',\de')$ such that $f_\B$ and $f$ agree up to isomorphism on perturbed $J$-holomorphic open stable maps with domain $\Sigma = D^2.$
\item \label{it:fE}
There exists a bundle map $f_\E : \E_{k+1}(L,\de) \to \E_{k+1}(L',\de')$ covering $f_\B$ such that
\begin{equation}\label{eq:BE}
\CR \circ f_\B = f_\E \circ \CR.
\end{equation}
\end{enumerate}
For a stable map $\bu = (\Sigma,u,\vec z) \in \M_{k+1}(L,\de),$ with $\Sigma = D^2,$ consider the associated Cauchy-Riemann $Pin$ boundary value problem $\uD_\bu = (\Sigma,E_\bu,F_\bu,\p_\bu,D_\bu)$ and real line $\cL_\bu$ defined above. The tangent space $T_\bu \B_{k+1}(L,\de)$ is the domain of $D_\bu$ direct sum $\bigoplus_{j = 0}^k T_{z_j}\partial\Sigma$ and the fiber of $\E$ at $\bu$ is the range of $D_\bu.$ So, equation~\eqref{eq:BE} implies that $df_\B$ and $f_\E$ together induce a map
\[
\cL(f) : \cL_\bu \to \cL_{f(\bu)}.
\]
The sign of $f$ is the sign of $\cL(f)$ with respect to the canonical orientations of $\cL_\bu$ and $\cL_{f(\bu)}.$

\subsection{\texorpdfstring{$A_\infty$}{A-infinity} algebra of a Lagrangian}\label{ssec:ainfL}
Take $\k = \C.$ Define $\overline C(L)$ to be the graded vector space of differential forms on $L$ and define $C(L) = \overline C(L) \otimes \Nov.$ Denote by $G_L \subset \R_{\geq 0}$ the smallest monoid that contains the symplectic areas of all $J$-holomorphic disks with boundary in $L$.
By Gromov compactness, for any $E \in \R,$ the set of $\beta \in G$ such that $\beta \leq E$ is finite.

For all $\de\in H_2(X,L)$, $k\ge 0$,  $(k,\de) \neq (1,0),$ define
\[
\m_{k,\de}^L:\overline C(L)^{\otimes k} \lrarr \overline C(L)
\]
by
\begin{align*}
\m_{k,\de}^L(\alpha_1\otimes\cdots\otimes\alpha_k):=
(-1)^{\varepsilon(\alpha)}
(ev_0^\de)_* \left(\bigwedge_{j=1}^k (ev_j^\de)^*\alpha_j\right)
\end{align*}
with
\[
\varepsilon(\alpha):=\sum_{j=1}^kj(|\alpha_j|+1)+1.
\]
More explicitly, $\m_{0,\de}^L$ is given by $-(ev_0^{\de})_*1.$ The push-forward of a differential form along a map from the empty set is understood as zero. Thus, it follows from~\eqref{eq:stemp} that $\m_{0,0}^L = 0.$ Define
\[
\m_{1,0}^L(\alpha) = d\alpha.
\]
For $\beta \in G_L, k \geq 0,$ define
\[
\m_{k,\beta}^L : \overline C(L)^{\otimes k} \lrarr \overline C(L)
\]
by
\begin{equation*}
\m_{k,\beta}^L = \sum_{\substack{\de \in H_2(X,L) \\ \omega(\de) = \beta}} \m_{k,\de}.
\end{equation*}
Define
\[
\m_k^L : C(L)^{\otimes k} \lrarr C(L)
\]
by
\begin{equation*}
\m_k^L:=\sum_{\beta\in G_L}T^{\beta}\m_{k,\beta}^L \otimes \Id_\Nov.
\end{equation*}

\begin{pr}[$A_\infty$ relations]\label{cl:a_infty_m}
The operations $\{\m_k^L\}_{k\ge 0}$ define an $A_\infty$ structure on $C(L)$.
\end{pr}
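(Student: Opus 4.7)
The plan is to verify the two defining properties of a $G_L$ gapped $A_\infty$ algebra: the norm bounds and the $A_\infty$ relations~\eqref{eq:ainf}, which are equivalent to~\eqref{eq:Ggainf} holding for each $\beta \in G_L$. The norm conditions are immediate: every term of $\m_k^L$ carries a factor $T^\beta$ with $\beta \geq 0$, so $\|\m_k^L\| \leq 1$, and the strict inequality $\|\m_0^L\| < 1$ follows from the vanishing $\M_1(L,0) = \emptyset$ in~\eqref{eq:stemp}, which rules out any $\beta = 0$ summand in $\m_0^L$. The Gromov compactness statement noted in Section~\ref{ssec:ainfL} ensures $G_L$ is locally finite, so the sums defining $\m_k^L$ converge in the Novikov norm.

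The central step is to establish~\eqref{eq:Ggainf} for $\beta > 0$ via Stokes' theorem for push-forwards. Fix $\de \in H_2(X,L)$ with $\omega(\de) = \beta$ and forms $\alpha_1, \ldots, \alpha_k \in \overline C(L)$, and set $\xi = \bigwedge_{j=1}^k (ev_j^\de)^* \alpha_j$ on $\M_{k+1}(L,\de)$. The virtual Stokes formula for $(ev_0^\de)_*$ yields a relation
\[
d (ev_0^\de)_* \xi \;=\; (ev_0^\de)_* d\xi \;\pm\; (ev_0^\de|_{\partial \M_{k+1}(L,\de)})_* \xi|_\partial .
\]
The codimension-one boundary of $\M_{k+1}(L,\de)$ decomposes into strata corresponding to disk bubbling, each given by the fiber product of $\M_{k_1+1}(L,\de_1)$ and $\M_{k_2+1}(L,\de_2)$ over $L$ along $ev_i^{\de_1}$ and $ev_0^{\de_2}$, with $\de_1 + \de_2 = \de$, $k_1 + k_2 = k+1$, and $1 \leq i \leq k_1$. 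The push-pull formula across each fiber product converts the boundary contribution into the composition $\m_{k_1,\de_1}^L(\alpha_1, \ldots, \m_{k_2,\de_2}^L(\alpha_i, \ldots), \ldots, \alpha_k)$. Combining these boundary terms with the Leibniz expansion of $(ev_0^\de)_* d\xi$, which accounts for the insertions of $d\alpha_j = \m_{1,0}^L(\alpha_j)$, and summing over all $\de$ with $\omega(\de) = \beta$ reproduces~\eqref{eq:Ggainf}. The residual $\beta = 0$ case is classical: only $\m_{1,0}^L = d$ and $\m_{2,0}^L = \pm \wedge$ contribute, and the relations reduce to $d^2 = 0$ together with the graded Leibniz rule.

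The main obstacle is sign bookkeeping. Three sign contributions must combine to produce the Koszul sign $(-1)^{\sum_{j=1}^{i-1}(|\alpha_j|+1)}$ required by~\eqref{eq:Ggainf}: the sign $(-1)^{\varepsilon(\alpha)}$ built into the definition of $\m_{k,\de}^L$ together with its analogues for the bubbled pieces; Koszul signs arising from rearranging forms under pullback, wedge product, and push-forward; and the discrepancy between the induced boundary orientation on $\partial \M_{k+1}(L,\de)$ and the product orientation on each fiber-product stratum. The last of these is controlled by Proposition~\ref{pr:cobvp}, applied to the disk-gluing map viewed as an admissible diffeomorphism in the sense of Section~\ref{ssec:vfc}, together with the canonical orientation on $\cL_\bu$ from~\eqref{eq:Lu}. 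The sign $\varepsilon(\alpha)$ was calibrated precisely so that these three contributions cancel into the required Koszul sign; verifying the cancellation explicitly is the technical core of the proof.
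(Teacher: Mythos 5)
Your overall structure --- norm bounds via the $T^\beta$ factors and $\M_1(L,0)=\emptyset$, then Stokes' theorem for pushforward, decomposition of the codimension-one boundary into fiber products, push-pull, the Leibniz rule, and sign cancellation --- is the right approach and is exactly what the references the paper cites for this result (\cite{Fu09a} and \cite[Proposition~2.7]{ST16a}) actually do; the paper itself gives no proof beyond pointing to those references.

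The one genuine inaccuracy is in your treatment of the boundary orientation. An admissible diffeomorphism in the sense of Section~\ref{ssec:vfc} is a diffeomorphism $\M_{k+1}(L,\de) \to \M_{k+1}(L',\de')$ between whole moduli spaces of the same dimension, and the paper introduces this notion precisely to compute $sgn(\tilde\phi)$ in Section~\ref{ssec:inv}. The disk-gluing map into a codimension-one boundary stratum is not such a diffeomorphism, so it does not fit that framework, and Proposition~\ref{pr:cobvp} by itself does not give the needed comparison. What actually governs the discrepancy between the induced boundary orientation on $\partial\M_{k+1}(L,\de)$ and the fiber-product orientation on each stratum is a gluing multiplicativity for the canonical orientations of determinant lines of Cauchy-Riemann $Pin$ boundary value problems --- a stronger structural fact than the properties recorded in Proposition~\ref{pr:cobvp}, established for instance in \cite[Chapter~8]{FO09} and in \cite{So06}. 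Your outline is recoverable once this substitution is made, but the tool you cite does not deliver the claimed control of boundary orientations.
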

The differential form approach to Fukaya $A_\infty$ algebras is due to Fukaya~\cite{Fu09a}. For a concise exposition, see~\cite[Proposition 2.7]{ST16a}. In the Fukaya category, $(C(L),\m_k^L)$ is the endomorphism algebra of the object $L.$
\begin{rem}\label{rem:modE}
For $E > 0,$  the virtual fundamental class technique of Fukaya~\cite{Fu09a} allows one to perturb the moduli spaces $\M_{k+1}(L,\de)$ for $\omega(\de) < E.$ One defines $\m^L_{k,\beta}$ as above for $\beta < E,$ and these operations give an $A_\infty$ algebra modulo $T^E.$ An obstruction theory argument based on the work of Fukaya, Oh, Ohta and Ono~\cite{FO09} extends the $A_\infty$ structures modulo $T^E$ to a full $A_\infty$ algebra structure. We outline this argument in Appendix~\ref{sec:extmodE}.
\end{rem}

\begin{lm}\label{lm:m2}
We have $\m_{2,0}(\alpha_1,\alpha_2) = (-1)^{|\alpha_1|} \alpha_1 \wedge \alpha_2.$
\end{lm}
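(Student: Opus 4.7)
The plan is to identify $\M_3(L,0)$ with $L$ and then unravel the definition of $\m_{2,0}$ directly, paying careful attention to orientations and signs.

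First, I observe that by stability, every element of $\M_3(L,0)$ is a constant map from a smooth disk with three distinct cyclically ordered boundary marked points. Since the group of biholomorphisms of the disk acts simply transitively on such ordered triples, up to isomorphism there is a unique such configuration for each $p \in L$. This gives a bijection $\M_3(L,0) \cong L$ under which each of the three evaluation maps $ev_0^0, ev_1^0, ev_2^0$ coincides with this identification. Moreover, $\M_3(L,0)$ is already smooth of expected dimension $n = \dim L$, so no perturbation is required and the push-forward along $ev_0^0$ is literally the identity on differential forms.

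Next, I will verify that this bijection is orientation-preserving with respect to the canonical orientation described in Section~\ref{ssec:vfc}. At a constant stable map $\bu_p$, the pulled-back bundle $u^*TX$ is trivial, and the operator $D_{\bu_p} = \bar\partial$ acts on sections of the trivial bundle with totally real boundary condition $T_pL$. Its kernel is $T_pL$ (the constant real sections) and its cokernel vanishes, so there is a canonical isomorphism $\det(D_{\bu_p}) \cong \det(T_pL)$. Combined with the boundary-induced orientations on the three tangent lines $T_{z_0}\partial\Sigma, T_{z_1}\partial\Sigma, T_{z_2}\partial\Sigma$, I need to check that the canonical orientation on $\cL_{\bu_p}$ furnished by Proposition~\ref{pr:cobvp} matches the orientation of $T_pL$ under the identification via $ev_0^0$. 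This is the step that requires the most care, and I expect it to reduce to the normalization of the canonical orientation at a trivial disk in the theory of Cauchy-Riemann $Pin$ boundary value problems from~\cite{So06}.

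With these identifications in place, the definition of $\m_{2,0}$ collapses to
$$\m_{2,0}(\alpha_1,\alpha_2) = (-1)^{\varepsilon(\alpha)} (ev_0^0)_*\bigl((ev_1^0)^*\alpha_1 \wedge (ev_2^0)^*\alpha_2\bigr) = (-1)^{\varepsilon(\alpha)}\, \alpha_1 \wedge \alpha_2,$$
and a direct computation gives $\varepsilon(\alpha) = (|\alpha_1|+1) + 2(|\alpha_2|+1) + 1 \equiv |\alpha_1| \pmod 2$, yielding the claimed sign. The main obstacle is the orientation verification outlined in the second step; once that is granted, the remaining manipulations are essentially bookkeeping.
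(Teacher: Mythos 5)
Your overall architecture is correct and essentially matches the proof the paper defers to (it cites \cite[Proposition 3.7]{ST16a} rather than proving the lemma in-line): identify $\M_3(L,0) \cong L$ via the constant maps, note that all three evaluation maps become the identity under this identification, note that the moduli space is smooth of the expected dimension so the perturbation may be taken trivial, and compute $\varepsilon(\alpha) \equiv |\alpha_1| \pmod 2$. That last arithmetic is right: $\varepsilon(\alpha) = 1\cdot(|\alpha_1|+1) + 2(|\alpha_2|+1) + 1 \equiv |\alpha_1| \pmod 2$.

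However, the proof as written is not complete, and you yourself flag the exact point where it breaks off: you write ``I expect it to reduce to the normalization of the canonical orientation at a trivial disk'' without carrying the reduction out. This is not a minor omission — it is the entire content of the sign. Under the identification $\M_3(L,0) \cong L$, the formula $(ev_0^0)_*\bigl((ev_1^0)^*\alpha_1 \wedge (ev_2^0)^*\alpha_2\bigr)$ equals $\pm\,\alpha_1\wedge\alpha_2$, where the sign is precisely the sign comparing the canonical orientation of $\cL_{\bu_p}$ (as in equation~\eqref{eq:Lu}, using Proposition~\ref{pr:cobvp}) against the given orientation of $L$ transported via $ev_0^0$. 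Since $\cL_{\bu_p}$ is built from $\det(D_{\bu_p}) \cong \det(T_pL)$ together with the three oriented lines $T_{z_j}\partial\Sigma$ (a total of $n+3$ dimensions, while $\dim\M_3(L,0)=n$), one must also make explicit how the $3$-dimensional reparametrization group is accounted for and with what sign convention; getting this wrong flips the final answer between $+\alpha_1\wedge\alpha_2$ and $-\alpha_1\wedge\alpha_2$. Until that orientation bookkeeping is done — which requires pinning down the conventions of~\cite{So06} and the identification of $\det(T_\bu\M)$ with $\cL_\bu$ in Section~\ref{ssec:vfc} — the proof establishes the claim only up to an undetermined overall sign. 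To close the gap you should either perform the explicit normalization check at a constant disk or invoke the corresponding computation in~\cite[Proposition 3.7]{ST16a}, which is what the paper itself does.
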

See~\cite[Proposition 3.7]{ST16a} for a concise exposition.

Recall the notion of bounding cochain from Section~\ref{ssec:bc}. A bounding cochain for the Fukaya $A_\infty$ algebra $(C(L),\m_k^L)$ is called a bounding cochain for $L.$ We say that $L$ is unobstructed if there exists at least one bounding cochain. Given a bounding cochain $b$ for $L,$ the \textbf{Floer cohomology} $HF^*(L,b)$ is the cohomology of the complex $C(L)$ with respect to the differential $(\m_1^L)^b.$

\section{Anti-symplectic involutions and opposites}\label{ssec:inv}
We continue with $(X,\omega),L$ and $J$ as in Section~\ref{sec:Fuk}. Let $\phi:X\to X$ be an anti-symplectic involution. Choose $J$ such that $\phi^*J =-J$. Denote by $\phi(L)$ the image of $L$ under $\phi$ equipped with the induced $Pin$ structure and an arbitrary orientation. The main result of this section is the following.
\begin{tm}\label{tm:sign}
Assume the Maslov class of $L$ vanishes modulo~$4.$ Then, the pull-back of differential forms $\phi^*: C(\phi(L)) \to C(L)$ is a strict $A_\infty$ isomorphism from $(C(\phi(L)),\m^{\phi(L)})$ to $(C,\m^{L,op}).$ That is,
\[
\phi^*\m_k^{\phi(L)}(\alpha_1,\ldots,\alpha_k)=
(-1)^{\spe{\tau}{\alpha} + k + 1}\m_k^L(\phi^*\alpha_k,\ldots,\phi^*\alpha_1).
\]
\end{tm}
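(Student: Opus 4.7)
The plan is to realize $\phi^*$ as arising from a change of variables in the integrals defining the Fukaya operations. For each $\de \in H_2(X, \phi(L))$, I would construct a diffeomorphism
\[
f_\de : \M_{k+1}(\phi(L), \de) \to \M_{k+1}(L, -\phi_*\de)
\]
by sending $(\Sigma, u, (z_0, z_1, \ldots, z_k))$ to $(\bar\Sigma, \phi \circ u \circ \psi_\Sigma, (\psi_\Sigma^{-1}(z_0), \psi_\Sigma^{-1}(z_k), \ldots, \psi_\Sigma^{-1}(z_1)))$, where $\psi_\Sigma$ is the map of Section~\ref{ssec:conj}. The assumption $\phi^*J = -J$ together with the anti-holomorphicity of $\psi_\Sigma$ makes $\phi \circ u \circ \psi_\Sigma$ genuinely $J$-holomorphic, while the reversal of the cyclic order on the input marked points $z_1, \ldots, z_k$ compensates for the reversal of the boundary orientation of $\partial\Sigma$ under $\psi_\Sigma$. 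The identity $\phi^*\omega = -\omega$ combined with the orientation-reversing precomposition by $\psi_\Sigma$ shows $f_\de$ preserves symplectic area, so the $T^\beta$ coefficients match. The construction has to be carried out at the level of perturbed moduli spaces, which is arranged by choosing virtual perturbations on $(X, \phi(L))$ compatible with those on $(X, L)$ under the conjugation.

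The sign of $f_\de$ with respect to the canonical orientations is computed by lifting it to an admissible diffeomorphism of ambient Banach manifolds as in Section~\ref{ssec:vfc}. For $\bu \in \M_{k+1}(\phi(L), \de)$ with $\Sigma \simeq D^2$, the associated Cauchy-Riemann $Pin$ boundary value problem $\uD_{f_\de(\bu)}$ is naturally isomorphic to the conjugate $\overline{\uD_\bu}$ of Section~\ref{ssec:conj}. The sign then decomposes into three contributions: (i) the sign of the induced map $\psi_{\det(D_\bu)} : \det(D_\bu) \to \det(\overline{D_\bu})$, which by Proposition~\ref{pr:scbvp} equals $(-1)^{\mu(\de)/2}$; (ii) the sign on $\bigoplus_{j=0}^k T_{z_j}\partial\Sigma$ from the orientation reversal of $\partial\Sigma$ on each of the $k+1$ factors together with the reversing permutation of $z_1,\ldots,z_k$, giving $(-1)^{(k+1)+k(k-1)/2}$; and (iii) an input-independent sign comparing the arbitrary orientation chosen on $\phi(L)$ with its pullback under $\phi$, which is absorbed by the analogous orientation dependence of $\m_k^{\phi(L)}$. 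The Maslov-mod-$4$ hypothesis forces $\mu(\de) \equiv 0 \pmod 4$ on every disk class, so contribution~(i) is trivial; this is the essential geometric input.

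With $f_\de$ and its sign in hand, the change of variables uses the identities $ev_0 \circ f_\de = \phi \circ ev_0$ and $ev_j \circ f_\de = \phi \circ ev_{k-j+1}$ for $1 \leq j \leq k$. These let me rewrite
\[
\phi^* (ev_0)_* \bigwedge_{j=1}^k (ev_j)^* \alpha_j
\]
as a push-forward from $\M_{k+1}(L, -\phi_*\de)$ of the wedge product $\bigwedge_{j=1}^k (ev_{k-j+1})^*(\phi^*\alpha_j)$. Reassembling this into the standard form $\bigwedge_{i=1}^k (ev_i)^* (\phi^*\alpha_{k-i+1})$ produces the Koszul reversal sign $(-1)^{\sum_{i<j}|\alpha_i||\alpha_j|}$. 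Combining this with the sign of $f_\de$, the $(-1)^{\varepsilon(\cdot)}$ pre-factors on both sides, and the parity identity
\[
\spe{\tau}{\alpha} \equiv \sum_{i<j}|\alpha_i||\alpha_j| + (k-1)\sum_j|\alpha_j| + \binom{k}{2} \pmod 2,
\]
one recovers the claimed sign $(-1)^{\spe{\tau}{\alpha}+k+1}$. Summing over $\de$ (with $\omega(-\phi_*\de)=\omega(\de)$ making the $T^\beta$ weights match) and tensoring with $\Nov$ completes the proof.

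The main obstacle is the final sign accounting: several independent sources---the canonical orientation of the Cauchy-Riemann determinant, the permutation of boundary tangent spaces, the Koszul sign from reordering the wedge product, the $\varepsilon$ pre-factors, and the degree-shifted bilinear form defining $\spe{\tau}{\alpha}$---must conspire to yield exactly $(-1)^{\spe{\tau}{\alpha}+k+1}$ with no margin for error. All the geometric content, however, sits in the appeal to Proposition~\ref{pr:scbvp}, which requires the Maslov-mod-$4$ hypothesis precisely in order to trivialize the sign coming from the conjugation of the determinant line.
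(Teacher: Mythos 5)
Your proposal is correct and follows essentially the same route as the paper. You build the same conjugation map between moduli spaces (the paper's $\tilde\phi$, running in the opposite direction, but this is the same map since $\phi$ and $\psi_\Sigma$ are involutions), decompose its sign into the conjugation of the Cauchy-Riemann determinant line via Proposition~\ref{pr:scbvp}, the orientation reversal on each $T_{z_j}\partial\Sigma$, and the sign of the reversal permutation $\tau$; you then combine this with the Koszul reordering of the wedge product, the $\varepsilon$ pre-factors, and the parity identity for $\spe{\tau}{\alpha}$, exactly as the paper does in Propositions~\ref{prop:q_sgn_c} and~\ref{prop:sgn_phit} and Corollary~\ref{cy:sign}. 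Your item~(iii) correctly identifies that the two occurrences of $sgn(\phi|_L)$ --- one from Proposition~\ref{pr:cobvp}\ref{it:ro} applied to the orientation of the boundary condition, the other from converting $\phi_*$ to $\phi^*$ via Lemma~\ref{lm:pp} --- cancel, and the Maslov-mod-$4$ hypothesis kills the remaining $\mu(\de)/2$ contribution exactly where you say it does.
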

The proof of Theorem~\ref{tm:sign} is given at the end of the section. A formula that holds without the hypothesis that the Maslov class vanishes modulo 4 is presented in Corollary~\ref{cy:sign}.
\begin{rem}
More generally, $\phi^*$ is a strict $A_\infty$ functor from the Fukaya category of $X$ to its opposite category. The proof is a straightforward generalization.
\end{rem}

\subsection{Differential form calculation}\label{ssec:invdfc}
The following argument is modeled on Section~5.1 of~\cite{ST16b}. Denote by
\begin{equation*}
\tilde\phi: \M_{k+1}(L,\de) \lrarr \M_{k+1}(\phi(L),-\phi_*\de)
\end{equation*}
the map induced by $\phi$ defined as follows. Given a nodal Riemann surface with boundary $\Sigma$ with complex structure $j,$ denote by $\overline \Sigma$ a copy of $\Sigma$ with the opposite complex structure~$-j.$ Denote by $\psi_\Sigma: \overline \Sigma \to\Sigma$ the anti-holomorphic map given by the identity map on points. Then
\begin{equation}\label{eq:tp}
\tilde\phi\left(\Sigma,u,(z_0,\ldots,z_k)\right)
=\left(\overline \Sigma,\phi\circ u\circ \psi_\Sigma, (\psi^{-1}_\Sigma(z_0),\psi^{-1}_\Sigma(z_k),\ldots,\psi^{-1}_\Sigma(z_1))\right ).
\end{equation}
Clearly, formula~\eqref{eq:tp} preserves solutions of the $J$-holomorphic map equation. The degree of the map $\phi\circ u \circ \psi_\Sigma$ is $-\phi_*\de$ because $\psi_\Sigma$ is orientation reversing. It is a common feature of the virtual fundamental class techniques mentioned in Section~\ref{ssec:vfc} that one can choose the relevant perturbations of the $J$-holomorphic map equation in such a way that formula~\eqref{eq:tp} gives a map of the perturbed moduli spaces as well.
It follows immediately from the definition of $\tilde \phi$ that
\begin{equation}\label{eq:phev}
\phi \circ ev_j^\de = ev_{k+1-j}^{-\phi_*\de}\circ\tilde\phi, \qquad j = 1,\ldots,k, \qquad \qquad
\phi \circ ev_0^\de = ev_{0}^{-\phi_*\de}\circ \tilde\phi.
\end{equation}
Before proceeding, we recall the following properties of push-forward of differential forms from~\cite{KuS12,ST16a}. Let $A^*(M)$ denote the differential forms on $M.$
\begin{pr}\label{pr:print}\leavevmode
\begin{enumerate}
	\item \label{prop:normalization}
	Let $f:M\to pt$ and $\alpha\in A^m(M)$. Then
	\[
	f_*\alpha=\begin{cases}
	\int_M\alpha,& m=\dim M,\\
	0,&\text{otherwise}.
	\end{cases}
	\]
	\item\label{prop:pushcomp}
		Let $g: P\to M$, $f:M\to N,$ be proper submersions. Then
		\[
		f_*\circ g_*=(f\circ g)_*.
		\]
	\item\label{prop:pushpull}
		Let $f:M\to N$ be a proper submersion, $\alpha\in A^*(N),$ $\beta\in A^*(M)$. Then
		\[
		f_*(f^*\alpha\wedge\beta)=\alpha\wedge f_*\beta.
		\]
	\item\label{prop:pushfiberprod}
		Let
		\[
		\xymatrix{
		{P\times_N M}\ar[r]^{\quad p}\ar[d]^{q}&
        {M}\ar[d]^{f}\\
        {P}\ar[r]^{g}&N
		}
		\]
		be a pull-back diagram of smooth maps, where $g$ is a proper submersion. Let $\alpha\in A^*(M).$ Then
		\[
		q_*p^*\alpha=g^*f_*\alpha.
		\]
\end{enumerate}
\end{pr}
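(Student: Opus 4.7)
The plan is to reduce each of the four assertions to local computations using submersion charts. Since $f: M \to N$ is a proper submersion of some relative dimension $d$, one can cover $M$ by charts of the form $U \times F \to U$ with $U \subset N$ open and $F$ an open subset of $\R^d$, and via a partition of unity write any form on $M$ as a locally finite sum of pieces supported in such charts. In a chart, one defines $f_*$ as integration over the fiber coordinates $F$ of the vertical component of the form, using the fiber orientation; standard arguments then show the result is independent of all choices and yields a linear operator $f_* : A^{*}(M) \to A^{*-d}(N)$ that is natural under orientation-preserving diffeomorphisms.

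Statements (a) and (c) follow directly from this description. For (a), when $N$ is a point there are no horizontal directions, so $f_*\alpha = \int_M \alpha$ in top degree and vanishes otherwise. For (c), the form $f^*\alpha$ is purely horizontal: its restriction to each fiber of $f$ is locally constant, so in a submersion chart the vertical content of $f^*\alpha \wedge \beta$ sits entirely in $\beta$. Pulling the constant factor $\alpha$ out of the fiberwise integral gives $f_*(f^*\alpha \wedge \beta) = \alpha \wedge f_*\beta$.

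For (b), the approach is a Fubini argument. Given $g : P \to M$ and $f : M \to N$, choose compatible trivializations so that $f$ looks like $U \times F \to U$ and $g$ looks like $(U \times F) \times G \to U \times F$; then $f\circ g$ is a submersion with fiber $F \times G$, and integration over $F \times G$ equals iterated integration over $G$ followed by $F$ by the classical Fubini theorem. For (d), any trivialization $V \times F \to V$ of $f$ pulls back under $g$ to a trivialization $g^{-1}(V) \times F \to g^{-1}(V)$ of $q$ with the same fiber $F$ and orientation; the form $p^*\alpha$ in this chart agrees with $\alpha$ after identifying fibers, and integrating over the fiber yields $q_* p^* \alpha = g^* f_* \alpha$ in coordinates. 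Globalization is by partition of unity together with the $A^*(N)$-linearity established in (c).

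The main obstacle is not any one argument but the bookkeeping: one must fix a convention for the orientation on a fiber (vertical-first versus horizontal-first) and verify that the fiber-integration operator is independent of the choice of submersion chart and partition of unity, so that the local statements patch to global identities. In particular, the composition law (b) is the most delicate, since it is there that an incorrect orientation convention would produce a spurious sign. Since these are standard verifications in the theory of fiber integration, one could alternatively simply cite the detailed treatments of pushforward in \cite{KuS12, ST16a}, from which all four properties are extracted directly.
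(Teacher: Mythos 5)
Your sketch is correct, but it is worth knowing that the paper itself supplies no proof of this proposition: it is stated as a recollection of standard properties of the push-forward, with a citation to \cite{KuS12,ST16a}, which is exactly the alternative you offer in your last sentence. Your outline — define $f_*$ locally by integration over the fiber coordinates in a submersion chart, check independence of choices, then obtain (a) and (c) from the local description, (b) from Fubini, and (d) from the compatibility of trivializations of $f$ with its base change — is the standard construction underlying those references, and you correctly identify the one genuinely delicate point, namely fixing the fiber-orientation convention consistently so that (b) comes out without a spurious sign (this is also where the degree bookkeeping $f_* : A^*(M) \to A^{*-d}(N)$ lives). One small remark on (d): as printed, the hypothesis names $g$ as the proper submersion, but for $f_*$ and $q_*$ to be defined one really needs $f$ to be the proper submersion (its base change $q$ is then automatically one), and this is how the proposition is actually invoked in the proof of Lemma~\ref{lm:pp}, where $g$ is a point inclusion; your argument, which trivializes $f$ and transports the trivialization to $q$, implicitly adopts this correct reading. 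A last caveat, outside the scope of the statement but relevant to how it is used: the moduli spaces to which these identities are later applied are weighted branched orbifolds with corners rather than manifolds, so the versions needed in practice are the more elaborate ones established in the cited references rather than the bare manifold case you prove.
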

\begin{lm}\label{lm:pp}
Let $f : M \to M$ be a diffeomorphism and let $\alpha \in A^*(M).$ Then
\[
f^*\alpha = (-1)^{sgn(f)} f^{-1}_*\alpha.
\]
\end{lm}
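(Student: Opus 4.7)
The plan is to verify the identity by pairing both sides against an arbitrary test form $\omega \in A^{\dim M - |\alpha|}(M)$ and using nondegeneracy of the wedge-integration pairing on $M$. Since $f^{-1}$ is a diffeomorphism, its fibers are zero-dimensional and $(f^{-1})_*\alpha$ again lies in $A^{|\alpha|}(M),$ so both sides of the target identity have the same degree. Concretely, I would show
\[
\int_M \omega \wedge (f^{-1})_*\alpha = (-1)^{sgn(f)} \int_M \omega \wedge f^*\alpha
\]
for every such $\omega$, and then conclude by nondegeneracy.

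For the left-hand side, I would apply Proposition~\ref{pr:print}(\ref{prop:pushpull}) to the diffeomorphism $f^{-1}: M \to M$ with $\omega$ and $\alpha$ playing the roles of $\alpha$ and $\beta$ in that statement, obtaining
\[
\omega \wedge (f^{-1})_*\alpha = (f^{-1})_*\bigl((f^{-1})^*\omega \wedge \alpha\bigr).
\]
Integrating over $M$ and combining parts (\ref{prop:normalization}) and (\ref{prop:pushcomp}) of Proposition~\ref{pr:print} with the projection $c: M \to pt$, for which $c \circ f^{-1} = c$, strips off the outer $(f^{-1})_*$ and yields
\[
\int_M \omega \wedge (f^{-1})_*\alpha = \int_M (f^{-1})^*\omega \wedge \alpha.
\]

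For the right-hand side, I would use the identity $f^*\bigl((f^{-1})^*\omega \wedge \alpha\bigr) = \omega \wedge f^*\alpha$ together with the classical orientation-sensitive change-of-variables formula $\int_M f^*\gamma = (-1)^{sgn(f)} \int_M \gamma$ for a top-degree form $\gamma$ on the oriented manifold $M$, converting the previous display into
\[
\int_M \omega \wedge (f^{-1})_*\alpha = (-1)^{sgn(f)} \int_M \omega \wedge f^*\alpha.
\]
Nondegeneracy of the pairing, applied with compactly supported $\omega$, then gives $(f^{-1})_*\alpha = (-1)^{sgn(f)} f^*\alpha$, which rearranges to the claim. The only subtlety is tracking the convention that $sgn(f)$ takes values in $\{0,1\}$ so that $(-1)^{sgn(f)}$ is the Jacobian sign; beyond this, no substantive obstacle arises, since everything reduces to formal manipulation of the four properties in Proposition~\ref{pr:print}.
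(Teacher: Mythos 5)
Your proof is correct but takes a genuinely different route from the paper. The paper argues directly at the level of forms: it first establishes $f_* 1 = (-1)^{sgn(f)}$ by restricting along the inclusion of a point via Proposition~\ref{pr:print}\ref{prop:pushfiberprod} and identifying the fiber as a single signed point, and then the identity drops out in one line from the projection formula together with $(f^{-1})_* f_* = \mathrm{Id}$, namely $f^*\alpha = f^{-1}_* f_*(f^*\alpha \wedge 1) = f^{-1}_*(\alpha \wedge f_*1) = (-1)^{sgn(f)} f^{-1}_*\alpha$. You instead verify the identity weakly by pairing both sides against compactly supported top-degree test forms: parts \ref{prop:normalization}--\ref{prop:pushpull} of the proposition collapse $\int_M\omega\wedge(f^{-1})_*\alpha$ into $\int_M(f^{-1})^*\omega\wedge\alpha$, the classical change-of-variables formula supplies the sign on the $f^*\alpha$ side, and nondegeneracy of the wedge-integration pairing finishes. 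Both arguments rest on the same underlying orientation fact; the paper isolates it as the orientation of a zero-dimensional fiber while you invoke it through the top-dimensional change of variables. The paper's version is shorter, stays entirely within the four stated push-forward properties plus one fiber-orientation computation, and produces an equality of forms directly; yours is also valid but imports two standard external facts (nondegeneracy of the integration pairing and classical change of variables) not listed in Proposition~\ref{pr:print}, and, interestingly, makes no use of part \ref{prop:pushfiberprod}, which is the crux of the paper's computation.
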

\begin{proof}
We begin by showing that $f_*1 = (-1)^{sgn(f)}.$ It suffices to show $g^* f_*1 = (-1)^{sgn(f)}$ for $g : P \to M$ the inclusion of an arbitrary point. Indeed, the fiber product $P \times_M M$ is the fiber $f^{-1}(g(P)),$ which consists of a single point with orientation $(-1)^{sgn(f)}.$ So,  Proposition~\ref{pr:print}~\ref{prop:normalization} and~\ref{prop:pushfiberprod} imply that $g^*f_*1 = \int_{f^{-1}(g(P))} 1 = (-1)^{sgn(f)}.$ Therefore, using Proposition~\ref{pr:print}~\ref{prop:pushcomp} and~\ref{prop:pushpull}, we obtain
\[
f^*\alpha = f^{-1}_* f_* (f^*\alpha \wedge 1) = f^{-1}_* (\alpha \wedge f_*1) = (-1)^{sgn(f)}f^{-1}_* \alpha.
\]
\end{proof}

\begin{pr}\label{prop:q_sgn_c}
We have
\[
\phi^*\m_{k,-\phi_*\de}^{\phi(L)}(\alpha_1,\ldots,\alpha_k)=
(-1)^{sgn(\tilde{\phi})+sgn(\phi|_L)+\spe{\tau}{\alpha} +\frac{k(k-1)}{2}}\m_{k,\de}^L(\phi^*\alpha_k,\ldots,\phi^*\alpha_1).
\]
\end{pr}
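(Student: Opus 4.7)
The plan is to start from the definition
\[
\m_{k,-\phi_*\de}^{\phi(L)}(\alpha_1,\ldots,\alpha_k) = (-1)^{\varepsilon(\alpha)} (ev_0^{-\phi_*\de})_*\left(\bigwedge_{j=1}^k (ev_j^{-\phi_*\de})^*\alpha_j\right),
\]
apply $\phi^*,$ and transport the $\phi$'s and $\tilde\phi$'s past the evaluation maps and the wedge product using the commutation relations~\eqref{eq:phev}. The geometric content is that~\eqref{eq:phev} lets us trade the ``boundary'' action of $\phi$ on $L$ for a ``domain'' action of $\tilde\phi$ on the moduli space, at the cost of reversing the cyclic order of the boundary marked points. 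Everything else is sign bookkeeping.

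Concretely, I would proceed in three moves. First, for the outermost operator, Lemma~\ref{lm:pp} rewrites $\phi^*$ as $(-1)^{sgn(\phi|_L)}(\phi|_L^{-1})_*$; combining push-forwards via Proposition~\ref{pr:print}~\ref{prop:pushcomp} and then applying~\eqref{eq:phev} in the form $\phi|_L^{-1}\circ ev_0^{-\phi_*\de} = ev_0^\de \circ \tilde\phi^{-1}$ converts the result into $(ev_0^\de)_*(\tilde\phi^{-1})_*$. A second use of Lemma~\ref{lm:pp}, now applied to $\tilde\phi,$ turns $(\tilde\phi^{-1})_*$ into $(-1)^{sgn(\tilde\phi)}\tilde\phi^*$. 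Second, pass $\tilde\phi^*$ inside the wedge: pull-back is multiplicative, and $(ev_j^{-\phi_*\de}\circ\tilde\phi)^* = (ev_{k+1-j}^\de)^*\phi^*$ by~\eqref{eq:phev}, so the wedge becomes $\bigwedge_{j=1}^k (ev_{k+1-j}^\de)^*\phi^*\alpha_j,$ which is the reverse-ordered version of the wedge appearing in the definition of $\m_{k,\de}^L(\phi^*\alpha_k,\ldots,\phi^*\alpha_1).$ Third, reorder that wedge into standard order using graded commutativity; the $j$-th factor has degree $|\alpha_{k+1-j}|,$ and a short index manipulation shows the reversal contributes $(-1)^{\sum_{i<j}|\alpha_i||\alpha_j|}.$

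Combining the three moves expresses $\phi^*\m_{k,-\phi_*\de}^{\phi(L)}(\alpha_1,\ldots,\alpha_k)$ as $\m_{k,\de}^L(\phi^*\alpha_k,\ldots,\phi^*\alpha_1)$ times a sign whose exponent, modulo $2,$ is $sgn(\phi|_L)+sgn(\tilde\phi)+\varepsilon(\alpha)-\varepsilon(\beta)+\sum_{i<j}|\alpha_i||\alpha_j|,$ where $\beta = (\phi^*\alpha_k,\ldots,\phi^*\alpha_1).$ The main obstacle, and essentially the only substantive step left, is verifying that this exponent matches $sgn(\tilde\phi)+sgn(\phi|_L)+\spe{\tau}{\alpha}+\frac{k(k-1)}{2}.$ Expanding $\spe{\tau}{\alpha}=\sum_{i<j}(|\alpha_i|+1)(|\alpha_j|+1) = \sum_{i<j}|\alpha_i||\alpha_j|+(k-1)\sum_j|\alpha_j|+\binom{k}{2}$ and noting that $\varepsilon(\alpha)-\varepsilon(\beta)\equiv (k+1)\sum_j(|\alpha_j|+1)\pmod 2$ reduces the verification to the elementary congruence $(k+1)\equiv(k-1)\pmod 2$ on the coefficient of $\sum_j|\alpha_j|,$ together with the cancellation of the remaining constants. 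I would record this identity in a single display to conclude.
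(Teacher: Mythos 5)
Your proposal is correct and uses essentially the same ingredients as the paper's proof: the commutation relations~\eqref{eq:phev}, two applications of Lemma~\ref{lm:pp} (to $\phi|_L$ and to $\tilde\phi$), graded commutativity for the wedge reordering, and the same elementary bookkeeping comparing $\varepsilon(\alpha)$, $\varepsilon(\phi^*\alpha_k,\ldots,\phi^*\alpha_1)$ and $\spe{\tau}{\alpha}$. The only difference is cosmetic — you run the chain of equalities starting from $\phi^*\m_{k,-\phi_*\de}^{\phi(L)}(\alpha_1,\ldots,\alpha_k)$ while the paper starts from $(ev_0^\de)_*\bigl(\wedge_j (ev_j^\de)^*\phi^*\alpha_{k+1-j}\bigr)$ and proceeds in the opposite direction.
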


\begin{proof}
Abbreviate $\de' = -\phi_*(\de).$ Use equation~\eqref{eq:phev} and Lemma~\ref{lm:pp} to calculate
\begin{align*}
(ev_0^\de)_*(\wedge_{j=1}^k (ev_j^\de)^*\phi^*\alpha_{k+1-j})&=
(ev_0^\de)_*(\wedge_{j=1}^k \tilde{\phi}^*(ev_{k+1-j}^{\de'})^*\alpha_{k+1-j})\\
&=(ev_0^\de)_*\tilde\phi^*(\wedge_{j=1}^k (ev_{k+1-j}^{\de'})^*\alpha_{k+1-j})\\
& = (-1)^{sgn(\tilde{\phi})}(ev_0^\de)_*\tilde\phi_*(\wedge_{j=1}^k (ev_{k+1-j}^{\de'})^*\alpha_{k+1-j})\\
& = (-1)^{sgn(\tilde{\phi})} \phi_*(ev_0^{\de'})_*(\wedge_{j=1}^k (ev_{k+1-j}^{\de'})^*\alpha_{k+1-j})\\
& = (-1)^{sgn(\tilde{\phi})+sgn(\phi)} \phi^* (ev_0^{\de'})_*(\wedge_{j=1}^k (ev_{k+1-j}^{\de'})^*\alpha_{k+1-j})\\
&=(-1)^{sgn(\tilde{\phi})+sgn(\phi)+\sum_{\substack{\! i<j\\\!\tau(i)>\tau(j)}}\!|\alpha_{\tau(i)}||\alpha_{\tau(j)}|}
\phi^* (ev_0^{\de'})_*(\wedge_{j=1}^k(evb_j^{\de'})^*\alpha_j).
\end{align*}
Furthermore,
\begin{align*}
\varepsilon(\phi^*\alpha_k,\ldots,\phi^*\alpha_1)&=
\sum_{j=1}^kj(|\alpha_{k+1-j}|+1)+1\\
&=\sum_{j=1}^k(k+1-j)(|\alpha_j|+1)+1\\
&\equiv (k+1)\sum_{j=1}^k(|\alpha_j|+1)+\varepsilon(\alpha)\\
&\equiv (k+1)|\alpha| + (k+1)k + \varepsilon(\alpha)\\
&\equiv (k+1)|\alpha| +\varepsilon(\alpha) \pmod 2.
\end{align*}
Note that
\[
\sum_{\substack{i<j\\\tau(i)>\tau(j)}}(|\alpha_{\tau(i)}|+|\alpha_{\tau(j)}|)=
\sum_{i< j}(|\alpha_i|+|\alpha_j|)=\sum_{i=1}^k(k-1)|\alpha_i|=(k-1)|\alpha|
\]
and
\[
\sum_{\substack{i<j\\\tau(i)>\tau(j)}} 1 =
\frac{k(k-1)}{2}.
\]
Therefore,
\[
\spe{\tau}{\alpha}=\sum_{\substack{i<j\\\tau(i)>\tau(j)}}|\alpha_{\tau(i)}||\alpha_{\tau(j)}|+(k-1)|\alpha|+\frac{k(k-1)}{2}
.
\]
The result follows.

\end{proof}

\subsection{Sign of the involution on the moduli space}
Recall that the Maslov class of a Lagrangian submanifold $L \subset X$ is a homomorphism
\[
\mu: H_2(X,L;\Z) \to \Z.
\]
See~\cite{CiG04} for a concise exposition.
\begin{pr}\label{prop:sgn_phit} The sign of $\tilde \phi : \M_{k+1}(L,\de) \to \M_{k+1}(\phi(L),-\phi_*\de)$ is given by
\[
sgn(\tilde\phi)
\equiv sgn(\phi|_L) +\frac{\mu(\de)}{2} +  k + 1 + \frac{k(k-1)}{2} \pmod 2.
\]
\end{pr}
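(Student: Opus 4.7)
The plan is to compute $sgn(\tilde\phi)$ by unpacking the induced isomorphism $\cL(\tilde\phi) : \cL_\bu \to \cL_{\tilde\phi(\bu)}$ at an irreducible stable map $\bu$ with $\Sigma \simeq D^2$. Recall $\cL_\bu = \det(D_\bu) \otimes \det\bigl(\bigoplus_{j=0}^k T_{z_j}\partial\Sigma\bigr)$, and the two tensor factors transform independently under $\tilde\phi$, so the total sign splits as a sum of two contributions modulo $2$.

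For the marked-point factor, write $z'_j = \psi_\Sigma^{-1}(z_j)$, so the new marked points are $(z'_0, z'_k, z'_{k-1}, \ldots, z'_1)$. Two things happen. First, $d\psi_\Sigma^{-1}$ sends $T_{z_j}\partial\Sigma$ to $T_{z'_j}\partial\overline\Sigma$ as real lines, but reverses orientation on each because $\psi_\Sigma$ conjugates the complex structure of $\Sigma$ and hence reverses the induced boundary orientation; this gives $(-1)^{k+1}$. Second, one must permute the resulting lines to match the new ordering. Since $z_0$ is fixed and $z_1, \ldots, z_k$ are reversed, this is the reversal permutation on $k$ letters, contributing $(-1)^{k(k-1)/2}$. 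Thus the marked-point factor contributes $(-1)^{k+1+k(k-1)/2}$.

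For the determinant-of-$D$ factor, factor the map as a composition $\det(D_\bu) \xrightarrow{\psi_{\det(D_\bu)}} \det(\overline{D_\bu}) \xrightarrow{\;\Phi\;} \det(D_{\tilde\phi(\bu)})$, where $\Phi$ is induced by an isomorphism of Cauchy-Riemann $Pin$ boundary value problems $\overline{\uD_\bu} \to \uD_{\tilde\phi(\bu)}$. Such an isomorphism exists because $\phi^*J = -J$ makes $d\phi : TX \to TX$ fiberwise anti-complex-linear, so $u^*d\phi$ is a complex-linear bundle isomorphism $\overline{u^*TX} \to (\phi\circ u)^*TX$ over the identity of $\overline\Sigma$, carrying the boundary condition $u^*TL$ to $(\phi\circ u)^*T\phi(L)$, and intertwining the two real linear Cauchy-Riemann operators. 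This isomorphism preserves $Pin$ structures by definition of the $Pin$ structure on $\phi(L)$; on the orientation of the boundary condition, it introduces the discrepancy between the pushforward orientation under $\phi|_L$ and the arbitrary chosen orientation on $\phi(L)$, namely $(-1)^{sgn(\phi|_L)}$. Since $\partial\Sigma$ has a single component, Proposition~\ref{pr:cobvp}\ref{it:iso}\ref{it:ro} gives $sgn(\Phi) = sgn(\phi|_L)$. By Proposition~\ref{pr:scbvp}, $sgn(\psi_{\det(D_\bu)}) = \mu(E_\bu,F_\bu)/2 = \mu(\de)/2$ (note the hypothesis that the Maslov class vanishes modulo $4$, together with $N_L \mid \mu$, is not needed here; we only need orientability of $F_\bu$, which follows from $\mu(\de)$ being even).

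Adding the two contributions gives the claimed formula
\[
sgn(\tilde\phi) \equiv sgn(\phi|_L) + \frac{\mu(\de)}{2} + k + 1 + \frac{k(k-1)}{2} \pmod 2.
\]
The main obstacle is identifying $\overline{\uD_\bu}$ with $\uD_{\tilde\phi(\bu)}$ as Cauchy-Riemann $Pin$ boundary value problems with care about whether the identification preserves the chosen orientations; everything else is bookkeeping using the properties of the canonical orientation recorded in Propositions~\ref{pr:cobvp} and~\ref{pr:scbvp}. Admissibility of $\tilde\phi$ in the sense of Section~\ref{ssec:vfc} holds because conjugation commutes with the $J$-holomorphic map equation after using $\phi^*J = -J$, and the virtual perturbations can be chosen compatibly with $\tilde\phi$.
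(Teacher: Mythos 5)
Your proof is correct and follows essentially the same route as the paper: factor $\cL(\tilde\phi)$ into the marked-point contribution (giving $(-1)^{k+1}$ from the $k{+}1$ orientation-reversals and $(-1)^{k(k-1)/2}$ from the permutation $\tau$) and the determinant contribution (factoring through the conjugate boundary value problem and then an isomorphism to $\uD_{\tilde\phi(\bu)}$, invoking Proposition~\ref{pr:scbvp} together with parts~\ref{it:ro} and~\ref{it:iso} of Proposition~\ref{pr:cobvp}); the paper's only stylistic difference is that it packages the orientation discrepancy into the notation $\overline\uD_\bu^{sgn(\phi)}$ so that $\uchi_{\bu,\phi}$ becomes orientation-preserving. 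One small slip in your aside: orientability of $F_\bu = u^*TL$ follows simply from $L$ being oriented, not from $\mu(\de)$ being even — the implication runs the other way (orientability of the boundary condition over a single boundary component forces the Maslov index to be even).
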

\begin{proof}
We claim the map $\tilde\phi$ is admissible in the sense of Section~\ref{ssec:vfc}. Indeed, identifying $D^2$ with $\overline{D^2}$ by complex conjugation, the same formula as in equation~\eqref{eq:tp} gives rise to a map $\tilde\phi_\B : \B_{k+1}(L,\de) \to \B_{k+1}(\phi(L),-\phi_*\de).$ Moreover, we can define $\tilde\phi_\E : \E_\bu \to \E_{\tilde\phi_\B(\bu)}$ to act on a section $\xi \in \Gamma(\Omega^{0,1}(u^*TX)) = \E_\bu$ by
\[
\tilde\phi_\E(\xi) = \psi_\Sigma^*\otimes(u^*d\phi)(\xi) \in \Gamma(\Omega^{0,1}((\phi\circ u \circ \psi_\Sigma)^*TX))) = \E_{\tilde\phi(\bu)}.
\]
It is clear that equation~\eqref{eq:BE} holds when $\CR$ is unperturbed, and it is feature of the virtual fundamental class techniques we consider that the relevant perturbations of $\CR$ can be chosen so that equation~\eqref{eq:BE} continues to hold.

We calculate the sign of the induced map
\[
\cL(\tilde \phi) : \cL_\bu \to \cL_{\tilde \phi(\bu)},
\]
as follows. Given a Cauchy-Riemann $Pin$ boundary value problem
\[
\uD = (\Sigma,E,F,\p,D)
\]
and $s \in \Z/2\Z,$ denote by $\uD^s$ the Cauchy-Riemann $Pin$ boundary value problem with all data identical except that the orientation of $F$ is reversed if $s \equiv 1 \pmod 2.$ Recall the definition of the conjugate Cauchy-Riemann $Pin$ boundary problem $\overline \uD$ and the canonical map
\[
\psi_{\det(D)} : \det(D) \to \det(\overline D)
\]
from Section~\ref{ssec:conj}. Denote by
\[
\psi_{\det(D)}^s : \det(D) \to \det(\overline D^s)
\]
the composition of $\psi_{\det(D)}$ with the tautological identification $\det(D) = \det(D^s).$ Recall the definition of the Cauchy-Riemann $Pin$ boundary value problem $\uD_\bu$ from Section~\ref{ssec:vfc}. Denote by
\[
\uchi_{\bu,\phi} : \overline\uD_{\bu}^{sgn(\phi)} \to \uD_{\tilde\phi(\bu)}
\]
the isomorphism given by
\[
\chi_{\bu,\phi} = \Id_{\overline \Sigma}, \qquad \hat \chi_{\bu,\phi} = (u\circ \psi_\Sigma)^*d\phi : \overline{u^*TX} \to (\phi\circ u \circ \psi_\Sigma)^* TX.
\]
Recalling the definition of $\cL_\bu$ in equation~\eqref{eq:Lu}, we see that $\cL(\tilde \phi)$ maps the factor $\det(D_\bu)$ in $\cL_\bu$ to the factor $\det(D_{\tilde \phi(\bu)})$ in $\cL_{\tilde \phi(\bu)}$ by the composition $\det(\uchi_{\bu,\phi})\circ \psi_{\det(D_\bu)}^{sgn(\phi)}$ and it maps the summand $T_{z_j}\partial \Sigma$ in $\cL_\bu$ to the summand $T_{\psi_\Sigma^{-1}(z_j)}\partial\overline\Sigma$ in $\cL_{\tilde \phi(\bu)}$ by $d(\psi_\Sigma^{-1})_{z_j}.$

Property~\ref{it:ro} of Proposition~\ref{pr:cobvp} and Proposition~\ref{pr:scbvp} imply that
\[
sgn\left(\psi_{\det(D_\bu)}^{sgn(\phi)}\right) = sgn(\phi|_L) + \frac{\mu(\de)}{2}.
\]
Property~\ref{it:iso} of Proposition~\ref{pr:cobvp} implies that $\det(\uchi_{\bu,\phi})$ preserves orientation. Since $\psi_\Sigma$ is anti-holomorphic,
\[
d(\psi_\Sigma^{-1})_{z_j} : T_{z_j}\partial \Sigma \to T_{\psi_\Sigma^{-1}(z_j)}\partial\overline\Sigma
\]
is orientation reversing for $j = 0,\ldots,k.$ Moreover, for $j = 1,\ldots,k,$ the ordering of the summands $T_{z_j}\partial \Sigma$ in $\cL_{\bu}$ differs from the ordering of the corresponding summands $T_{\psi^{-1}(z_j)}\partial\overline\Sigma$ in $\cL_{\tilde\phi(\bu)}$ by the permutation $\tau.$
It follows that
\begin{align*}
sgn(\tilde\phi) = sgn(\cL(\tilde\phi))
&\equiv sgn(\phi|_L)+ \frac{\mu(\de)}{2} +k+1 + sgn(\tau) \\
&= sgn(\phi|_L) + \frac{\mu(\de)}{2} + k + 1 + \frac{k(k-1)}{2} \pmod 2.
\end{align*}
\end{proof}

\begin{cy}\label{cy:sign}
We have
\[
\phi^*\m_{k,-\phi_*\de}^{\phi(L)}(\alpha_1,\ldots,\alpha_k)=
(-1)^{\frac{\mu(\de)}{2}+ k + 1 + \spe{\tau}{\alpha}}\m_{k,\de}^L(\phi^*\alpha_k,\ldots,\phi^*\alpha_1).
\]
\end{cy}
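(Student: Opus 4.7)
The plan is to derive Corollary~\ref{cy:sign} by simply combining the two preceding results, Proposition~\ref{prop:q_sgn_c} and Proposition~\ref{prop:sgn_phit}. All the substantive geometric and sign analysis has already been done: Proposition~\ref{prop:q_sgn_c} translates the push-forward/pull-back manipulation of differential forms through $\tilde\phi$ into a sign of the form $sgn(\tilde\phi) + sgn(\phi|_L) + \spe{\tau}{\alpha} + \tfrac{k(k-1)}{2}$, and Proposition~\ref{prop:sgn_phit} computes $sgn(\tilde\phi)$ itself in terms of $sgn(\phi|_L)$, the Maslov index $\mu(\de)/2$, and some combinatorial $k$-dependent terms.

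First, I would write down the sign exponent appearing in Proposition~\ref{prop:q_sgn_c}, namely
\[
S := sgn(\tilde\phi) + sgn(\phi|_L) + \spe{\tau}{\alpha} + \tfrac{k(k-1)}{2},
\]
and then substitute the formula
\[
sgn(\tilde\phi) \equiv sgn(\phi|_L) + \tfrac{\mu(\de)}{2} + k + 1 + \tfrac{k(k-1)}{2} \pmod 2
\]
from Proposition~\ref{prop:sgn_phit}. This yields
\[
S \equiv 2\,sgn(\phi|_L) + \tfrac{\mu(\de)}{2} + k + 1 + 2\cdot\tfrac{k(k-1)}{2} + \spe{\tau}{\alpha} \equiv \tfrac{\mu(\de)}{2} + k + 1 + \spe{\tau}{\alpha} \pmod 2,
\]
since the two $sgn(\phi|_L)$ contributions and the two copies of $\tfrac{k(k-1)}{2}$ cancel modulo $2$. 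Plugging this back into the identity of Proposition~\ref{prop:q_sgn_c} gives exactly the claimed formula.

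There is no real obstacle here; the corollary is a bookkeeping consequence of the two propositions and the only thing to check is that the mod $2$ arithmetic cancels correctly. Accordingly, the write-up should be short: state the combination, perform the parity computation in one or two lines, and invoke Propositions~\ref{prop:q_sgn_c} and~\ref{prop:sgn_phit}.
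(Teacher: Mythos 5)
Your proposal is correct and matches the paper's proof, which simply says ``Combine Propositions~\ref{prop:q_sgn_c} and~\ref{prop:sgn_phit}.'' The only difference is that you spell out the mod~$2$ cancellation of the two $sgn(\phi|_L)$ terms and the two $\tfrac{k(k-1)}{2}$ terms, which the paper leaves implicit.
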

\begin{proof}
Combine Propositions~\ref{prop:q_sgn_c} and~\ref{prop:sgn_phit}.
\end{proof}

\begin{rem}
Corollary~\ref{cy:sign} is a generalization of \cite[Theorem 1.5]{FO17a}, which treats the case where $L = Fix(\phi).$
\end{rem}

\begin{proof}[Proof of Theorem~\ref{tm:sign}]
Combine Corollary~\ref{cy:sign}, the assumption that the Maslov class vanishes modulo $4$ and the fact that $\omega(-\phi_*\de) = \omega(\de).$
\end{proof}
\begin{rem}\label{rem:phimodE}
As mentioned above in Remark~\ref{rem:modE}, the virtual fundamental class technique of Fukaya~\cite{Fu09a} provides a direct construction of $A_\infty$ algebras modulo $T^E,$ which are extended by an obstruction theory argument to a full $A_\infty$ algebra structure. In Appendix~\ref{sec:extmodE} we show how to carry out the obstruction theory argument so that Theorem~\ref{tm:sign} holds for the extension.
\end{rem}

\section{Homological perturbation theory}\label{sec:hpt}
\subsection{The abstract setting}\label{sec:hptb}
Let $(C,\m)$ be a $G$ gapped $A_\infty$ algebra, let $\overline D: = H^*(\overline C,\m_{1,0})$ and set $D = \overline D \otimes \Nov.$ Assume there exists a deformation retraction of $\overline C$ to $\overline D$ considered as complexes with differentials $\m_{1,0}$ and $0$ respectively. That is, assume there exist maps
\begin{equation}\label{eq:retd}
i : \overline D \to \overline C, \qquad p: \overline C \to \overline D, \qquad h: \overline C \to \overline C
\end{equation}
such that
\begin{gather}\label{eq:retp1}
p\circ \m_{1,0} = 0, \qquad \m_{1,0} \circ i = 0, \\
p \circ i = \Id, \qquad
\m_{1,0}\circ h + h \circ \m_{1,0} = i \circ p - \Id. \label{eq:retp2}
\end{gather}
In applications this can be accomplished, for example, using Hodge theory.
Let
\[
\I = \{(k,\beta)\,|\, k \in \Z_{\geq 0},\;\beta \in G,\;(k,\beta) \neq (0,0)\}.
\]
Thus, for any $E \in \R,$ the set of $(k,\beta) \in \I$ such that $k + \beta \leq E$ is finite.

For $(k,\beta) \in \I$ we define
\[
\i_{k,\beta}: D^{\otimes k} \to C
\]
by
\[
\i_{1,0} = i
\]
and when $(k,\beta) \neq (1,0),$
\[
\i_{k,\beta}(\alpha_1,\ldots,\alpha_k) = \sum_{r= 0}^\infty \sum_{\substack{s_1,\ldots,s_r\\\sum_j s_j = k}} \sum_{\substack{\beta_0,\ldots,\beta_r\\ \sum_j \beta_j = \beta \\(r,\beta_0) \neq (1,0)}} h(\m_{r,\beta_0}(\i_{s_1,\beta_1}(\alpha_1,\ldots,\alpha_{s_1}), \ldots, \i_{s_r,\beta_r}(\alpha_{k-s_r+1},\ldots,\alpha_k))).
\]
This formula is inductive because $s_j + \beta_j < k + \beta$ for $j = 1,\ldots,r.$
Similarly, we define
\[
\m^D_{k,\beta} : D^{\otimes k} \to D
\]
by
\[
\m_{1,0}^D = 0
\]
and when $(k,\beta) \neq (1,0),$
\begin{multline}\label{eq:hptm}
\m_{k,\beta}^D(\alpha_1,\ldots,\alpha_k) = \\
=\sum_{r = 0}^\infty \sum_{\substack{s_1,\ldots,s_r\\\sum_j s_j = k}} \sum_{\substack{\beta_0,\ldots,\beta_r\\ \sum_j \beta_j = \beta\\(r,\beta_0) \neq (1,0)}} p(\m_{r,\beta_0}(\i_{s_1,\beta_1}(\alpha_1,\ldots,\alpha_{s_1}), \ldots, \i_{s_r,\beta_r}(\alpha_{k-s_r+1},\ldots,\alpha_k)))
\end{multline}
Set
\[
\i_k = \sum_{\beta \in G} T^{\beta}\i_{k,\beta}\otimes \Id_\Nov, \qquad\qquad \m_k^D = \sum_{\beta \in G} T^{\beta}\m_{k,\beta}^D\otimes\Id_\Nov.
\]
The following can be proved by direction computation. See also~\cite{FO09,Se08}.
\begin{pr}\label{pr:hpt}
The operations $\{\m_k^D\}_{k\ge 0}$ define a weakly minimal $G$ gapped $A_\infty$ structure on~$D$.
Moreover, the maps $\i_k$ give a $G$ gapped $A_\infty$ homomorphism from $(D,\m_k^D)$ to $(C,\m_k).$
The map $\i_{1,0}$ induces an isomorphism of underlying algebras $A_D \to A_C$. In particular, the $A_\infty$ homomorphism $\i$ is a quasi-isomorphism.
\end{pr}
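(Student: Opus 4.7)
The plan is to prove the $A_\infty$ relations for $\m^D$ and the $A_\infty$ homomorphism property of $\i$ by simultaneous induction on $(k,\beta) \in \I$ with respect to a well-ordering in which $(k',\beta') < (k,\beta)$ whenever $k' + \beta' < k + \beta$, and then deduce the underlying-algebra and quasi-isomorphism statements. The base case $(k,\beta) = (1,0)$ is immediate: $\m^D_{1,0} = 0$ holds by definition, giving weak minimality, and $\i_{1,0} = i$ intertwines $\m^D_{1,0} = 0$ and $\m_{1,0}$ by the relation $\m_{1,0}\circ i = 0$ in~\eqref{eq:retp1}.

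For the inductive step, I would interpret the recursions as sums over decorated planar rooted trees: $\i_{k,\beta}$ is the sum over trees $T$ with $k$ ordered leaves and internal vertices of arity $r$ labeled by some $\beta_0 \in G$ satisfying $(r,\beta_0) \neq (1,0)$, with labels summing to $\beta$; the tree contributes the operator obtained by placing $i$ at each leaf, $\m_{r,\beta_0}$ at each internal vertex, $h$ on each internal edge, and $h$ at the root (omitted only when $(k,\beta) = (1,0)$). Then $\m^D_{k,\beta}$ is the analogous sum with $p$ in place of the top $h$. To verify~\eqref{eq:aif} for $\i$ at level $(k,\beta)$, I would apply the $A_\infty$ relation~\eqref{eq:Ggainf} for $\m$ at the root vertex of every contributing tree and then use the homotopy identity $\m_{1,0}\circ h + h\circ \m_{1,0} = i \circ p - \Id$ from~\eqref{eq:retp2} to split each resulting term along each internal edge into three pieces. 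The $\m_{1,0} h$ and $h\m_{1,0}$ pieces pair with the $(r,\beta_0) = (1,0)$ contributions of the neighboring vertices and cancel; the $-\Id$ piece contracts an internal edge and reproduces the right-hand side of~\eqref{eq:aif} via the inductive hypothesis for $\m^D$ at strictly smaller indices; and the $ip$ piece introduces an $i\circ p$ break in the middle of the tree, which, read from the cut down, is exactly the defining recursion~\eqref{eq:hptm} for some $\m^D_{k',\beta'}$. The relations $pi = \Id$, $p \m_{1,0} = 0$ and $\m_{1,0}i = 0$ from~\eqref{eq:retp1}--\eqref{eq:retp2} ensure the boundary contributions at the leaves and root vanish. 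The same tree expansion, with $p$ at the root in place of $h$, simultaneously produces the $A_\infty$ relation~\eqref{eq:Ggainf} for $\m^D_{k,\beta}$.

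The main obstacle will be a consistent sign calculation: the Koszul signs in~\eqref{eq:Ggainf} and~\eqref{eq:aif}, together with the reorderings needed to apply the homotopy identity along each internal edge, must conspire to produce the pairwise cancellation described above. I would handle this either by a finer induction on the number of internal vertices of the tree, or by invoking the operadic sign calculus for associahedra developed in~\cite[Chapter 5]{FO09} and~\cite[Section (1i)]{Se08}, where the signs are packaged once and for all via orientations on moduli of metric planar trees, so that the combinatorial cancellation above becomes manifest.

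The underlying-algebra and quasi-isomorphism statements then follow easily. Restricting~\eqref{eq:hptm} to $\beta = 0$ gives $\m^D_{2,0}(\alpha_1,\alpha_2) = p(\m_{2,0}(i\alpha_1, i\alpha_2))$ on representatives in $\overline D = H^*(\overline C,\m_{1,0})$, and by~\eqref{eq:retp1}--\eqref{eq:retp2} the maps $i$ and $p$ descend to mutually inverse isomorphisms between $\overline D$ and $H^*(\overline C,\m_{1,0})$. Comparing with the definition~\eqref{eq:m2p} of the underlying algebra, $\i_{1,0} = i$ induces an algebra isomorphism $A_D \to A_C$. The quasi-isomorphism of $\i$ is then immediate since $(\i_{1,0})_* = [i]$ realizes this same isomorphism.
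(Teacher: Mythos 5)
The paper offers no proof of Proposition~\ref{pr:hpt}; it merely observes that the claim "can be proved by direct computation" and refers to~\cite{FO09,Se08}. Your proposal reconstructs the standard homotopy-transfer argument that those references contain, and this is precisely what the paper is implicitly invoking, so you are following the intended route. The tree expansion (leaves carrying $i$, internal vertices carrying $\m_{r,\beta_0}$ with $(r,\beta_0)\neq(1,0)$, internal edges carrying $h$, and $h$ versus $p$ at the root distinguishing $\i$ from $\m^D$) is the right picture, and the cancellation mechanism via the $A_\infty$ relation at each vertex together with $\m_{1,0}h + h\m_{1,0} = ip - \Id$ along the edges is the correct engine of the induction. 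Your base case $(1,0)$ and the verification that $\m^D_{2,0}(\alpha_1,\alpha_2)=p\m_{2,0}(i\alpha_1,i\alpha_2)$ on $\overline D$, from which the isomorphism of underlying algebras $A_D \to A_C$ and the quasi-isomorphism of $\i$ follow, are also correct (one needs $\m_{1,0}\circ i=0$ to see that $\m_{2,0}(i\alpha_1,i\alpha_2)$ is a cocycle so that $[ip\gamma]=[\gamma]$ applies, which you have). Two small points worth tightening if you write this out: first, the phrasing that the $\m_{1,0}h$ and $h\m_{1,0}$ pieces "pair with the $(r,\beta_0)=(1,0)$ contributions of the neighboring vertices" is a little loose, since the trees by construction have no $(1,0)$-labeled internal vertices; the cleaner statement is that after applying the $A_\infty$ relation at a vertex, the resulting $\m_{1,0}$ factor combines with the adjacent $h$ via the homotopy identity. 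Second, you should also note, though it is immediate, that the $\m^D_{k,\beta}$ have the correct degree $2-k$ and that the collection is $G$-gapped. With those minor clarifications and the sign calculus you propose to import from~\cite{FO09,Se08}, your argument is complete.
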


If $\c$ is a $\k$ structured involution of $C$ such that $\m$ is $\c$ self-dual, Lemma~\ref{lm:psdm} gives $\m_{0,1} \circ \bc = \bc \circ \m_{0,1}.$ So, $\bc$ induces an involution $\hc$ of $\overline D,$ and tensoring $\hc$ with $\Id_\Nov$ gives rise to an involution $\tc$ of $D.$

\begin{lm}\label{lm:eqhpt}
Suppose there exists a $\k$ structured involution $\c : C \to C$ such that $\m$ is $\c$ self-dual. Suppose
\begin{equation}\label{eq:eqret}
\bc\circ i = i\circ \hc, \qquad \hc\circ p = p\circ \bc, \qquad \bc\circ h = h \circ \bc.
\end{equation}
Then $\i$ is $\tc,\c$ self-dual and $\m_k^D$ is $\hc$ self-dual.
\end{lm}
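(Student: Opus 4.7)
The plan is to prove both claims by simultaneous induction on $k+\beta$ using the pointwise characterizations of self-duality given in Lemmas~\ref{lm:psdm} and~\ref{lm:psdf}. Explicitly, I aim to show
\begin{equation*}
\i_{k,\beta}(\hc(\alpha_1),\ldots,\hc(\alpha_k)) = (-1)^{\spe{\tau}{\alpha} + k + 1}\bc(\i_{k,\beta}(\alpha_k,\ldots,\alpha_1))
\end{equation*}
and
\begin{equation*}
\m^D_{k,\beta}(\hc(\alpha_1),\ldots,\hc(\alpha_k)) = (-1)^{\spe{\tau}{\alpha} + k + 1}\hc(\m^D_{k,\beta}(\alpha_k,\ldots,\alpha_1))
\end{equation*}
for every pair $(k,\beta)$. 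The base case $(k,\beta)=(1,0)$ is immediate: $\spe{\tau}{(\alpha_1)} = 0$, the first identity becomes $i\circ \hc = \bc\circ i$, which is the first hypothesis of the lemma, while $\m^D_{1,0} = 0$ makes the second identity vacuous.

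For the inductive step, I would substitute $\hc(\alpha_i)$ into the recursive formulas for $\i_{k,\beta}$ and $\m^D_{k,\beta}$. The restriction $(r,\beta_0)\neq(1,0)$ in those sums guarantees $s_j+\beta_j < k+\beta$ for every inner factor $\i_{s_j,\beta_j}$, so the inductive hypothesis is available. The argument then proceeds in three stages. First, apply the inductive hypothesis inside each block to pass $\hc$ through $\i_{s_j,\beta_j}$, producing a $\bc$ outside each block while reversing the order of its arguments. Second, apply Lemma~\ref{lm:psdm}, namely the $\c$ self-duality of $\m$, to pull a single $\bc$ through $\m_{r,\beta_0}$ while reversing the order of the $r$ blocks. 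Third, use $\bc\circ h = h\circ \bc$ (for $\i_{k,\beta}$) or $\hc\circ p = p\circ \bc$ (for $\m^D_{k,\beta}$) to extract the involution past the outermost $h$ or $p$. Re-indexing the outer sum by $j\mapsto r+1-j$ then exhibits the result as $\bc$ (respectively $\hc$) applied to the recursive formula for $\i_{k,\beta}(\alpha_k,\ldots,\alpha_1)$ (respectively $\m^D_{k,\beta}(\alpha_k,\ldots,\alpha_1)$).

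The main obstacle is the sign computation. Writing $\alpha^{(j)}$ for the $j$th block of arguments (of length $s_j$), the three stages contribute, modulo $2$, the exponents $\sum_{j=1}^r \bigl(\spe{\tau}{\alpha^{(j)}} + s_j + 1\bigr)$ from the block-by-block inductive hypothesis and $\spe{\tau}{\gamma} + r + 1$ from the $\m$-self-duality, where $\gamma_j = \i_{s_j,\beta_j}(\alpha^{(j)}_{\text{rev}})$ has degree $\sum_{i\in\alpha^{(j)}}|\alpha_i| + 1 - s_j$. Showing that these combine to $\spe{\tau}{\alpha}+k+1$ reduces to the standard decomposition identity
\begin{equation*}
\spe{\tau}{\alpha} \equiv \sum_{j=1}^r \spe{\tau}{\alpha^{(j)}} + \sum_{1\leq j<j'\leq r}\Bigl(\sum_{i\in\alpha^{(j)}}(|\alpha_i|+1)\Bigr)\Bigl(\sum_{i\in\alpha^{(j')}}(|\alpha_i|+1)\Bigr) \pmod 2,
\end{equation*}
expressing the full reversal $\tau$ as the reversal of blocks followed by within-block reversals, together with the parity $\sum_{j=1}^r(s_j+1)\equiv k+r\pmod 2$ and the congruence $|\gamma_j|+1 \equiv \sum_{i\in\alpha^{(j)}}(|\alpha_i|+1) \pmod 2$. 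Once this combinatorial identity is in place, the inductive step is a mechanical verification and the lemma follows.
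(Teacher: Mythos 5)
Your proof is correct and is exactly the expansion the paper intends: the paper's proof consists of the single sentence ``Use Lemmas~\ref{lm:psdm} and~\ref{lm:psdf} and induction on $k+\beta$,'' and you have filled in precisely the inductive step, the use of the three compatibility hypotheses from~\eqref{eq:eqret}, and the sign bookkeeping via the block-decomposition identity for $\spe{\tau}{\cdot}$.
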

\begin{proof}
Use Lemmas~\ref{lm:psdm} and~\ref{lm:psdf} and induction on $k+\beta.$
\end{proof}

\subsection{Anti-symmetric Lagrangian submanifolds}
\begin{df}
A $G$ gapped $A_\infty$ algebra $(C,\m)$ with involution $\c$ is called \textbf{anti-symmetric} if the following conditions are satisfied.
\begin{itemize}
\item
The underlying algebra $A_C$ is isomorphic to the free graded commutative algebra on a graded vector space $V$ concentrated in odd degree.
\item
$\m$ is $\c$ self-dual.
\item
The homomorphism $\bc : A_C \to A_C$ is induced by $-\Id_V.$
\end{itemize}
\end{df}

\begin{pr}\label{pr:asl}
The Fukaya $A_\infty$ algebra of a $\phi$ anti-symmetric Lagrangian submanifold $L \subset X$ is anti-symmetric with respect to the involution given by pull-back of differential forms $\phi^*.$ Its underlying algebra is $H^*(L,\C).$
\end{pr}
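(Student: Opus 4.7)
The plan is to verify the three defining conditions of an anti-symmetric $A_\infty$ algebra for the Fukaya algebra $(C(L),\m^L)$ with involution $\c := \phi^*$, using Theorem~\ref{tm:sign}, Lemma~\ref{lm:m2}, and the hypotheses bundled into the definition of a $\phi$ anti-symmetric Lagrangian. No new geometric or analytic input is needed; the proof is essentially bookkeeping.

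First I would note that pull-back of differential forms by $\phi$ is a $\C$-structured involution on $C(L) = \overline C(L) \otimes \Nov$: it preserves grading, acts on $\overline C(L)$ and extends $\Nov$-linearly, and satisfies $(\phi^*)^2 = (\phi^2)^* = \Id$ because $\phi$ is an involution on $X$. The central step is then to deduce $\phi^*$ self-duality of $\m^L$ from Theorem~\ref{tm:sign}. Since $\phi(L)=L$ and the Maslov class of $L$ vanishes modulo $4$ by the definition of anti-symmetric Lagrangian, Theorem~\ref{tm:sign} applies with the target Lagrangian equal to $L$ itself and its conclusion is precisely the identity characterizing $\phi^* : (C(L),\m^{L,op}) \to (C(L),\m^L)$ as a strict $A_\infty$ homomorphism. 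This is exactly the second bullet in the definition.

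Next I would identify the underlying algebra $A_{C(L)}$. Since $\m_{1,0}^L = d$ is the de Rham differential on $\overline C(L)$, its cohomology is $H^*(L;\C)$. By Lemma~\ref{lm:m2}, $\m_{2,0}^L(\alpha_1,\alpha_2) = (-1)^{|\alpha_1|}\alpha_1 \wedge \alpha_2$, so the product~\eqref{eq:m2p} descends to the ordinary cup product on $H^*(L;\C)$. The first bullet of the $\phi$ anti-symmetric definition then identifies $A_{C(L)}$, as a graded algebra, with the free graded commutative algebra on a graded vector space $V$ concentrated in odd degree, and also gives the underlying algebra claim at the end of the proposition. Finally, the involution $\bc$ induced on $A_{C(L)} \cong H^*(L;\C)$ is, by construction, the pull-back $\phi^* : H^*(L;\C) \to H^*(L;\C)$, which the fourth bullet of the definition of $\phi$ anti-symmetric asserts equals the map induced by $-\Id_V$. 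This supplies the third condition.

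I do not anticipate a real obstacle: once Theorem~\ref{tm:sign} is in hand, the proposition is a collation of hypotheses. The only subtlety worth mentioning is making sure that the sign convention for $\m^{op}$ introduced in Section~\ref{ssec:ua} matches the sign produced by Theorem~\ref{tm:sign} — but ensuring exactly this match was the purpose of the sign calculation culminating in that theorem, so no further work is required.
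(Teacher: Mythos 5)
Your proof is correct and follows essentially the same route as the paper's: cite Theorem~\ref{tm:sign} for $\phi^*$ self-duality (valid since $\phi(L)=L$ and the Maslov class vanishes mod $4$), use $\m^L_{1,0}=d$ and Lemma~\ref{lm:m2} to identify $A_{C(L)}$ with $H^*(L;\C)$, and then read off the remaining conditions directly from the definition of a $\phi$ anti-symmetric Lagrangian. No discrepancy with the paper's argument.
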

\begin{proof}
Consider the $G_L$ gapped $A_\infty$ algebra $(C,\m) = (C(L),\m^L).$ The pull-back of differential forms $\phi^* : C \to C$ is a $\k$ structured involution and Theorem~\ref{tm:sign} asserts that $\m$ is $\phi^*$ self-dual. The definition $\m^L_{1,0} = d$ and Lemma~\ref{lm:m2} imply that the underlying algebra $A_C$ is isomorphic to $H^*(L,\C).$ So, by the definition of a $\phi$ anti-symmetric Lagrangian, $A_C$ is the free graded commutative algebra on a graded vector space $V$ concentrated in odd degree, and the involution $\phi^*$ acts on $A_C$ as required.
\end{proof}
\begin{pr}\label{pr:aslm}
The Fukaya $A_\infty$ algebra of a $\phi$ anti-symmetric Lagrangian submanifold $L \subset X$ is quasi-isomorphic to a weakly minimal anti-symmetric $A_\infty$ algebra with underlying algebra $H^*(L,\C)$ and involution the induced pull-back on cohomology $\phi^*.$
\end{pr}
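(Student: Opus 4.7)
The plan is to apply the homological perturbation machinery of Section~\ref{sec:hptb} to the Fukaya $A_\infty$ algebra $(C(L),\m^L)$ using a $\phi^*$-equivariant Hodge-theoretic deformation retraction. First I would construct this retraction. Averaging an arbitrary Riemannian metric on $L$ over the order-two group generated by $\phi|_L$ produces a metric $g$ for which $\phi|_L$ is an isometry. With respect to $g$, the Hodge star, codifferential, Laplacian, harmonic projection, and Green's operator all commute with $\phi^*.$ Identifying $\overline D = H^*(L;\C)$ with the space of $g$-harmonic forms inside $\overline C = A^*(L)$, take $i$ to be the inclusion of harmonic forms, $p$ the harmonic projection, and $h$ the usual Hodge-theoretic chain homotopy built from $d^*$ and the Green's operator. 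Ordinary Hodge theory supplies equations~\eqref{eq:retp1} and~\eqref{eq:retp2}, while $\phi|_L$-invariance of $g$ forces $i,p,h$ to commute with $\phi^*$; this is precisely condition~\eqref{eq:eqret} with $\bc = \phi^*$ and $\hc$ the induced pullback on cohomology.

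With the equivariant retraction in hand, I would invoke the transfer machinery. Proposition~\ref{pr:hpt} applied to $(i,p,h)$ yields a weakly minimal $G_L$ gapped $A_\infty$ structure $\m^D$ on $D = \overline D \otimes \Nov$ together with a $G_L$ gapped quasi-isomorphism $\i : (D,\m^D) \to (C(L),\m^L)$, and $\i_{1,0}$ induces an isomorphism of underlying algebras $A_D \to A_{C(L)}.$ By Proposition~\ref{pr:asl}, $A_{C(L)}$ is $H^*(L;\C)$, the free graded commutative algebra on the odd graded vector space $V$, so the same is true of $A_D.$

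To finish, I would verify the anti-symmetric axioms on $(D,\m^D)$ with involution $\tc$ obtained by extending the induced involution $\hc$ on $H^*(L;\C)$ to $D.$ Proposition~\ref{pr:asl} asserts that $\m^L$ is $\phi^*$ self-dual, so the hypotheses of Lemma~\ref{lm:eqhpt} are met, and the lemma gives both that $\m^D$ is $\tc$ self-dual and that $\i$ is $\tc,\phi^*$ self-dual. The algebra isomorphism of the previous paragraph intertwines $\hc$ with the $\phi^*$-action on cohomology, so the $\phi$ anti-symmetric hypothesis transfers and $\hc$ acts on $A_D$ as the map induced by $-\Id_V$ on the free graded commutative algebra. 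All three defining conditions of an anti-symmetric $A_\infty$ algebra are thereby verified. The only non-formal input in this argument is the existence of the equivariant deformation retraction; once the metric-averaging trick supplies it, the rest is a formal consequence of Propositions~\ref{pr:asl}, \ref{pr:hpt} and Lemma~\ref{lm:eqhpt}.
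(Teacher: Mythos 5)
Your proof is correct and follows essentially the same route as the paper: construct a $\phi|_L$-invariant metric (the paper simply posits one; you spell out the averaging), use Hodge theory to produce the equivariant deformation retraction $(i,p,h)$ satisfying condition~\eqref{eq:eqret}, then transfer via Proposition~\ref{pr:hpt} and Lemma~\ref{lm:eqhpt}, and finally check the anti-symmetric axioms from Proposition~\ref{pr:asl}. The only difference is that you elaborate a bit more on the Hodge-theoretic details and the final verification, both of which the paper leaves terse.
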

\begin{proof}
Take $(C,\m) = (C(L),\m^L).$ Proposition~\ref{pr:asl} asserts that $(C,\m)$ is $\phi^*$ anti-symmetric with underlying algebra $H^*(L,\C).$ Defining the $\k$ structured $\Nov$ vector space $D$ as in the beginning of Section~\ref{sec:hptb}, we have $D \simeq H^*(L,\C) \otimes \Nov$. The involution on $D$ induced by the pull-back of differential forms~$\phi^*,$ is the pull-back map on cohomology, which we also denote by $\phi^*$.
Construct $h,p,i,$ as in equations~\eqref{eq:retd}-\eqref{eq:retp2} using Hodge theory and a $\phi$ invariant metric on $L.$ It follows that condition~\eqref{eq:eqret} is satisfied with $\c = \phi^*.$ Define $\i$ and $\m^D$ as in Section~\ref{sec:hptb}. Proposition~\ref{pr:hpt} and Lemma~\ref{lm:eqhpt} imply that $(D,\m^D)$ is a $\phi^*$ self-dual weakly minimal $G$ gapped $A_\infty$ algebra quasi-isomorphic to $(C,\m)$ with the same underlying algebra.
\end{proof}

\begin{lm}\label{lm:fmbc}
Let $(C,\m)$ be a weakly minimal $\c$ anti-symmetric $A_\infty$ algebra. Then $\m$ is flat and minimal. Moreover, the space of bounding cochains is $\overline C^1 \otimes \Nov_+.$
\end{lm}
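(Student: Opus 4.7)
The plan is to exploit the $\c$ self-duality of $\m$ via Lemma~\ref{lm:psdm}, together with the rigid structure of $\overline C$. Since $\m$ is weakly minimal, $A_C = \overline C$ and so $\overline C$ is isomorphic to the exterior algebra $\Lambda V$ on a graded vector space $V$ concentrated in odd degree. The key algebraic input, which drives everything else, is the parity identity
\[
\bc(\alpha) = (-1)^{|\alpha|}\,\alpha \qquad \text{for all homogeneous } \alpha \in \overline C.
\]
Indeed, $\bc$ acts on $\Lambda^k V$ as $(-1)^k$ since it is induced by $-\Id_V$, and because $V$ is concentrated in odd degrees, every element of $\Lambda^k V$ has total degree congruent to $k$ modulo $2$. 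Once this identity is in hand, all three claims reduce to plugging small values of $k$ into Lemma~\ref{lm:psdm}.

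For flatness, $\m_{0,\beta}$ has degree $2$, so the parity identity gives $\bc(\m_{0,\beta}) = \m_{0,\beta}$. The $k = 0$ instance of Lemma~\ref{lm:psdm} (with empty input, trivial permutation sign) reads $\m_{0,\beta} = -\bc(\m_{0,\beta})$, forcing $\m_{0,\beta} = 0$. For minimality, the $k = 1$ instance of Lemma~\ref{lm:psdm} simplifies to $\m_{1,\beta}\circ \bc = \bc \circ \m_{1,\beta}$ (the total sign is $+1$). Evaluating on a homogeneous $\alpha$ and applying the parity identity to both sides produces opposite signs, because $|\m_{1,\beta}\alpha| = |\alpha| + 1$, and hence $\m_{1,\beta}(\alpha) = 0$.

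For the moduli of bounding cochains, observe first that $\overline C^1 = V^1$, since $1$ has no nontrivial expression as a sum of positive odd integers. Therefore $C^1 = V^1 \otimes \Nov$, and any $b \in C^1$ with $\|b\| < 1$ automatically lies in $\overline C^1 \otimes \Nov_+$ and satisfies $\c(b) = -b$ by the parity identity. Because $\m$ is flat and minimal, equation~\eqref{eq:mc} reduces to $\sum_{k \geq 2,\, \beta > 0} T^\beta \m_{k,\beta}(b^{\otimes k}) = 0$, and I will show each summand vanishes individually. Since $\m_{k,\beta}(b^{\otimes k})$ has degree $2$, the parity identity again gives $\bc$-invariance. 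Because $|b| + 1 = 2$, every pairwise term in $\spe{\tau}{b^{\otimes k}}$ (see~\eqref{eq:spe}) equals $4$, so $\spe{\tau}{b^{\otimes k}} \equiv 0 \pmod 2$. Lemma~\ref{lm:psdm} applied with all inputs equal to $b$ then yields $(-1)^k \m_{k,\beta}(b^{\otimes k}) = (-1)^{k+1}\m_{k,\beta}(b^{\otimes k})$, forcing the summand to vanish.

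The only real obstacle is tracking the Koszul signs in Lemma~\ref{lm:psdm} correctly; each of the three claims then follows by a short degree count together with the parity identity. In particular, the evenness of the permutation sign $\spe{\tau}{b^{\otimes k}}$ is what makes the bounding cochain equation collapse term by term.
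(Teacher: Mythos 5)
Your proof is correct and follows essentially the same approach as the paper: deduce the parity identity $\bc|_{\overline C^k}=(-1)^k$ from weak minimality and the hypotheses, kill $\m_0$ and $\m_1$ by a degree count against $\c$ self-duality, and then kill each $\m_{k,\beta}(b^{\otimes k})$ the same way, using $\spe{\tau}{b^{\otimes k}}\equiv 0$. One tiny slip: after flatness and minimality the Maurer--Cartan sum runs over $k\geq 2$ and \emph{all} $\beta\in G$ (the $\beta=0$ terms with $k\geq 2$ do not drop out a priori), but your term-by-term vanishing argument handles $\beta=0$ as well, so the conclusion is unaffected.
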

\begin{proof}
Weak minimality gives $A_C = \overline C.$ Since $A_C = \Lambda V$ for a graded vector space $V$ concentrated in odd degree, and $\bc$ acts on $A_C$ as the map induced by $-\Id_V,$ we have
\begin{equation}\label{eq:phip}
\bc|_{\overline C^k} =
\begin{cases}
\Id, & k = \text{even}, \\
-\Id, & k = \text{odd}.
\end{cases}
\end{equation}
Since $\m$ is $\c$ self-dual, we have
\begin{equation}\label{eq:sd01}
\c (\m_0) = -\m_0, \qquad \c (\m_1^D(\alpha)) = \m_1^D(\c(\alpha)).
\end{equation}
Combining equations~\eqref{eq:phip} and~\eqref{eq:sd01} with the fact that $\m_k$ has degree $2-k \pmod 2,$ we conclude
\begin{equation*}
\m_0 = 0, \qquad  \m_1 = 0.
\end{equation*}
That is, $\m$ is flat and minimal.

It remains to analyze the space of bounding cochains of $(C,\m).$ Let $b \in C$ with $|b| = 1$ and $\| b \| < 1.$ That is, $b \in \overline C \otimes \Nov_+.$ Then $|\m_k(b^{\otimes k})| \cong 0 \pmod 2$, so equation~\eqref{eq:phip} and the $\c$ self-duality of $\m$ imply
\[
\m_k(b^{\otimes k}) = \c(\m_k^D(b^{\otimes k})) = (-1)^{k+1} \m_k(\c(b)^{\otimes k}) = -\m_k(b^{\otimes k}).
\]
Thus, $\m_k(b^{\otimes k}) = 0$ for all $k$, and $b$ is a bounding cochain.
\end{proof}

\begin{cy}
A $\phi$ anti-symmetric Lagrangian submanifold $L \subset X$ is unobstructed and the space of bounding cochains is $H^1(L,\C)\otimes \Nov_+.$
\end{cy}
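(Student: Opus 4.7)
The plan is to reduce the statement to the weakly minimal model furnished by Proposition~\ref{pr:aslm} and then apply Lemma~\ref{lm:fmbc}, transporting both conclusions through the quasi-isomorphism via the invariance of the moduli space of bounding cochains.

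First, Proposition~\ref{pr:aslm} produces a weakly minimal $\phi^*$ anti-symmetric $A_\infty$ algebra $(D,\m^D)$ with $\overline D \simeq H^*(L,\C)$ that is quasi-isomorphic to the Fukaya $A_\infty$ algebra $(C(L),\m^L)$. Lemma~\ref{lm:fmbc} then immediately yields that $\m^D$ is flat and minimal, and that the set of bounding cochains in $D$ is precisely $\overline D^1\otimes \Nov_+ = H^1(L,\C)\otimes\Nov_+$. In particular, $0$ is a bounding cochain for $(D,\m^D)$, which gives unobstructedness of $(D,\m^D)$ directly.

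Next, the invariance of the moduli space of bounding cochains under quasi-isomorphism (Corollary~4.3.14 of~\cite{FO09}, recalled in Section~\ref{ssec:bc}) transports unobstructedness from $(D,\m^D)$ to $(C(L),\m^L)$, giving the first assertion of the corollary. It also puts the moduli spaces of bounding cochains of $(C(L),\m^L)$ and $(D,\m^D)$ into canonical bijection, so the remaining claim reduces to showing that the moduli space of bounding cochains of $(D,\m^D)$ coincides with the set $H^1(L,\C)\otimes \Nov_+$ itself; equivalently, that no two distinct elements of $H^1(L,\C)\otimes\Nov_+$ are gauge-equivalent in $(D,\m^D)$.

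This last step is the main obstacle. The natural approach is to apply $\tc$ to the defining equation
\[
b_1 - b_0 = \sum_{k_0,k_1 \geq 0}\m^D_{k_0+k_1+1}(b_0^{\otimes k_0}\otimes c \otimes b_1^{\otimes k_1})
\]
of gauge equivalence, exploiting that $\bc$ acts as $-\Id$ on $\overline D^{\mathrm{odd}}$ (hence on $b_0,b_1$) and as $+\Id$ on $\overline D^{\mathrm{even}}$ (hence on $c$), together with Lemma~\ref{lm:psdm}. A direct sign computation shows that the permutation sign $\spe{\tau}{\alpha}$ vanishes modulo $2$ in this situation because $c$ is the unique even entry. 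Combining these observations, one obtains the mirror relation
\[
b_1 - b_0 = \sum_{k_0,k_1 \geq 0}\m^D_{k_0+k_1+1}(b_1^{\otimes k_0}\otimes c \otimes b_0^{\otimes k_1}).
\]
Comparing this to the original gauge equivalence and inducting on the energy filtration provided by the $G$-gapping of $(D,\m^D)$ should force $b_0 = b_1$ order by order in $\beta$. Once this triviality of gauge equivalence is established, the corollary follows, with everything preceding it being a mechanical combination of results already in place.
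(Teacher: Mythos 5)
The first two paragraphs of your proposal reproduce the paper's proof exactly, which is simply ``combine Proposition~\ref{pr:aslm} and Lemma~\ref{lm:fmbc}.'' The third paragraph is where you part ways with the paper, and where your argument goes wrong.

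The ``mirror relation'' you derive has a sign error. Applying $\c$ to the gauge equation, using that $\c(b_i) = -b_i$ (odd degree), $\c(c) = c$ (even degree), and Lemma~\ref{lm:psdm} with $\spe{\tau}{\alpha} = 0$ since $c$ is the unique even entry, one finds
\[
\bc\bigl(\m_{k,\beta}(b_0^{\otimes k_0}\otimes c\otimes b_1^{\otimes k_1})\bigr)
=(-1)^{k+1}\,\m_{k,\beta}\bigl((\bc b_1)^{\otimes k_1}\otimes \bc c\otimes(\bc b_0)^{\otimes k_0}\bigr)
=\m_{k,\beta}(b_1^{\otimes k_1}\otimes c\otimes b_0^{\otimes k_0}),
\]
because the sign $(-1)^{k+1}=(-1)^{k_0+k_1}$ from self-duality cancels against the sign $(-1)^{k_0+k_1}$ from pulling $\c$ off the odd inputs. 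On the left side of the gauge equation, however, $\c(b_1)-\c(b_0) = -(b_1-b_0)$ does \emph{not} lose its sign. The correct result is therefore
\[
b_0 - b_1 = \sum_{k_0,k_1\geq 0}\m^D_{k_0+k_1+1}\bigl(b_1^{\otimes k_0}\otimes c \otimes b_0^{\otimes k_1}\bigr),
\]
not $b_1 - b_0 = \cdots$ as you wrote. This corrected identity together with the original gauge equation expresses merely that $c$ gauges $b_0$ to $b_1$ if and only if it gauges $b_1$ to $b_0$, i.e.\ that gauge equivalence is symmetric---a tautology of no use. There is no contradiction, and no induction on the energy filtration can extract $b_0 = b_1$ from it. The concluding ``should force $b_0 = b_1$ order by order'' is not justified even before noticing the sign problem.

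Note also that the statement of the corollary (and likewise of Lemma~\ref{lm:fmbc}) says ``the space of bounding cochains,'' not ``the moduli space of bounding cochains modulo gauge equivalence,'' and the proof of the lemma computes a set, not a quotient. So the author is not claiming triviality of the gauge action here; the extra step you are trying to supply is not actually part of the corollary's content as stated. For the stronger claim in Corollary~\ref{cy:main}\ref{it:uo} one would indeed need to control the gauge action, but the self-duality argument alone, even with the sign fixed, does not accomplish that.
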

\begin{proof}
Combine Proposition~\ref{pr:aslm} and Lemma~\ref{lm:fmbc}.
\end{proof}

\section{Hochschild cohomology for graded algebras}\label{sec:hh}
Most of this section is devoted to recalling the definition of Hochschild cohomology for graded algebras, and a few standard properties. Special attention is given to signs, on which subsequent proofs hinge. The sign convention we follow can be found, for example, in~\cite{FTV04}. Another helpful reference is~\cite{Her14}. The section culminates with the definition of anti-symmetric Hochschild cochains and Proposition~\ref{pr:ex}, which asserts that anti-symmetric Hochschild cocycles are exact.

\subsection{Graded modules}
Let $\k$ be a field. Given a graded $\k$ vector space $V$, we denote by $V[k]$ the same vector space with grading shifted by $k.$ That is, $V[k]^l = V^{k+l}.$

Let $A$ be a graded $\k$-algebra and let $M,N,$ be graded left $A$ modules. A map $f : M \to N$ is a homomorphism of left $A$ modules of degree $k$ if it is homomorphism of graded groups of degree $k$ and for all $a \in A$ and $m \in M,$ we have $f(am) = (-1)^{k|a|}af(m).$

Let $M,N,$ be differential graded left $A$ modules with differentials $d_M$ and $d_N$ respectively. Then the space $Hom_A(M,N)$ is naturally a differential graded left $A$ module with differential $d$ given on a homomorphism $f : M \to N$ of degree $|f|$ by the formula
\[
(df)(m) = d_N(f(m)) - (-1)^{|f|}f(d_M(m)).
\]

\subsection{Hochschild cohomology}
In the following, all tensor products will be taken over $\k.$
The \textbf{Hochschild cochain complex} of a $\k$ algebra $A$ is defined to be
\[
CC^*(A) = Hom_\k(\oplus_{k = 0}^{\infty}A[1]^{\otimes k},A)
\]
with the differential $\b : CC^*(A) \to CC^*(A)[1]$ given on a map $\eta : A[1]^{\otimes k} \to A$ by the formula
\begin{align}\label{eq:hcb}
\b(\eta)(\alpha_1,\ldots,\alpha_{k+1})
& = (-1)^{|\eta|(|\alpha_1|+1)+1 }\alpha_1\eta(\alpha_2 \otimes \cdots \otimes \alpha_{k+1}) + \\
&\qquad + \sum_{i = 1}^k (-1)^{|\eta| + 1 + \sum_{j = 1}^i(|\alpha_j|+1)}\eta(\alpha_1\otimes \cdots \otimes \alpha_i \alpha_{i+1} \otimes \cdots \otimes \alpha_{k+1}) + \notag\\
& \qquad + (-1)^{|\eta| + \sum_{j = 1}^{k}(|\alpha_j|+1)} \eta(\alpha_1 \otimes \cdots \otimes \alpha_k) \alpha_{k+1}. \notag
\end{align}
One motivation for the signs in the definition of $\b$ is Lemma~\ref{lm:Aib}, which shows these signs arise from the Koszul convention for the Hochschild differential of an $A_\infty$ algebra.
The \textbf{Hochschild cohomology} of $A$ is defined by
\[
HH^*(A) = H^*(CC^\bullet(A),\b).
\]

Recall the definition of the opposite algebra from~\eqref{eq:opga}. The \textbf{bar resolution} $BA$ is the free resolution of $A$ considered as a left $A \otimes A^{op}$ module given as vector space by
\[
BA = \bigoplus_{k = 0}^\infty  A \otimes A[1]^{\otimes k} \otimes A.
\]
The left $A\otimes A^{op}$ module structure of $BA$ is given by
\[
(x \otimes y) \cdot (\alpha_0\otimes \alpha_1 \otimes \cdots \otimes \alpha_{k+1}) = (-1)^{|y|(k + \sum_{j = 0}^{k+1} |\alpha_j|)}x\alpha_0 \otimes \alpha_1 \otimes \cdots \otimes \alpha_{k+1}y.
\]
The differential $b : BA \to BA$ is the left $A \otimes A^{op}$ module homomorphism of degree $1$ given by
\begin{multline*}
b(\alpha_0 \otimes \alpha_1 \otimes \cdots \otimes \alpha_{k+1}) =  \\
=\sum_{i = 0}^{k-1} (-1)^{i + \sum_{j = 0}^{i}|\alpha_j|} \alpha_0\otimes \cdots \otimes \alpha_i \alpha_{i+1} \otimes \cdots \otimes \alpha_{k+1}
- (-1)^{k-1 + \sum_{j = 0}^{k-1}|\alpha_j|} \alpha_0 \otimes \cdots \otimes \alpha_k \alpha_{k+1}.
\end{multline*}
The map $BA \to A$ is given by the multiplication map on $A \otimes A$ and zero on all other summands.

Given a map of graded $\k$ vector spaces
\begin{equation}\label{eq:eta}
\eta : A[1]^{\otimes k} \to A
\end{equation}
of homogeneous degree $|\eta|,$ denote by
\begin{equation}
\tilde \eta: A \otimes A[1]^{\otimes k} \otimes A  \to A,
\end{equation}
the $A\otimes A^{op}$ module homomorphism of the same degree $|\eta|$ given by
\begin{equation}\label{eq:eeb}
\tilde \eta(\alpha_0 \otimes \cdots \otimes \alpha_{k+1}) = (-1)^{|\alpha_0||\eta|}\alpha_0\eta(\alpha_1 \otimes \cdots \otimes \alpha_k) \alpha_{k+1}.
\end{equation}
\begin{lm}\label{lm:hhbr}
The map
\[
 r: CC^*(A) \lrarr Hom_{A\otimes A^{op}}(BA,A).
\]
given by $\eta \mapsto \tilde \eta$ is an isomorphism of complexes.
\end{lm}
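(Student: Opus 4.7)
The plan is to establish the statement in two stages: first verify that $r$ is a bijection of graded vector spaces, and then verify that $r$ intertwines the differentials.

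For bijectivity, note that the bar complex $BA$ is freely generated as a left $A \otimes A^{op}$ module by the elements $1 \otimes \alpha_1 \otimes \cdots \otimes \alpha_k \otimes 1$. Therefore a bimodule homomorphism $f : BA \to A$ is uniquely determined by its values on these generators, and conversely any graded $\k$-linear map $\eta : A[1]^{\otimes k} \to A$ extends via \eqref{eq:eeb} to a bimodule homomorphism $\tilde \eta$; one checks directly that $\tilde\eta$ is $A \otimes A^{op}$-linear using the Koszul sign $(-1)^{|\alpha_0||\eta|}$ in \eqref{eq:eeb} together with the sign in the bimodule action. The inverse of $r$ is the restriction $f \mapsto \eta_f$ with $\eta_f(\alpha_1, \ldots, \alpha_k) = f(1 \otimes \alpha_1 \otimes \cdots \otimes \alpha_k \otimes 1)$.

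For the chain map property, the formula for the differential on $Hom$ recalled above specializes, since $A$ has trivial differential, to $(d\tilde\eta)(x) = -(-1)^{|\eta|} \tilde\eta(b(x))$. Both $\widetilde{\b\eta}$ and $d\tilde\eta$ are bimodule homomorphisms, so it suffices to check agreement on the generator $x_k = 1 \otimes \alpha_1 \otimes \cdots \otimes \alpha_{k+1} \otimes 1$. By \eqref{eq:eeb}, the left side equals $\b\eta(\alpha_1, \ldots, \alpha_{k+1})$. The right side, after expanding $b(x_k)$ into its $k+2$ summands indexed by which adjacent pair is multiplied and then applying $\tilde\eta$, splits into three types of terms: when $i=0$ or $i=k+1$, a bar entry is absorbed into the outer $A$-factor, and after using the bimodule property of $\tilde\eta$ to move the absorbed element outside we obtain $\alpha_1 \eta(\alpha_2, \ldots, \alpha_{k+1})$ and $\eta(\alpha_1, \ldots, \alpha_k) \alpha_{k+1}$ respectively; for $1 \leq i \leq k$ we directly obtain $\eta(\alpha_1, \ldots, \alpha_i \alpha_{i+1}, \ldots, \alpha_{k+1})$. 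These are exactly the three summand types appearing in \eqref{eq:hcb}.

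The main obstacle is matching signs term by term. Each term carries a contribution $-(-1)^{|\eta|}$ from the definition of $d$, a contribution $(-1)^{i + \sum_{j \leq i}|\alpha_j|}$ from $b$, and, in the $i=0$ case, an additional $(-1)^{|\alpha_1||\eta|}$ from reapplying \eqref{eq:eeb} after absorbing $\alpha_1$ into the outer factor. Because $\alpha_0 = 1$ has degree zero, $\sum_{j \leq i}|\alpha_j|$ reduces to $\sum_{j=1}^i |\alpha_j|$. Combining these contributions in each of the three cases reproduces, respectively, the signs $(-1)^{|\eta|(|\alpha_1|+1)+1}$, $(-1)^{|\eta|+1+\sum_{j=1}^i(|\alpha_j|+1)}$, and $(-1)^{|\eta|+\sum_{j=1}^k(|\alpha_j|+1)}$ appearing in \eqref{eq:hcb}. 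This verification is the only nontrivial content of the lemma, and is a routine exercise in the Koszul convention.
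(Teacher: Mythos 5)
Your overall strategy coincides with the paper's: establish bijectivity of $r$ and then verify that $r$ intertwines $\b$ with $b^*$, expanding $b$ and applying $\tilde\eta$ term by term. The one genuine difference in approach is that you check the chain‑map identity only on the $A\otimes A^{op}$‑module generators $1\otimes\alpha_1\otimes\cdots\otimes\alpha_{k+1}\otimes 1$, whereas the paper evaluates on a general element $\alpha_0\otimes\cdots\otimes\alpha_{k+2}$. Your shortcut is valid precisely because both $\widetilde{\b\eta}$ and $b^*\tilde\eta = -(-1)^{|\eta|}\,\tilde\eta\circ b$ are $A\otimes A^{op}$‑module homomorphisms of the same degree, and $BA$ is free on those generators; stating this explicitly would make the reduction airtight.

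There is a sign imprecision you should fix. You describe the contribution from $b$ uniformly as $(-1)^{i+\sum_{j\le i}|\alpha_j|}$, but the paper's bar differential singles out the final term: for bar degree $n$ (i.e.\ acting on $\beta_0\otimes\cdots\otimes\beta_{n+1}$), the term that multiplies $\beta_n\beta_{n+1}$ carries the sign $-(-1)^{n-1+\sum_{j=0}^{n-1}|\beta_j|} = (-1)^{n+\sum_{j=0}^{n-1}|\beta_j|}$, \emph{not} the naive extension $(-1)^{n+\sum_{j=0}^{n}|\beta_j|}$; the two differ by $(-1)^{|\beta_n|}$. Applied to your generator with $n=k+1$, $\beta_0=\beta_{k+2}=1$ and $\beta_j=\alpha_j$, the correct $b$‑sign on the last term is $(-1)^{k+1+\sum_{j=1}^{k}|\alpha_j|}$, which together with the $(-1)^{|\eta|+1}$ from $d$ does give your claimed final sign $(-1)^{|\eta|+\sum_{j=1}^{k}(|\alpha_j|+1)}$. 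However, the uniform formula you wrote (with any reasonable choice of $i$ for the last case) would be off by $(-1)^{|\alpha_{k+1}|}$, so your derivation and your claimed answer are inconsistent as written. Since you yourself note that the sign matching is the only nontrivial content of the lemma, the writeup should either quote the last term of $b$ with its separate sign or index it carefully rather than subsuming it into a single formula.
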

\begin{proof}
It is straightforward to show that $r$ is an isomorphism of graded vector spaces. Denote by $b^*$ the differential on $Hom_{A\otimes A^{op}}(BA,A)$ induced by $b.$ It remains to show that
\[
r(\b(\eta)) = b^*(r(\eta)).
\]
Indeed, let $\eta$ and $\tilde \eta$ be as in equations~\eqref{eq:eta}-\eqref{eq:eeb}, so $\tilde \eta = r(\eta).$ Then
\begin{align*}
b^*(r(\eta)&)(\alpha_0\otimes \alpha_1\otimes\cdots\otimes \alpha_{k+1} \otimes \alpha_{k+2}) = \\
&= (-1)^{|\eta| + 1}\tilde \eta(b(\alpha_0 \otimes \alpha_1 \otimes \cdots \otimes \alpha_{k+1} \otimes \alpha_{k+2})) \\
&= (-1)^{|\eta| + 1+|\alpha_0|}\tilde \eta(\alpha_0\alpha_1 \otimes \cdots \otimes \alpha_{k+1} \otimes \alpha_{k+2}) + \mbox{} \\
& \qquad + \sum_{i = 1}^{k} (-1)^{|\eta| + 1 + i + \sum_{j = 0}^i|\alpha_j|}\tilde\eta(\alpha_0 \otimes \alpha_1 \otimes \cdots \otimes \alpha_i \alpha_{i+1} \otimes \cdots \otimes \alpha_{k+1} \otimes \alpha_{k+2})  + \mbox{}\\
& \qquad - (-1)^{|\eta| + 1 +  k + \sum_{j = 0}^{k}|\alpha_j|}\tilde\eta(\alpha_0 \otimes \alpha_1 \otimes \cdots \otimes \alpha_{k+1}\alpha_{k+2}) \\
& = (-1)^{(|\eta|+1)|\alpha_0| + |\eta|(|\alpha_1|+1)+1 }\alpha_0\alpha_1 \eta(\alpha_2 \otimes \cdots \otimes \alpha_{k+1})\alpha_{k+2} + \mbox{}\\
&\qquad + \sum_{i = 1}^k (-1)^{(|\eta|+1)|\alpha_0| + |\eta| + 1 + \sum_{j = 1}^i(|\alpha_j|+1)}\alpha_0\eta(\alpha_1\otimes \cdots \otimes \alpha_i \alpha_{i+1} \otimes \cdots \otimes \alpha_{k+1})\alpha_{k+2} + \mbox{}\\
& \qquad + (-1)^{(|\eta|+1)|\alpha_0|+ |\eta| + \sum_{j = 1}^{k}(|\alpha_j|+1)}\alpha_0\eta(\alpha_1 \otimes \cdots \otimes \alpha_k) \alpha_{k+1}\alpha_{k+2} \\
& = (-1)^{(|\eta|+1)|\alpha_0|}\alpha_0\b(\eta)(\alpha_1 \otimes \cdots \otimes \alpha_{k+1})\alpha_{k+2} \\
& = r(\b(\eta))(\alpha_0\otimes \alpha_1\otimes\cdots\otimes \alpha_{k+1} \otimes \alpha_{k+2}).
\end{align*}
\end{proof}

Given an isomorphism $h : A_1 \to A_2,$ denote by
\[
h^* : CC^*(A_2) \to CC^*(A_1),
\]
the map that acts on a cochain $\eta : A_2[1]^{\otimes k} \to A_2$ by the formula
\[
h^*\eta(\alpha_1,\ldots,\alpha_k) = h^{-1}(\eta(h(\alpha_1),\ldots,h(\alpha_k))), \qquad \alpha_1,\ldots,\alpha_k \in A_1.
\]
The following lemma is immediate.
\begin{lm}\label{lm:nat}
If $h : A_1 \to A_2$ is a $\k$ algebra homomorphism, then $h^* : CC^*(A_2) \to CC^*(A_1)$ is a chain map.
\end{lm}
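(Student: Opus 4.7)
The claim will follow from a direct comparison of $h^*(\b\eta)$ and $\b(h^*\eta)$ term by term, and the only real content is keeping track of the fact that $h$ preserves products and degrees.

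The plan is the following. Fix a cochain $\eta : A_2[1]^{\otimes k} \to A_2$ of degree $|\eta|$ and elements $\alpha_1,\ldots,\alpha_{k+1} \in A_1$. First I would expand $\b(h^*\eta)(\alpha_1,\ldots,\alpha_{k+1})$ using formula~\eqref{eq:hcb}, obtaining three groups of terms: a boundary term $\alpha_1\cdot (h^*\eta)(\alpha_2,\ldots,\alpha_{k+1})$, the internal sum with $\alpha_i\alpha_{i+1}$, and the boundary term $(h^*\eta)(\alpha_1,\ldots,\alpha_k)\cdot\alpha_{k+1}$, each carrying the prescribed sign. Then I would substitute
\[
(h^*\eta)(\beta_1,\ldots,\beta_k) = h^{-1}\bigl(\eta(h(\beta_1),\ldots,h(\beta_k))\bigr)
\]
into each group.

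Next I would push $h$ inside using that $h$ is a $\k$-algebra homomorphism of degree $0$. For the internal terms, $h(\alpha_i\alpha_{i+1}) = h(\alpha_i)h(\alpha_{i+1})$ converts $h^*\eta(\ldots,\alpha_i\alpha_{i+1},\ldots)$ into $h^{-1}\bigl(\eta(\ldots,h(\alpha_i)h(\alpha_{i+1}),\ldots)\bigr)$. For the two boundary terms, using that $h^{-1}$ is also an algebra homomorphism, one rewrites
\[
\alpha_1\cdot h^{-1}\bigl(\eta(h(\alpha_2),\ldots,h(\alpha_{k+1}))\bigr) = h^{-1}\bigl(h(\alpha_1)\,\eta(h(\alpha_2),\ldots,h(\alpha_{k+1}))\bigr),
\]
and symmetrically for the other boundary term. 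Because $|h(\alpha_j)| = |\alpha_j|$ for all $j$, every sign in~\eqref{eq:hcb} applied with $\alpha$'s matches the corresponding sign applied with $h(\alpha)$'s, so the three groups assemble to
\[
\b(h^*\eta)(\alpha_1,\ldots,\alpha_{k+1}) = h^{-1}\bigl((\b\eta)(h(\alpha_1),\ldots,h(\alpha_{k+1}))\bigr) = h^*(\b\eta)(\alpha_1,\ldots,\alpha_{k+1}),
\]
which is the desired identity.

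There is no real obstacle here; the lemma is purely bookkeeping, and the only thing to verify carefully is that the signs on each of the three classes of terms on both sides of the equation agree, which is automatic from $h$ having degree $0$. Alternatively, one could avoid the calculation entirely by invoking Lemma~\ref{lm:hhbr} and observing that $h$ induces a chain map $BA_1 \to BA_2$ of $A_i\otimes A_i^{op}$ bimodule resolutions intertwining the augmentation maps to $A_i$, whence $h^*$ corresponds to the induced map on $Hom_{A_i\otimes A_i^{op}}(BA_i, A_i)$; but the direct verification above is more elementary.
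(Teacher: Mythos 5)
Your direct verification is correct and is precisely the "immediate" computation the paper has in mind (the paper states Lemma~\ref{lm:nat} without proof). The key observations—that the signs in~\eqref{eq:hcb} depend only on degrees, which $h$ preserves, and that pushing the products through $h$ and $h^{-1}$ uses that both are algebra homomorphisms—are exactly the ones needed, and the alternative via Lemma~\ref{lm:hhbr} and the bar resolution is a valid, if heavier, route.
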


Denote by $t_A : CC^*(A) \to CC^*(A^{op})$ the map that acts on a cochain $\eta : A[1]^{\otimes k} \to A$ by the formula
\[
t_A(\eta)(\alpha_1,\ldots,\alpha_k) = (-1)^{\spe{\tau}{\alpha}+ k +1}\eta(\alpha_k,\ldots,\alpha_1), \qquad \alpha_1,\ldots, \alpha_k \in A.
\]
\begin{lm}\label{lm:t}
$t_A$ is a chain map.
\end{lm}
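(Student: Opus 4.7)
The plan is a direct, term-by-term verification that $t_A(\b \eta) = \b^{op}(t_A \eta)$ for every Hochschild cochain $\eta : A[1]^{\otimes k} \to A$, where $\b^{op}$ denotes the Hochschild differential on $CC^*(A^{op})$, i.e.\ formula~\eqref{eq:hcb} with $\cdot$ replaced by $\cdot^{op}$. Both sides, evaluated on a tuple $(\alpha_1,\ldots,\alpha_{k+1})$, produce three kinds of summands: an outer-left term, $k$ inner ``merged-product'' terms, and an outer-right term. I expect these to match in pairs under the order-reversing reindexing $i \mapsto k+1-i$.

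First I would compute $t_A(\b\eta)(\alpha_1,\ldots,\alpha_{k+1})$ by applying the definition of $t_A$ to obtain $(-1)^{\spe{\tau}{\alpha}+k+2}\b\eta(\alpha_{k+1},\ldots,\alpha_1)$ and then expand via~\eqref{eq:hcb}. Next I would compute $\b^{op}(t_A\eta)(\alpha_1,\ldots,\alpha_{k+1})$ by expanding via the opposite analogue of~\eqref{eq:hcb}, rewriting each opposite product $a\cdot^{op}b = (-1)^{|a||b|}ba$, and unfolding $t_A\eta$ on each resulting subtuple. Under $i \mapsto k+1-i$, the $i$-th inner term of one expansion matches the $(k+1-i)$-th inner term of the other, since both involve the same product $\alpha_{k+2-i}\alpha_{k+1-i}$ sitting inside $\eta$ applied to the reversed list. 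Likewise, the outer-left term of $\b^{op}(t_A\eta)$ corresponds, after moving $\alpha_1$ past $t_A\eta(\alpha_2,\ldots,\alpha_{k+1})$ using $\cdot^{op}$, to the outer-right term of $t_A(\b\eta)$, and symmetrically.

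The bulk of the work, and the principal obstacle, lies in the sign bookkeeping. For the inner terms one must compute the discrepancy between $\spe{\tau}{\alpha_1,\ldots,\alpha_{k+1}}$ and $\spe{\tau}{\alpha_1,\ldots,\alpha_i\alpha_{i+1},\ldots,\alpha_{k+1}}$ in closed form; from~\eqref{eq:spe} this difference is $(|\alpha_i|+1)(|\alpha_{i+1}|+1)$ plus a Koszul-type contribution coming from moving the merged pair across the other entries, and the parities of the index shifts $k\mapsto k+1$ in the $+k+1$ summands in $t_A$. These terms conspire with the extra $(-1)^{|\alpha_i||\alpha_{i+1}|}$ from $\cdot^{op}$ and the $\sum_{j=1}^i(|\alpha_j|+1)$ sign inside $\b$ to cancel. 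For the outer terms the bookkeeping is shorter, involving only the Koszul sign of commuting $\alpha_1$ (resp.\ $\alpha_{k+1}$) past the degree of $\eta$ and of the remaining $\alpha_j$'s.

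An alternative route, should the direct computation prove unwieldy, is to interpret the statement via the bar resolution of Lemma~\ref{lm:hhbr}: the bar resolution of $A^{op}$ can be identified with the bar resolution of $A$ read in reverse order (with appropriate Koszul signs), under which $t_A$ becomes induced by this reversal. The fact that $t_A$ is a chain map then follows from the fact that the reversal is a chain isomorphism of resolutions. However, the sign verification needed to establish that identification of bar resolutions is essentially the same computation, so I would default to the direct approach unless it becomes unwieldy.
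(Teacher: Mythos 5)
Your proposal is correct and matches the paper's approach: the proof is a direct, one-step expansion of $\b(t_A(\eta))(\alpha_1,\ldots,\alpha_{k+1})$ via~\eqref{eq:hcb}, with all products read in $A^{op}$, followed by a reindexing (outer-left $\leftrightarrow$ outer-right, inner $i \leftrightarrow k+1-i$) and sign collection that produces $t_A(\b(\eta))(\alpha_1,\ldots,\alpha_{k+1})$. Your diagnosis of where the sign bookkeeping lives — the $(-1)^{|\alpha_i||\alpha_{i+1}|}$ from $\cdot^{op}$, the difference between $\spe{\tau}{\alpha}$ before and after merging adjacent entries, and the Koszul sign from commuting $\alpha_1$ or $\alpha_{k+1}$ past the rest — is exactly the computation the paper carries out.
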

\begin{proof}
Recalling the definition of the Hochschild differential~\eqref{eq:hcb}, the opposite algebra~\eqref{eq:opga} and the permutation sign~\eqref{eq:spe}, we have
\begin{align*}
&\b(t_A(\eta))(\alpha_1\otimes\cdots \otimes \alpha_{k+1}) = \\
&=(-1)^{|\eta|(|\alpha_1|+1)+1 + \spe{\tau}{\alpha_2,\ldots,\alpha_{k+1}}+ k + 1 + (|\eta|+ \sum_{j = 2}^{k+1}(|\alpha_j|+1))|\alpha_1|}\eta(\alpha_{k+1} \otimes \cdots \otimes \alpha_2)\alpha_1 + \\
&\quad + \sum_{i = 1}^k (-1)^{|\eta| + 1 + \sum_{j = 1}^{i}(|\alpha_j|+1)+\spe{\tau}{\alpha_1,\ldots,\alpha_{i+1}\alpha_i,\ldots,\alpha_{k+1}}+k + 1 + |\alpha_i||\alpha_{i+1}|} \times \notag \\
& \quad \qquad \qquad \qquad \qquad \times \eta(\alpha_{k+1}\otimes \cdots \otimes \alpha_{i+1} \alpha_{i} \otimes \cdots \otimes \alpha_{1}) + \notag\\
& \quad + (-1)^{|\eta| + \sum_{j = 1}^{k}(|\alpha_j|+1)+\spe{\tau}{\alpha_1,\ldots,\alpha_k} + k + 1 + (|\eta| + \sum_{j = 1}^k(|\alpha_j| + 1))|\alpha_{k+1}|} \alpha_{k+1}\eta(\alpha_k \otimes \cdots \otimes \alpha_1) \\
& = (-1)^{|\eta| + \sum_{j = 2}^{k+1}(|\alpha_j|+1) + \spe{\tau}{\alpha_1,\alpha_2,\ldots,\alpha_{k+1}}+ k + 2}\eta(\alpha_{k+1} \otimes \cdots \otimes \alpha_2)\alpha_1 + \\
&\quad + \sum_{i = 1}^k (-1)^{|\eta| + 1 + \sum_{j = i+1}^{k+1}(|\alpha_j|+1)+\spe{\tau}{\alpha_1,\ldots,\alpha_i, \alpha_{i+1},\ldots,\alpha_{k+1}}+k + 2}\eta(\alpha_{k+1}\otimes \cdots \otimes \alpha_{i+1} \alpha_{i} \otimes \cdots \otimes \alpha_{1}) + \notag\\
& \quad + (-1)^{|\eta|(|\alpha_{k+1}|+1) + 1+ \spe{\tau}{\alpha_1,\ldots,\alpha_{k+1}} + k + 2} \alpha_{k+1}\eta(\alpha_k \otimes \cdots \otimes \alpha_1) \\
& = t_A(\b(\eta))(\alpha_1\otimes \cdots \otimes \alpha_{k+1})
\end{align*}
\end{proof}

\subsection{Free graded commutative algebras}
Let $V$ be a graded $\k$ vector space. Denote by $\Lambda V$ the free graded commutative algebra on $V.$ Recall notation~\eqref{eq:spe}. Denote by $S_k$ the group of permutations of the set $\{1,\ldots,k\}.$ The following is a special case of the Hochschild-Kostant-Rosenberg theorem~\cite{HKR62} for graded commutative algebras. We include a proof for the reader's convenience.

\begin{tm}\label{tm:HKR}
When $A = \Lambda V,$ there is an isomorphism
\[
\epsilon : HH^*(A) \overset{\sim}{\lrarr} Hom_\k(\Lambda( V[1]), A)
\]
given on the class of a Hochschild cycle $\eta : A[1]^{\otimes k} \to A$ by the formula
\begin{equation}\label{eq:eps}
\epsilon([\eta])(v_1\cdots v_k) = \sum_{\sigma \in S_k} (-1)^{\spe{\sigma}{v_1,\ldots,v_k}} \eta(v_{\sigma(1)} \otimes \cdots \otimes v_{\sigma(k)}).
\end{equation}
\end{tm}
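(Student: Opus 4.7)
The approach is the standard proof of the Hochschild--Kostant--Rosenberg theorem via a Koszul bimodule resolution, adapted to the graded commutative setting and with careful attention to the signs. First, I would construct a Koszul resolution $K_\bullet \to A$ of $A = \Lambda V$ as a module over $A \otimes A^{op}$. As a graded bimodule, set
\[
K_\bullet = A \otimes \Lambda(V[1]) \otimes A,
\]
with $\Lambda^k(V[1])$ placed in resolution degree $k$, and define the Koszul differential $d$ to be the $(A \otimes A^{op})$-linear derivation determined on generators by $d(1 \otimes v \otimes 1) = v \otimes 1 - 1 \otimes v$. The augmentation $K_0 = A \otimes A \to A$ is multiplication. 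I would verify that $K_\bullet \to A$ is exact by the standard contracting-homotopy argument: after base change along $A \otimes A \to A$, the complex becomes the classical Koszul complex on the sequence of generators of $V$, which is acyclic in positive degrees because $A$ is the free graded commutative algebra on $V$.

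Next, I would apply $Hom_{A \otimes A^{op}}(-, A)$. The free-module adjunction gives
\[
Hom_{A \otimes A^{op}}(A \otimes \Lambda^k(V[1]) \otimes A, A) \cong Hom_\k(\Lambda^k(V[1]), A),
\]
and I would verify that the induced differential on the right-hand side vanishes identically. This is the decisive place where graded commutativity of $A$ is used: the coboundary of a map $f$ evaluated on a generator produces terms of the form $v \cdot f(\cdots) - (-1)^{\epsilon} f(\cdots) \cdot v$ (weighted by appropriate signs), which cancel because $A$ is graded commutative and the shift to $V[1]$ produces precisely the signs that effect the cancellation. Consequently $HH^*(A) \cong Hom_\k(\Lambda(V[1]), A)$ \emph{as graded vector spaces}, provided we can identify the isomorphism with $\epsilon$ from~\eqref{eq:eps}.

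For the identification, I would construct a chain map $\Phi : K_\bullet \to BA$ of $(A \otimes A^{op})$-projective resolutions of $A$ given on generators by the graded antisymmetrization
\[
\Phi(1 \otimes v_1 \cdots v_k \otimes 1) = \sum_{\sigma \in S_k} (-1)^{\spe{\sigma}{v_1,\ldots,v_k}} \, 1 \otimes v_{\sigma(1)} \otimes \cdots \otimes v_{\sigma(k)} \otimes 1,
\]
extended $(A \otimes A^{op})$-linearly. Checking that $\Phi$ intertwines $d$ with the bar differential $b$ is a direct combinatorial calculation: the ``outer'' terms of $b$ reassemble into the $v \otimes 1 - 1 \otimes v$ Koszul boundary, while the ``inner'' multiplications $\alpha_i \alpha_{i+1}$ pair off in the antisymmetrized sum and cancel by graded commutativity. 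Since both $K_\bullet$ and $BA$ are projective resolutions of $A$ and $\Phi$ covers the identity, $\Phi$ is automatically a quasi-isomorphism, and
\[
\Phi^* : HH^*(A) \;\xrightarrow{\sim}\; H^*(Hom_{A \otimes A^{op}}(K_\bullet, A)) \cong Hom_\k(\Lambda(V[1]), A).
\]
Unwinding the adjunction using Lemma~\ref{lm:hhbr} and substituting the formula for $\Phi$, one sees that $\Phi^*$ acts on a Hochschild cocycle $\eta$ by precisely the formula~\eqref{eq:eps} defining $\epsilon$.

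The main obstacle is sign bookkeeping. Three compatible sign calculations must all fit together: the definition of $d$ so that $d^2 = 0$ and $K_\bullet \to A$ resolves $A$; the vanishing of the induced differential on $Hom_{A \otimes A^{op}}(K_\bullet, A)$; and the intertwining property of $\Phi$ with signs matching $\spe{\sigma}{v_1,\ldots,v_k}$. All three amount to the same ``Koszul rule on the shifted space $V[1]$'' phenomenon, but the conventions must be pinned down at the outset and the transposition-by-transposition check for $\Phi$ requires some care.
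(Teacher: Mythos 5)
Your proposal follows essentially the same route as the paper's proof: construct the Koszul bimodule resolution $A\otimes\Lambda(V[1])\otimes A$, observe that the induced differential on $Hom_{A\otimes A^{op}}(K_\bullet,A)$ vanishes because $v\otimes 1 - 1\otimes v$ acts by zero on $A$, and obtain $\epsilon$ by comparing with the bar resolution via the graded antisymmetrization map (called $\tilde\epsilon$ in the paper, $\Phi$ in your proposal), whose formula and signs agree exactly with the paper's. The only place you are looser than the paper is the exactness of the Koszul resolution, but the paper asserts it without proof as well.
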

\begin{proof}
In the case $A = \Lambda V,$ the Koszul complex $KV$ gives a free resolution of $A$ as an $A \otimes A^{op}$ module that is much smaller and easier to compute with than $BA$.
As a graded vector space,
\[
KV =  A \otimes \Lambda(V[1]) \otimes A.
\]
The left $A \otimes A^{op}$ module structure of $KV$ is given by
\[
(x \otimes y) \cdot (u \otimes v_1\cdots v_k \otimes w) = (-1)^{|y|(k + |u|+|w| + \sum_{j = 1}^k |v_j|)}xu \otimes v_1 \cdots v_k \otimes wy
\]
The differential $\delta$ on $KV$ is the left $A\otimes A^{op}$ module homomorphism of degree $1$ given by
\[
\delta(u \otimes v_1\cdots v_k \otimes w) = \sum_{i = 1}^k (-1)^{(|v_i| +1)\left(|u|+\sum_{j = 1}^{i-1}(|v_j| + 1)\right)}(v_i \otimes 1 - 1 \otimes v_i)\cdot(u \otimes v_1 \cdots \hat v_i \cdots v_k \otimes w).
\]
The map $KV \to A$ is given by multiplication on $A \otimes \Lambda^0(V[1])\otimes A \simeq A \otimes A$ and zero on all other summands of $KV.$

Since elements $v \otimes 1 - 1 \otimes v \in A \otimes A^{op}$ act by zero on $A,$ it follows that the differential on $Hom_{A\otimes A^{op}}(KV,A)$ induced by $\delta$ vanishes identically. Thus,
\[
H^*(Hom_{A\otimes A^{op}}(KV,A)) = Hom_{A\otimes A^{op}}(KV,A).
\]
Furthermore, we have an isomorphism
\[
q : Hom_{A\otimes A^{op}}(KV,A) \overset{\sim}{\lrarr} Hom_\k(\Lambda V[1],A)
\]
given on a left $A\otimes A^{op}$ module homomorphism $\xi : A \otimes \Lambda^k (V([1]) \otimes A \to A$ by the formula
\[
q(\xi)(v_1\cdots v_k) = \xi(1\otimes v_1\cdots v_k \otimes 1).
\]

Define a map of complexes of $A\otimes A^{op}$ modules $\tilde \epsilon : KV \to BA$ by
\[
\tilde \epsilon(u \otimes v_1 \cdots v_k \otimes w) =  \sum_{\sigma \in S_k} (-1)^{\spe{\sigma}{v_1,\ldots,v_k}} u \otimes v_{\sigma(1)} \otimes \cdots \otimes v_{\sigma(k)} \otimes w.
\]
Denote by
\[
\tilde \epsilon^* : Hom_{A\otimes A^{op}}(BA,A) \to Hom_{A\otimes A^{op}}(KV,A)
\]
the induced map on $Hom$ complexes.
Since both $BA$ and $KV$ are free resolutions and $\tilde \epsilon$ lifts the identity map on $A,$ it follows that the induced map on cohomology
\[
H^*(\tilde\epsilon^*) : H^*(Hom_{A\otimes A^{op}}(BA,A)) \to H^*(Hom_{A\otimes A^{op}}(KV,A))
\]
is an isomorphism.
Thus, recalling Lemma~\ref{lm:hhbr}, we define
\[
\epsilon = q \circ H^*(\tilde\epsilon^*) \circ H^*(r).
\]
For a Hochschild cochain $\eta : A[1]^{\otimes k} \to A,$ we have
\[
\tilde \epsilon^*(r(\eta))(1 \otimes v_1 \cdots v_k \otimes 1) = r(\eta)(\tilde\epsilon(1 \otimes v_1\cdots v_k\otimes 1)) = \sum_{\sigma \in S_k} (-1)^{\spe{\sigma}{v_1,\ldots,v_k}} \eta(v_{\sigma(1)} \otimes \cdots \otimes v_{\sigma(k)}).
\]
Formula~\eqref{eq:eps} follows.
\end{proof}

\subsection{Anti-symmetric Hochschild cocycles}

\begin{df}
Let $A = \Lambda V.$ A Hochschild cochain $\eta : A[1]^{\otimes k} \to A$ is called \textbf{anti-symmetric} if
\[
\eta(v_1 \otimes \cdots \otimes v_k) = - (-1)^{\spe{\tau}{v_1,\ldots,v_k}}\eta(v_k \otimes \cdots \otimes v_1)
\]
for $v_1,\ldots,v_k \in V.$
\end{df}

\begin{pr}\label{pr:ex}
Suppose $\ch \k \neq 2.$ Then, a closed anti-symmetric Hochschild cochain for $A = \Lambda V$ is exact.
\end{pr}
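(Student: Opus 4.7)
The plan is to apply the Hochschild--Kostant--Rosenberg isomorphism $\epsilon : HH^*(A) \to Hom_\k(\Lambda(V[1]), A)$ of Theorem~\ref{tm:HKR}: since $\epsilon$ is an isomorphism of graded vector spaces, a closed Hochschild cochain $\eta : A[1]^{\otimes k} \to A$ is exact if and only if $\epsilon([\eta]) = 0$. So it suffices to show that for anti-symmetric $\eta$, the expression
\[
\epsilon([\eta])(v_1\cdots v_k) = \sum_{\sigma \in S_k} (-1)^{\spe{\sigma}{v_1,\ldots,v_k}}\eta(v_{\sigma(1)}\otimes\cdots\otimes v_{\sigma(k)})
\]
vanishes for every $v_1,\ldots,v_k \in V$.

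The degenerate cases $k \leq 1$ follow directly from anti-symmetry together with $\ch \k \neq 2$: for $k = 0$ anti-symmetry reads $\eta = -\eta$, so $\eta = 0$, and for $k = 1$ anti-symmetry gives $\eta(v) = -\eta(v)$, making the single-term sum vanish. For $k \geq 2$, the reversal permutation $\tau$ satisfies $\tau \neq e$, so the right multiplication $\sigma \mapsto \sigma\tau$ is a fixed-point-free involution of $S_k$. I would group the summands of $\epsilon([\eta])(v_1 \cdots v_k)$ into $\sigma \leftrightarrow \sigma\tau$ pairs and verify that each pair cancels.

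For the cancellation, apply anti-symmetry of $\eta$ to the tuple $(v_{\sigma(1)},\ldots,v_{\sigma(k)})$ to rewrite the $\sigma$-term as
\[
-(-1)^{\spe{\sigma}{v_1,\ldots,v_k} + \spe{\tau}{v_{\sigma(1)},\ldots,v_{\sigma(k)}}}\eta(v_{\sigma\tau(1)}\otimes\cdots\otimes v_{\sigma\tau(k)}).
\]
The Koszul cocycle identity
\[
\spe{\sigma\tau}{v_1,\ldots,v_k} \equiv \spe{\sigma}{v_1,\ldots,v_k} + \spe{\tau}{v_{\sigma(1)},\ldots,v_{\sigma(k)}} \pmod 2,
\]
which comes from iterating the defining relation $w_{\pi(1)}\cdots w_{\pi(k)} = (-1)^{\spe{\pi}{v_1,\ldots,v_k}}w_1\cdots w_k$ in the free graded commutative algebra on $V[1]$ (where $w_i$ has degree $|v_i|+1$), then converts the displayed expression into the negative of the $\sigma\tau$-term in the HKR sum, so the pair sums to zero. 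The main obstacle is simply sorting the Koszul signs; once the composition law for $\spe{\cdot}{\cdot}$ is in hand, the pairing argument collapses cleanly and we conclude $\epsilon([\eta]) = 0$, hence $\eta$ is exact.
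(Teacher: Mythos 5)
Your proof is correct and follows essentially the same route as the paper's: apply the HKR isomorphism $\epsilon$ of Theorem~\ref{tm:HKR}, use the composition law \eqref{eq:comp} for $\spe{\cdot}{\cdot}$ together with anti-symmetry of $\eta$, and conclude $\epsilon([\eta]) = 0$. The only cosmetic difference is that you pair $\sigma \leftrightarrow \sigma\tau$ and treat $k \leq 1$ separately, whereas the paper reindexes the full sum to obtain $\epsilon([\eta]) = -\epsilon([\eta])$ and invokes $\ch\k \neq 2$ uniformly; this is the same calculation, organized slightly differently.
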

\begin{proof}
For $\rho,\sigma \in S_k,$ and $v_1,\ldots,v_k \in V,$ we have
\begin{equation}\label{eq:comp}
\spe{\sigma\circ\rho}{v_1,\ldots,v_k} = \spe{\sigma}{v_{1},\ldots,v_{k}} + \spe{\rho}{v_{\sigma(1)},\ldots,v_{\sigma(k)}}.
\end{equation}
Let $\eta : A[1]^{\otimes k} \to A$ be an anti-symmetric closed Hochschild cochain for $A.$ Consider the map $\epsilon$ of Theorem~\ref{tm:HKR} applied to $[\eta] \in HH^*(A).$ It follows from~\eqref{eq:comp} that
\begin{align*}
\epsilon([\eta])(v_1\cdots v_k) &= \sum_{\sigma \in S_k} (-1)^{\spe{\sigma}{v_1,\ldots,v_k}} \eta(v_{\sigma(1)} \otimes \cdots \otimes v_{\sigma(k)}) \\
&= -\sum_{\sigma \in S_k} (-1)^{\spe{\sigma}{v_1,\ldots,v_k} + \spe{\tau}{v_{\sigma(1)},\ldots,v_{\sigma(k)}}} \eta(v_{\sigma(k)} \otimes \cdots \otimes v_{\sigma(1)}) \\
& = -\sum_{\sigma \in S_k} (-1)^{\spe{\sigma\circ\tau}{v_1,\ldots,v_k}} \eta(v_{\sigma\circ\tau(1)} \otimes \cdots \otimes v_{\sigma\circ\tau(k)}) \\
& = -\epsilon([\eta])(v_1\cdots v_k).
\end{align*}
Thus $\epsilon([\eta]) = 0$, and by Theorem~\ref{tm:HKR}, $\eta$ is exact.
\end{proof}

\section{Formality}\label{sec:formality}
This section begins by developing obstruction theory for self-dual $A_\infty$ algebras. We show in Theorem~\ref{tm:asainf} that anti-symmetric $A_\infty$ algebras are formal. Finally, we deduce Theorem~\ref{tm:main} from Theorem~\ref{tm:asainf}.

\subsection{Obstruction theory}\label{sec:ot}
Let $(C,\m)$ be a flat minimal $G$ gapped $A_\infty$ algebra. Recall from Section~\ref{ssec:ua} the definition of the underlying algebra $A_C.$
\begin{lm}\label{lm:Aib}
The Hochschild coboundary operator applied to a cochain $\eta : A_C[1]^{\otimes k} \to A_C$ of homogeneous degree $|\eta|$ is given by
\begin{align*}
\b(\eta)(\alpha_1,\ldots,\alpha_{k+1})
& = (-1)^{(|\eta|+1)(|\alpha_1|+1) } \m_{2,0}(\alpha_1,\eta(\alpha_2 \otimes \cdots \otimes \alpha_{k+1})) + \\
&\qquad - \sum_{i = 1}^k (-1)^{|\eta| + 1 + \sum_{j = 1}^{i-1}(|\alpha_j|+1)}\eta(\alpha_1\otimes \cdots \otimes \m_{2,0}(\alpha_i, \alpha_{i+1}) \otimes \cdots \otimes \alpha_{k+1}) + \\
& \qquad + \m_{2,0}(\eta(\alpha_1 \otimes \cdots \otimes \alpha_k), \alpha_{k+1}).
\end{align*}
\end{lm}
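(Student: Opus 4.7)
The plan is to start from the definition of the Hochschild differential in equation~\eqref{eq:hcb} and substitute the identity $[\alpha]\cdot[\beta] = (-1)^{|\alpha|}\m_{2,0}(\alpha,\beta)$ from~\eqref{eq:m2p}, so that every product of algebra elements is rewritten in terms of $\m_{2,0}$. Since $(C,\m)$ is flat and minimal, $\m_{1,0} = 0$, so $A_C = \overline C$ as a graded vector space and we can interpret $\eta$ directly as a map $\overline C^{\otimes k} \to \overline C$ with the same degree. The three groups of terms in~\eqref{eq:hcb}, namely the leftmost $\alpha_1 \eta(\cdots)$, the inner $\eta(\cdots \alpha_i\alpha_{i+1}\cdots)$, and the rightmost $\eta(\cdots)\alpha_{k+1}$, will be handled one at a time.

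For the first term I would compute $(-1)^{|\eta|(|\alpha_1|+1)+1}\alpha_1\eta(\cdots) = (-1)^{|\eta|(|\alpha_1|+1)+1+|\alpha_1|}\m_{2,0}(\alpha_1,\eta(\cdots))$, and observe that $(|\eta|+1)(|\alpha_1|+1) \equiv |\eta|(|\alpha_1|+1) + |\alpha_1| + 1 \pmod 2$, which gives the claimed sign $(-1)^{(|\eta|+1)(|\alpha_1|+1)}$. For the middle terms, the substitution introduces the factor $(-1)^{|\alpha_i|}$; combining it with $\sum_{j=1}^i(|\alpha_j|+1) = \sum_{j=1}^{i-1}(|\alpha_j|+1) + |\alpha_i| + 1$ converts the sign $(-1)^{|\eta|+1+\sum_{j=1}^i(|\alpha_j|+1)}$ into $-(-1)^{|\eta|+1+\sum_{j=1}^{i-1}(|\alpha_j|+1)}$, which is the desired sign with the overall minus sign. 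For the last term, since $|\eta(\alpha_1\otimes\cdots\otimes\alpha_k)| = |\eta| + \sum_{j=1}^k|\alpha_j| - k$ as an element of $\overline C$, the extra sign $(-1)^{|\eta(\cdots)|}$ produced by the substitution $ab = (-1)^{|a|}\m_{2,0}(a,b)$ combines with the sign $(-1)^{|\eta| + \sum_{j=1}^k(|\alpha_j|+1)}$ from~\eqref{eq:hcb} to give an even total exponent, leaving simply $\m_{2,0}(\eta(\cdots),\alpha_{k+1})$.

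This is entirely a bookkeeping exercise: no conceptual input beyond formula~\eqref{eq:hcb} and equation~\eqref{eq:m2p} is required, and there is no step that could fail. The only possible obstacle is a systematic arithmetic slip in one of the three sign computations, so the main care will be to write the parities $\sum_{j=1}^i(|\alpha_j|+1) \equiv \sum_{j=1}^{i-1}(|\alpha_j|+1) + |\alpha_i| + 1 \pmod 2$ and $|\eta(\alpha_1\otimes\cdots\otimes\alpha_k)| \equiv |\eta| + \sum_{j=1}^k|\alpha_j| + k \pmod 2$ explicitly before combining them with the prefactors from~\eqref{eq:hcb}. Adding the three resulting expressions yields the formula stated in Lemma~\ref{lm:Aib}.
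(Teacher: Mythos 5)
Your proposal is correct and takes exactly the approach the paper does: the paper's proof of Lemma~\ref{lm:Aib} is simply ``Combine equations~\eqref{eq:hcb} and~\eqref{eq:m2p},'' and you carry out that substitution with the sign bookkeeping spelled out. Your three parity checks (first-term sign, the shift of the running sum in the middle terms, and the cancellation in the last term using $|\eta(\alpha_1\otimes\cdots\otimes\alpha_k)|\equiv|\eta|+\sum_j|\alpha_j|+k\pmod 2$) are all correct.
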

\begin{proof}
Combine equations~\eqref{eq:hcb} and~\eqref{eq:m2p}.
\end{proof}

Define
\begin{equation}\label{eq:nu}
\nu(\m) = \min\{ k + \beta \,|\, (k, \beta) \in \Z_{\geq 0}\times G,\; k+ \beta > 2,\; \m_{k,\beta} \neq 0\}.
\end{equation}
We use the convention that $\min$ of the empty set is $\infty.$
\begin{lm}\label{lm:cl}
Suppose $k + \beta = \nu(\m).$
Then, considering $\m_{k,\beta}$ as a Hochschild $2$-cochain for the algebra $A_C$, we have $\b(\m_{k,\beta}) = 0.$
\end{lm}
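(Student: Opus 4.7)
The plan is to extract $\b(\m_{k,\beta}) = 0$ directly from the $A_\infty$ relation~\eqref{eq:Ggainf} at length $k+1$ and total area $\beta$. First, since $(C,\m)$ is flat and minimal we have $\m_{0,\beta'} = 0$ and $\m_{1,\beta'} = 0$ for all $\beta' \in G$; together with the definition~\eqref{eq:nu} of $\nu(\m)$ this means the only pairs $(k',\beta')$ with $k'+\beta' \le k+\beta$ for which $\m_{k',\beta'}$ can be nonzero are $(2,0)$ and $(k,\beta)$. In particular $\m_{1,0} = 0$, so $A_C = \overline{C}$ as a graded vector space and $\m_{k,\beta} \colon \overline C^{\otimes k} \to \overline C$ defines a Hochschild cochain of degree $|\eta| = 2$ for $A_C$.

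Expanding~\eqref{eq:Ggainf} with outer arity $k_1$, inner arity $k_2$, $k_1 + k_2 = k + 2$, $\beta_1 + \beta_2 = \beta$, I would rule out all summands except those in which exactly one factor is $\m_{2,0}$ and the other is $\m_{k,\beta}$. Indeed, if neither were $\m_{2,0}$, then both satisfy $k_j + \beta_j \ge \nu(\m)$, so
\[
2(k+\beta) \;\le\; (k_1+\beta_1) + (k_2+\beta_2) \;=\; k + 2 + \beta,
\]
forcing $k+\beta \le 2$ and contradicting $\nu(\m) > 2$; while if both were $\m_{2,0}$ then $k+2 = 4$ and $\beta = 0$, again forcing $\nu(\m) = 2$. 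The surviving terms, corresponding to $(k_1,\beta_1) = (2,0)$ with $i \in \{1,2\}$ and to $(k_2,\beta_2) = (2,0)$ with $i \in \{1,\ldots,k\}$ respectively, are
\[
\m_{2,0}(\m_{k,\beta}(\alpha_1,\ldots,\alpha_k),\alpha_{k+1}) + (-1)^{|\alpha_1|+1}\,\m_{2,0}(\alpha_1,\m_{k,\beta}(\alpha_2,\ldots,\alpha_{k+1}))
\]
and
\[
\sum_{i=1}^{k}(-1)^{\sum_{j=1}^{i-1}(|\alpha_j|+1)}\m_{k,\beta}(\alpha_1,\ldots,\m_{2,0}(\alpha_i,\alpha_{i+1}),\ldots,\alpha_{k+1}),
\]
whose total sum vanishes by the $A_\infty$ relation.

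Finally, I would compare term-by-term with Lemma~\ref{lm:Aib} applied to $\eta = \m_{k,\beta}$ with $|\eta| = 2$: the sign prefactors $(-1)^{(|\eta|+1)(|\alpha_1|+1)}$, $-(-1)^{|\eta|+1+\sum_{j=1}^{i-1}(|\alpha_j|+1)}$, and $+1$ collapse to $(-1)^{|\alpha_1|+1}$, $(-1)^{\sum_{j=1}^{i-1}(|\alpha_j|+1)}$, and $+1$ respectively, reproducing the $A_\infty$ relation on the nose. Hence the vanishing of this $A_\infty$ relation is literally $\b(\m_{k,\beta}) = 0$. The only real obstacle is the sign bookkeeping in this last comparison; the structural content is simply that at the lowest total length-plus-area where a higher operation first appears, its failure to be a derivation for the product on $A_C$ is constrained to be a Hochschild cocycle by the $A_\infty$ relations one level up.
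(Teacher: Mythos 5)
Your proof is correct and fills in exactly what the paper's one-line proof ("Use Lemma~\ref{lm:Aib} and equation~\eqref{eq:Ggainf}") gestures at: write the $A_\infty$ relation on $k+1$ inputs at total area $\beta$, use flatness, minimality and the defining minimality of $\nu(\m)$ to eliminate every term except the $\m_{2,0}$--$\m_{k,\beta}$ interactions, and identify the result with $\b(\m_{k,\beta})$ via Lemma~\ref{lm:Aib} with $|\eta|=2$. The case analysis and the sign bookkeeping both check out, so this is the paper's argument made explicit rather than a different route.
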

\begin{proof}
Use Lemma~\ref{lm:Aib} and equation~\eqref{eq:Ggainf}.
\end{proof}

Let $\f : C \to C$ be a $G$ gapped formal diffeomorphism. Define
\begin{equation}\label{eq:nuf}
\nu(\f) = \min\{k + \beta \,|\, (k,\beta) \in \Z_{\geq 0} \times G, \; k + \beta > 1, \; \f_{k,\beta} \neq 0\}.
\end{equation}
\begin{lm}\label{lm:fmb}
Let $\f : C \to C$ be a $G$ gapped formal diffeomorphism with $\f_{1,0} = \Id_{\overline C}$.
Then
\begin{equation}\label{eq:fm20}
(\f^*\m)_{2,0} = \m_{2,0}.
\end{equation}
Furthermore, let $\nu_0 > 2$ satisfy
\begin{equation}\label{eq:nuc}
\nu(\f) \geq \nu_0 - 1, \qquad  \nu(\m)  \geq \nu_0,
\end{equation}
and $(\f^*\m)_{k',\beta'} = 0$ for $2 < k' + \beta' < \nu_0$ and for $k' + \beta' = \nu_0$ with $k' < k.$ Then, for $\beta = \nu_0 - k$ we have
\begin{equation}\label{eq:fmkb}
(\f^*\m)_{k,\beta} = \m_{k,\beta} + \b(\f_{k-1,\beta}).
\end{equation}
This expression vanishes if the inequalities~\eqref{eq:nuc} are both strict.
\end{lm}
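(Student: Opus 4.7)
The plan is to expand the pull-back formula~\eqref{eq:Ggpullb} in both cases and run a weight-counting argument to identify the surviving terms. For~\eqref{eq:fm20}, I would set $k=2,\beta=0$ in~\eqref{eq:Ggpullb}. The hypothesis $\f_{1,0} = \Id_{\overline C}$ together with~\eqref{eq:f1-1} gives $(\f_1^{-1})_0 = \Id_{\overline C}$; minimality of $\m$ forces $\m_{1,0} = 0$; and Lemma~\ref{lm:pbflm} says $\f^*\m$ is itself flat minimal, so $(\f^*\m)_{0,\cdot} = 0$ and $(\f^*\m)_{1,\cdot} = 0$. The first sum collapses to $\m_{2,0}(\alpha_1,\alpha_2)$ while the second sum vanishes because every inner $(\f^*\m)_{k_2,\beta_2}$ appearing there has $k_2 < 2$.

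For~\eqref{eq:fmkb} with $k + \beta = \nu_0$, I would analyze the first sum of~\eqref{eq:Ggpullb} by calling an inner factor $\f_{r_j,\beta_j}$ \emph{trivial} if $(r_j,\beta_j) = (1,0)$ and \emph{non-trivial} otherwise. Since $\f_{0,\cdot} = 0$ and $\nu(\f) \geq \nu_0 - 1$, any non-trivial factor has $r_j + \beta_j \geq \nu_0 - 1$, so the weight constraint $\sum_j(r_j+\beta_j) \leq \nu_0 < 2(\nu_0-1)$ permits at most one non-trivial factor. Likewise, using~\eqref{eq:f1-1} with $\kappa_{\f^{-1}} = \kappa_\f \geq \nu_0 - 2$, the outer $(\f_1^{-1})_{\beta_{-1}}$ is nonzero only for $\beta_{-1} = 0$ or $\beta_{-1} \geq \nu_0 - 2$. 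When all inner factors are trivial, the $\beta_{-1}=0$ case contributes $\m_{k,\beta}$, and the anomalous $\beta_{-1} > 0$ case is forced to $k=2$ and contributes $-\f_{1,\nu_0-2}(\m_{2,0}(\alpha_1,\alpha_2))$. When exactly one inner factor is non-trivial, the weight inequality forces $\beta_{-1} = \beta_0 = 0$; the $l=1$ subcase dies because $\m_{1,\cdot} = 0$, and the two $l=2$ subcases contribute $\m_{2,0}(\f_{k-1,\beta}(\alpha_1,\ldots,\alpha_{k-1}), \alpha_k)$ and $\m_{2,0}(\alpha_1, \f_{k-1,\beta}(\alpha_2,\ldots,\alpha_k))$.

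In the second sum the inner $(\f^*\m)_{k_2,\beta_2}$ has $k_2 < k$ and $k_2+\beta_2 \leq \nu_0-1$; Lemma~\ref{lm:pbflm} gives $k_2 \geq 2$ for nonvanishing, and the standing hypothesis then forces $k_2+\beta_2 = 2$, so by~\eqref{eq:fm20} only $(k_2,\beta_2) = (2,0)$ survives with value $\m_{2,0}$. This forces $k_1 = k-1$, and the same $(\f_1^{-1})$-estimate forces $\beta_0 = 0$, so $\beta_1 = \beta$ and the second sum becomes $-\sum_{i=1}^{k-1}(-1)^{\sum_{j<i}(|\alpha_j|+1)}\f_{k-1,\beta}(\alpha_1,\ldots,\m_{2,0}(\alpha_i,\alpha_{i+1}),\ldots,\alpha_k)$. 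Comparing the assembled output with Lemma~\ref{lm:Aib} applied to $\eta = \f_{k-1,\beta}$, whose Hochschild-cochain degree is $|\eta| = 1$ and therefore makes all the signs $(-1)^{(|\eta|+1)(|\alpha_1|+1)}$ and $(-1)^{|\eta|+1+\cdots}$ collapse cleanly, yields exactly $\m_{k,\beta} + \b(\f_{k-1,\beta})$; in the $k=2$ case the anomalous contribution supplies precisely the middle term of this coboundary.

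The final vanishing statement is immediate: under the strict inequalities $\nu(\m) > \nu_0$ and $\nu(\f) > \nu_0 - 1$, one has $\m_{k,\beta} = 0$ because $k+\beta = \nu_0 < \nu(\m)$, and $\f_{k-1,\beta} = 0$ because $(k-1)+\beta = \nu_0-1 < \nu(\f)$, while the trivial case $(k-1,\beta) = (1,0)$ is excluded by $\nu_0 > 2$. The main obstacle is purely combinatorial, namely confirming that the special $k=2$ contribution from the outer $(\f_1^{-1})_{\nu_0-2}$ assembles with the surviving first-sum terms into precisely $\b(\f_{1,\nu_0-2})$ rather than leaving a leftover discrepancy; tracking the distribution of the gapping parameter among $\beta_{-1},\beta_0,\beta_1,\ldots,\beta_l$ is where one has to be most careful.
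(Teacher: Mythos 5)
Your proof is correct and follows essentially the same strategy as the paper: expand the pull-back recursion~\eqref{eq:Ggpullb}, use $\f_{1,0} = \Id$, $\f_0 = 0$, the estimates $\nu(\f) \geq \nu_0 - 1$ and $\kappa_\f \geq \nu_0 - 2$ (hence by~\eqref{eq:f1-1} the analogous estimates for $\f_1^{-1}$), flatness and minimality of $\m$ (and, by Lemma~\ref{lm:pbflm}, of $\f^*\m$), and the inductive vanishing hypothesis to identify the surviving terms, then match them with the Hochschild coboundary formula of Lemma~\ref{lm:Aib}. The paper's proof is considerably terser — it writes down the resulting identity in one display and cites~\eqref{eq:Ggpullb},~\eqref{eq:f1-1} and Lemma~\ref{lm:Aib} — so your weight-counting case analysis (at most one non-trivial inner factor, the $\beta_{-1}>0$ anomaly only surviving when $k=2$, and the second sum contributing only when $k>2$) is a faithful and useful unpacking of what is implicit there. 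One small presentational wrinkle: you first describe the second sum as yielding $-\sum_{i=1}^{k-1}(\cdots)$, which is only literally true for $k>2$ since for $k=2$ the constraint $k_2 < k$ together with $k_2 \geq 2$ makes the second sum empty; you do address this by noting that for $k=2$ the anomalous first-sum term supplies the middle summand of $\b(\f_{1,\beta})$ instead, so the argument is sound, but it would read more cleanly if the $k=2$ and $k>2$ cases were separated from the outset.
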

\begin{proof}
Equation~\eqref{eq:fm20} follows immediately from equations~\eqref{eq:Ggpullb} and~\eqref{eq:f1-1}.
Recalling the definition of $\kappa_\f$ from equation~\eqref{eq:kf}, we observe that
\[
\kappa_\f \geq \nu_0 - 2.
\]
So, equations~\eqref{eq:Ggpullb} and~\eqref{eq:f1-1} give
\begin{align*}
(\f^*\m)_{k,\beta}(\alpha_1,\ldots,\alpha_k)
& = \m_{k,\beta}(\alpha_1,\ldots,\alpha_k) + \\
& \qquad + \m_{2,0}(\alpha_1,\f_{k-1,\beta}(\alpha_2,\ldots,\alpha_{k+1})) + \m_{2,0}(\f_{k-1,\beta}(\alpha_1,\ldots,\alpha_k),\alpha_{k+1})\\
&\qquad - \sum_{i = 1}^k (-1)^{\sum_{j = 1}^{i-1}(|\alpha_j|+1)}\f_{k-1,\beta}(\alpha_1,\ldots,\m_{2,0}(\alpha_i, \alpha_{i+1}),\ldots,\alpha_{k+1}).
\end{align*}
Thus, equation~\eqref{eq:fmkb} follows from Lemma~\ref{lm:Aib}. The final claim is straightforward.
\end{proof}

Let $\c : C \to C$ be a $\k$ structured involution such that $\m$ is $\c$ self-dual. Recall from Lemma~\ref{lm:chom} that $\bar \c$ is a homomorphism from the underlying algebra $A_C$ to its opposite.
\begin{lm}\label{lm:cm}
Considering $\m_{k,\beta}$ as an element of $CC^*(A_C),$ we have
\[
\bc^* t_{A_C}(\m_{k,\beta}) = \m_{k,\beta}.
\]
\end{lm}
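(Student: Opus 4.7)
The plan is to simply unfold the definitions of $\bc^*$ and $t_{A_C}$ and apply Lemma~\ref{lm:psdm}. First I would evaluate, for arbitrary $\alpha_1,\ldots,\alpha_k \in A_C$,
\[
\bc^* t_{A_C}(\m_{k,\beta})(\alpha_1,\ldots,\alpha_k) = \bc^{-1}\!\left(t_{A_C}(\m_{k,\beta})(\bc(\alpha_1),\ldots,\bc(\alpha_k))\right),
\]
and then expand $t_{A_C}$ according to its definition to obtain
\[
\bc^{-1}\!\left((-1)^{\spe{\tau}{\bc\alpha}+k+1}\m_{k,\beta}(\bc(\alpha_k),\ldots,\bc(\alpha_1))\right).
\]
Since $\bc$ is of degree zero, $|\bc(\alpha_i)|=|\alpha_i|$, so $\spe{\tau}{\bc\alpha}=\spe{\tau}{\alpha}$, and $\bc$ is an involution, so $\bc^{-1}=\bc$.

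Next I would apply Lemma~\ref{lm:psdm}, which expresses the $\c$~self-duality of $\m$ as
\[
\m_{k,\beta}(\bc(\alpha_1),\ldots,\bc(\alpha_k)) = (-1)^{\spe{\tau}{\alpha}+k+1}\bc\!\left(\m_{k,\beta}(\alpha_k,\ldots,\alpha_1)\right),
\]
but applied to the reversed tuple $(\alpha_k,\ldots,\alpha_1)$ in place of $(\alpha_1,\ldots,\alpha_k)$. One easily checks from formula~\eqref{eq:spe} that $\spe{\tau}{\alpha_k,\ldots,\alpha_1}=\spe{\tau}{\alpha_1,\ldots,\alpha_k}$, so this yields
\[
\m_{k,\beta}(\bc(\alpha_k),\ldots,\bc(\alpha_1)) = (-1)^{\spe{\tau}{\alpha}+k+1}\bc\!\left(\m_{k,\beta}(\alpha_1,\ldots,\alpha_k)\right).
\]

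Substituting this back, the two factors of $(-1)^{\spe{\tau}{\alpha}+k+1}$ cancel and the two applications of $\bc$ collapse via $\bc^2=\Id$, leaving exactly $\m_{k,\beta}(\alpha_1,\ldots,\alpha_k)$. This gives the desired identity. There is no real obstacle here; the only mild bookkeeping is confirming the symmetry of $\spe{\tau}{\cdot}$ under reversal of the list, which is immediate from the fact that $\tau$ inverts every pair.
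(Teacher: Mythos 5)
Your argument is correct and is precisely the "reformulation" of Lemma~\ref{lm:psdm} that the paper's one-line proof has in mind: unfold $\bc^*$ and $t_{A_C}$, use that $\bc$ is a degree-zero involution (so $\bc^{-1}=\bc$ and $\spe{\tau}{\bc\alpha}=\spe{\tau}{\alpha}$), apply Lemma~\ref{lm:psdm} with the list reversed, observe $\spe{\tau}{\alpha_k,\ldots,\alpha_1}=\spe{\tau}{\alpha_1,\ldots,\alpha_k}$ (immediate since $\tau$ inverts every pair, so $\spe{\tau}{\cdot}$ is the full sum $\sum_{i<j}(|\alpha_i|+1)(|\alpha_j|+1)$ for any ordering), and cancel the signs and $\bc$'s. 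This matches the paper's approach.
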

\begin{proof}
Reformulate Lemma~\ref{lm:psdm}.
\end{proof}
\begin{lm}\label{lm:cf}
Suppose $\c$ is a $\k$ structured involution of $C$ and $\f: C \to C$ is a $G$ gapped $A_\infty$ pre-homomorphism. Then, $\f$ is $\c$ self-dual if and only if, considering $\f_{k,\beta}$ as an element of $CC^*(A_C),$ we have
\[
\bc^* t_{A_C}(\f_{k,\beta}) = \f_{k,\beta}
\]
for all $k \geq 0,\beta \in G.$
\end{lm}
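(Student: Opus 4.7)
The plan is to recognize that this lemma is, like Lemma~\ref{lm:cm} before it, nothing more than a translation of an already-proved criterion (here Lemma~\ref{lm:psdf}) into the Hochschild cochain formalism. So the proof should simply expand $\bc^* t_{A_C}(\f_{k,\beta})$ using the definitions of $t_{A_C}$ and pull-back by $\bc$, and then compare to the formula in Lemma~\ref{lm:psdf}.

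Concretely, I would proceed as follows. First, since $\bc$ has degree zero, $|\bc(\alpha_i)| = |\alpha_i|$ and hence $\spe{\tau}{\bc(\alpha_1),\ldots,\bc(\alpha_k)} = \spe{\tau}{\alpha_1,\ldots,\alpha_k}$. Moreover, $\bc^{-1} = \bc$ because $\bc$ is an involution. Combining these with the definitions of $t_{A_C}$ and of $\bc^*$ gives, for all $\alpha_1,\ldots,\alpha_k \in \overline C$,
\[
\bc^* t_{A_C}(\f_{k,\beta})(\alpha_1,\ldots,\alpha_k) = (-1)^{\spe{\tau}{\alpha}+k+1}\,\bc\bigl(\f_{k,\beta}(\bc(\alpha_k),\ldots,\bc(\alpha_1))\bigr).
\]

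Next, I would observe that the stated equation $\bc^* t_{A_C}(\f_{k,\beta}) = \f_{k,\beta}$ is, after applying $\bc$ to both sides and using $\bc^2 = \Id$, equivalent to
\[
\bc\bigl(\f_{k,\beta}(\alpha_1,\ldots,\alpha_k)\bigr) = (-1)^{\spe{\tau}{\alpha}+k+1}\,\f_{k,\beta}(\bc(\alpha_k),\ldots,\bc(\alpha_1)).
\]
Substituting $\alpha_i \mapsto \bc(\alpha_i)$ throughout (which does not affect the sign, by the degree-zero remark above) turns this into exactly the identity characterizing $\c$-self-duality in Lemma~\ref{lm:psdf} applied with $\c_C = \c_D = \c$.

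There is no real obstacle: the statement is a bookkeeping exercise, and the only thing to watch is the interaction of $\bc$ with the permutation sign $\spe{\tau}{\cdot}$, which is harmless because $\bc$ has degree zero. The gapping index $\beta$ plays no role since the criterion is checked term by term; the lemma then reduces to the corresponding statement about each individual $\f_{k,\beta}$ already supplied by Lemma~\ref{lm:psdf}.
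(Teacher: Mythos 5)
Your proof is correct and follows the same route as the paper, whose entire proof is the single line "Reformulate Lemma~\ref{lm:psdf}"; you have simply made the reformulation explicit. The only microscopic gap is at the very end: after substituting $\alpha_i \mapsto \bc(\alpha_i)$ you obtain $\bc\bigl(\f_{k,\beta}(\bc(\alpha_1),\ldots,\bc(\alpha_k))\bigr) = (-1)^{\spe{\tau}{\alpha}+k+1}\f_{k,\beta}(\alpha_k,\ldots,\alpha_1)$, which is the Lemma~\ref{lm:psdf} identity only after one further application of the involution $\bc$ to both sides — but you have already noted $\bc^2 = \Id$, so this is harmless.
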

\begin{proof}
Reformulate Lemma~\ref{lm:psdf}.
\end{proof}
\begin{lm}\label{lm:as}
Suppose the flat minimal $G$ gapped $A_\infty$ algebra $(C,\m)$ with involution $\c$ is anti-symmetric. Then $\m_{k,\beta}$ is an anti-symmetric Hochschild cochain of $A_C$ for all $k,\beta.$
\end{lm}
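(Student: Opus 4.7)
The plan is to unpack the $\c$ self-duality condition via Lemma~\ref{lm:cm}, applied to arguments $v_1,\ldots,v_k\in V$, and then exploit the two special features of the anti-symmetric setup: that $\bar\c$ acts on $V$ by $-\Id_V$, and that the target of $\m_{k,\beta}$ on odd-degree inputs necessarily lands in the even part of $\Lambda V$, on which $\bar\c$ acts trivially.

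More concretely, I would start from the identity
\[
\bar\c^*\,t_{A_C}(\m_{k,\beta}) = \m_{k,\beta}
\]
provided by Lemma~\ref{lm:cm}. Evaluating the left side on $v_1,\ldots,v_k\in V$ and using $\bar\c(v_i)=-v_i$ pulls out a factor of $(-1)^k$ from the $k$ arguments. Unwinding the definition of $t_{A_C}$ then contributes the sign $(-1)^{\spe{\tau}{v}+k+1}$, and the two factors combine to $(-1)^{\spe{\tau}{v}+1}$. The result is
\[
\m_{k,\beta}(v_1,\ldots,v_k) = (-1)^{\spe{\tau}{v}+1}\,\bar\c\bigl(\m_{k,\beta}(v_k,\ldots,v_1)\bigr),
\]
using $\bar\c^{-1}=\bar\c$.

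The remaining step is to show that $\bar\c$ is the identity on the element $\m_{k,\beta}(v_k,\ldots,v_1)\in A_C=\Lambda V$. Since $V$ is concentrated in odd degree, each $|v_i|$ is odd, so $\sum_i |v_i|\equiv k\pmod 2$. As $\m_{k,\beta}$ has degree $2-k$, the total degree of $\m_{k,\beta}(v_k,\ldots,v_1)$ is even, forcing it to lie in $\bigoplus_{l\text{ even}}\Lambda^l V$. Because $\bar\c$ is the algebra homomorphism induced by $-\Id_V$, it acts on $\Lambda^l V$ as multiplication by $(-1)^l$; this is $+1$ on the even part. Consequently $\bar\c\bigl(\m_{k,\beta}(v_k,\ldots,v_1)\bigr)=\m_{k,\beta}(v_k,\ldots,v_1)$, and we obtain
\[
\m_{k,\beta}(v_1,\ldots,v_k) = -(-1)^{\spe{\tau}{v}}\,\m_{k,\beta}(v_k,\ldots,v_1),
\]
which is precisely the anti-symmetry condition.

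The only conceptual subtlety is the second step: recognizing that the parity constraint on $V$ forces $\m_{k,\beta}(v_k,\ldots,v_1)$ into the $\bar\c$-fixed subspace of $\Lambda V$. Everything else is a sign bookkeeping exercise driven by Lemmas~\ref{lm:psdm} and~\ref{lm:cm}.
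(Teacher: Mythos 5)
Your proposal is correct and follows essentially the same route as the paper's proof. The paper invokes Lemma~\ref{lm:psdm} directly where you invoke Lemma~\ref{lm:cm}, but the latter is explicitly just a reformulation of the former, and the two key observations — that $\bc$ picks up a factor $(-1)^k$ on $v_1,\dots,v_k\in V$, and that the parity constraint forces $\m_{k,\beta}(v_k,\dots,v_1)$ into the $\bc$-fixed even part of $\Lambda V$ — are identical; the paper merely simplifies a step further by noting $\spe{\tau}{v_1,\ldots,v_k}\equiv 0\pmod 2$ before matching against the definition, whereas you carry the sign through unchanged.
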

\begin{proof}
By minimality, $A_C = \overline C.$ So, $\overline C \simeq \Lambda V$ for a graded vector space $V$ concentrated in odd degree, and $\bc$ acts on $\overline C$ as the map induced by $-\Id_V.$ For $v_1,\ldots, v_k \in V,$ Lemma~\ref{lm:psdm} gives
\begin{equation}\label{eq:sdmkb}
(-1)^{k}\m_{k,\beta}(v_1,\ldots,v_k) = \m_{k,\beta}(\bc(v_1),\ldots,\bc(v_k)) = (-1)^{\spe{\tau}{v_1,\ldots,v_k} + k+1}\bc(\m_{k,\beta}(v_k,\ldots,v_1)).
\end{equation}
On the other hand, $\m_{k,\beta}$ has degree $2-k,$ and $|v_j| = 1 \pmod 2.$ So,
\[
|\m_{k,\beta}(v_k,\ldots,v_1)| = 0 \pmod 2,
\]
and it follows that
\begin{equation}\label{eq:cmkbv}
\bc(\m_{k,\beta}(v_k,\ldots,v_1)) = \m_{k,\beta}(v_k,\ldots,v_1).
\end{equation}
Combining equations~\eqref{eq:sdmkb} and~\eqref{eq:cmkbv} with the observation that $\spe{\tau}{v_1,\ldots,v_k} \cong 0 \pmod 2$ gives the desired result.
\end{proof}

The following lemma does not use the involution $\c$ except where mentioned explicitly.
\begin{pr}\label{pr:obt}
Suppose $[\m_{k,\beta}] = 0 \in HH^2(A_C)$ for all $k + \beta = \nu(\m).$ Then there exists a $G$ gapped formal diffeomorphism $\f$ of $C$ satisfying the following conditions:
\begin{enumerate}
\item\label{it:fmnum}
$(\f^*\m)_{2,0} = \m_{2,0}$ and $\nu(\f^*\m) > \nu(\m).$
\item \label{it:fnum}
$\f_{1,0} = \Id_{\overline C}$ and $\nu(\f) \geq \nu(\m) - 1.$
\end{enumerate}
If $\c : C \to C$ is a $\k$ structured involution such that $\m$ is $\c$ self-dual, we can choose $\f$ to be $\c$ self-dual as long as $\ch \k \neq 2.$
\end{pr}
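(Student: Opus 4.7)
Set $\nu_0 = \nu(\m)$. The strategy is to use the hypothesis $[\m_{k,\beta}] = 0 \in HH^2(A_C)$ to choose components of $\f$ at total order $\nu_0 - 1$ that cancel the obstructions $\m_{k,\beta}$ appearing in $\f^*\m$ at order $\nu_0$, while leaving the lower-order terms of $\m$ untouched. Explicitly, set $\f_{1,0} = \Id_{\overline C}$. For each pair $(k,\beta)$ with $k+\beta = \nu_0$, Lemma~\ref{lm:cl} says that $\m_{k,\beta}$ is a closed Hochschild $2$-cochain of $A_C$, and by hypothesis it is exact, so I can pick $\f_{k-1,\beta} \in CC^1(A_C)$ with $\b(\f_{k-1,\beta}) = -\m_{k,\beta}$. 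Set every remaining component of $\f$ to zero. Since $\nu_0 > 2$, every nonzero $\f_{k',\beta'}$ other than $\f_{1,0}$ has $\beta' > 0$, so $\|\f_0\| < 1$ and $\f_1$ is invertible; thus $\f$ is a $G$ gapped formal diffeomorphism with $\f_{1,0} = \Id_{\overline C}$ and $\nu(\f) \geq \nu_0 - 1$, giving~\ref{it:fnum}.

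To check~\ref{it:fmnum}, I would apply Lemma~\ref{lm:fmb}: its first assertion gives $(\f^*\m)_{2,0} = \m_{2,0}$, and its final strict-inequality claim yields $(\f^*\m)_{k',\beta'} = 0$ for all $(k',\beta')$ with $2 < k'+\beta' < \nu_0$. These vanishings verify the induction hypothesis of the main formula of Lemma~\ref{lm:fmb} at level $k+\beta = \nu_0$, and inducting on $k$ within that level produces
\[
(\f^*\m)_{k,\beta} = \m_{k,\beta} + \b(\f_{k-1,\beta}) = 0
\]
by the choice of $\f_{k-1,\beta}$. Hence $\nu(\f^*\m) > \nu_0 = \nu(\m)$.

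For the $\c$ self-dual case, I would symmetrize each $\f_{k-1,\beta}$. By Lemma~\ref{lm:chom}, $\bc: A_C \to A_C^{op}$ is a $\k$-algebra homomorphism, so by Lemmas~\ref{lm:nat} and~\ref{lm:t} the composition $\bc^* \circ t_{A_C}$ is a chain map $CC^*(A_C) \to CC^*(A_C)$. A direct calculation using $\bc^2 = \Id$ and the identity $\spe{\tau}{\alpha_1,\ldots,\alpha_k} \equiv \spe{\tau}{\alpha_k,\ldots,\alpha_1} \pmod 2$ shows that $(\bc^* \circ t_{A_C})^2 = \Id$. By Lemma~\ref{lm:cm}, each $\m_{k,\beta}$ is a fixed point of $\bc^* \circ t_{A_C}$. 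Thus for any $\eta$ with $\b(\eta) = -\m_{k,\beta}$, the symmetrization $\tilde\eta = \tfrac{1}{2}\bigl(\eta + \bc^* t_{A_C}(\eta)\bigr)$ still satisfies $\b(\tilde\eta) = -\m_{k,\beta}$ and is fixed by $\bc^* \circ t_{A_C}$. Replacing each $\f_{k-1,\beta}$ with such a $\tilde\eta$ preserves every property established above, and by Lemma~\ref{lm:cf} it makes $\f$ self-dual. The factor $\tfrac{1}{2}$ is exactly where the hypothesis $\ch \k \neq 2$ enters. The most error-prone step will be the sign bookkeeping needed to verify that $\bc^* \circ t_{A_C}$ is a chain-map involution on the nose; once that is in place, the rest is purely formal obstruction theory built on Lemmas~\ref{lm:cl},~\ref{lm:fmb},~\ref{lm:cm}, and~\ref{lm:cf}.
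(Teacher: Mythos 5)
Your argument is correct and matches the paper's own proof essentially step for step: choosing Hochschild primitives $\f_{k-1,\beta}$ with $\b(\f_{k-1,\beta}) = -\m_{k,\beta}$, verifying conditions~\ref{it:fmnum} and~\ref{it:fnum} via Lemma~\ref{lm:fmb} together with induction on $k+\beta$ and then on $k$, and symmetrizing with $\tfrac12(\Id + \bc^*t_{A_C})$ to achieve self-duality. One small inaccuracy: the claim that ``every nonzero $\f_{k',\beta'}$ other than $\f_{1,0}$ has $\beta' > 0$'' is not quite right, since $\f_{\nu_0-1,\,0}$ has $\beta' = 0$; and a formal diffeomorphism requires $\f_0 = 0$ on the nose, not merely $\|\f_0\| < 1$. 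Both issues resolve the same way: the only potentially nonzero component of $\f_0$ is $\f_{0,\nu_0-1}$, and since $\m$ is minimal ($\m_{1,\beta}=0$ for all $\beta$) one takes $\f_{0,\nu_0-1}=0$, while $\f_1$ is invertible because $\f_{1,0}=\Id$ and its sole correction $\f_{1,\nu_0-2}$ has positive $\beta$. The explicit verification that $\bc^*\circ t_{A_C}$ is an involution is implicit in the paper's ``we may assume'' and is a useful addition.
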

\begin{proof}
For $k + \beta = \nu(\m),$ choose a Hochschild $1$-cochain $\f_{k-1,\beta} : A_C[1]^{\otimes k-1} \to A_C$ such that
\begin{equation}\label{eq:bfm}
\b(\f_{k-1,\beta}) = -\m_{k,\beta}.
\end{equation}
In the self-dual case, possibly replacing $\f_{k-1,\beta}$ with
\[
\frac{1}{2}\left(\bc^*(t_{A_C}(\f_{k-1,\beta})) + \f_{k-1,\beta}\right),
\]
we may assume that
\begin{equation}\label{eq:fkbsd}
\bc^*(t_{A_C}(\f_{k-1,\beta})) = \f_{k-1,\beta}.
\end{equation}
We claim that after this replacement, equation~\eqref{eq:bfm} still holds. Indeed,
Lemmas~\ref{lm:nat},~\ref{lm:t} and~\ref{lm:cm}, imply that
\begin{equation*}%\label{eq:bfmsd}
\b(\bc^*(t_{A_C}(\f_{k-1,\beta}))) = \bc^*(t_{A_C}(\b(\f_{k-1,\beta}))) = -\bc^*(t_{A_C}(\m_{k,\beta}))= -\m_{k,\beta}.
\end{equation*}

Let $\f : C \to C$ be the $G$ gapped formal diffeomorphism determined by $\f_{1,0} = \Id_{\overline C},$ the above defined $\f_{k,\beta}$ for $k + \beta = \nu(\m)-1,$ and $\f_{k,\beta} = 0$ otherwise. Thus, condition~\ref{it:fnum} holds.
In the self-dual case, Lemma~\ref{lm:cf} and equation~\eqref{eq:fkbsd} imply that $\f$ is $\c$ self-dual.
Lemma~\ref{lm:pbflm} implies that $\f^*\m$ is flat and minimal. Lemma~\ref{lm:fmb} gives
\[
(\f^*\m)_{2,0} = \m_{2,0}.
\]
Furthermore, Lemma~\ref{lm:fmb} with the inequalities~\eqref{eq:nuc} strict and induction first on $k + \beta$ and then on $k$ imply
\[
(\f^*\m)_{k,\beta} = 0, \quad 2 < k + \beta < \nu(\m).
\]
Finally, when $k + \beta = \nu(\m)$, Lemma~\ref{lm:fmb}, equation~\eqref{eq:bfm} and induction on $k$ give
\[
(\f^*\m)_{k,\beta}(\alpha_1,\ldots,\alpha_k) = \m_{k,\beta}(\alpha_1,\ldots,\alpha_k) + \b(\f_{k-1,\beta})(\alpha_1,\ldots,\alpha_k) = 0.
\]
Condition~\ref{it:fmnum} follows.
\end{proof}

\subsection{Anti-symmetric \texorpdfstring{$A_\infty$}{A-infinity} algebras}

\begin{tm}\label{tm:asainf}
Assume $\ch \k \neq 2.$ A weakly minimal anti-symmetric $A_\infty$ algebra $(C,\m)$ with involution $\c$ is quasi-isomorphic to the $A_\infty$ algebra $(C,\m^\infty)$ where
\[
\m_k^\infty =
\begin{cases}
\m_{2,0} \otimes \Id_{\Nov}, & k = 2, \\
0, & k \neq 2.
\end{cases}
\]
\end{tm}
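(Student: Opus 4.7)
The plan is to iteratively simplify $\m$ by pulling back via $\c$ self-dual formal diffeomorphisms until only $\m_{2,0}$ survives. First I would apply Lemma~\ref{lm:fmbc} to upgrade the weakly minimal anti-symmetric structure $(C,\m)$ to a flat minimal one; in particular $\m_0 = \m_1 = 0$, the underlying algebra $A_C$ is identified with $\overline C \simeq \Lambda V$, and $\bc$ acts on it as the map induced by $-\Id_V$. The remaining task is then to kill the operations $\m_{k,\beta}$ with $k+\beta > 2$ while preserving $\m_{2,0}$, flatness, minimality, $\c$ self-duality, and anti-symmetry.

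Inductively I construct a sequence of $\c$ self-dual $G$ gapped formal diffeomorphisms $\f^{(n)}$ with $\f^{(n)}_{1,0} = \Id$: setting $\m^{(0)} = \m$ and $\m^{(n)} = (\f^{(n)})^*\m^{(n-1)}$, I maintain the invariants that $\m^{(n)}$ is flat, minimal, $\c$ self-dual, anti-symmetric, satisfies $\m^{(n)}_{2,0} = \m_{2,0}$, and has $\nu(\m^{(n)}) > \nu(\m^{(n-1)})$. To produce $\f^{(n)}$, I observe that each leading cochain $\m^{(n-1)}_{k,\beta}$ with $k+\beta = \nu(\m^{(n-1)})$ is a closed Hochschild $2$-cocycle of $A_C$ by Lemma~\ref{lm:cl} and is anti-symmetric by Lemma~\ref{lm:as}. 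Since $\ch\k\neq 2$, Proposition~\ref{pr:ex} shows it is exact, so the hypothesis of Proposition~\ref{pr:obt} is met. The self-dual version of Proposition~\ref{pr:obt} then yields the desired $\f^{(n)}$, and Lemmas~\ref{lm:pbflm} and~\ref{lm:pbsd} ensure that flatness, minimality, and $\c$ self-duality persist. Because $\m^{(n)}_{2,0}$ is unchanged, $A_C$ and the action of $\bc$ on it also persist, so anti-symmetry survives and the induction continues.

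Next I pass to the limit $\g^\infty = \f^{(1)} \circ \f^{(2)} \circ \cdots$. Since $\nu(\f^{(n)}) \geq \nu(\m^{(n-1)}) - 1 \to \infty$, for every $E > 0$ only finitely many of the $\f^{(n)}$ contribute modulo pairs $(k,\beta)$ with $k+\beta \leq E$, so the composition converges in the $G$ gapped sense to a $\c$ self-dual formal diffeomorphism $\g^\infty$ of $C$ with $\g^\infty_{1,0} = \Id$. By construction $(\g^\infty)^*\m$ has $\nu = \infty$ together with flatness and minimality and preserves $\m_{2,0}$; hence $(\g^\infty)^*\m = \m^\infty$. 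By the definition of pull-back, $\g^\infty$ is an $A_\infty$ homomorphism $(C,\m^\infty) \to (C,\m)$, and since $\g^\infty_{1,0} = \Id$ it is tautologically a quasi-isomorphism.

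The main obstacle is producing, at each inductive stage, an exact primitive for the obstruction that is itself $\c$ self-dual, so that the resulting formal diffeomorphism does not destroy anti-symmetry. This is exactly where the hypothesis $\ch\k\neq 2$ is used: it permits the application of Proposition~\ref{pr:ex} to the anti-symmetric obstruction cocycle, and it permits the symmetrization $\tfrac{1}{2}\bigl(\bc^*(t_{A_C}(\f_{k-1,\beta})) + \f_{k-1,\beta}\bigr)$ built into the proof of Proposition~\ref{pr:obt}. The convergence of the infinite composition is then a routine feature of the complete $G$ gapped framework and requires no additional input.
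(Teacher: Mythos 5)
Your proposal is correct and follows essentially the same approach as the paper's proof: invoke Lemma~\ref{lm:fmbc} to get flatness and minimality, iteratively kill the leading obstruction cocycles via Lemmas~\ref{lm:cl}, \ref{lm:as}, Proposition~\ref{pr:ex}, and the self-dual version of Proposition~\ref{pr:obt}, preserve flatness, minimality, self-duality, and anti-symmetry via Lemmas~\ref{lm:pbflm} and~\ref{lm:pbsd} and the unchanged $\m_{2,0}$, and pass to the limit using $\nu(\m^{(n)})\to\infty$ and the $G$ gapped finiteness. You also correctly identify that $\ch\k\neq 2$ enters both in Proposition~\ref{pr:ex} and in the symmetrization step of Proposition~\ref{pr:obt}.
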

\begin{proof}
By Lemma~\ref{lm:fmbc}, the $A_\infty$ structure $\m$ is flat and minimal.
We inductively construct a sequence of flat minimal $\c$ self-dual $A_\infty$ structures $\m^{j}$ on $C$ with fixed underlying algebra $A_C \simeq \Lambda V,$ and homotopy equivalences
\[
\g^j : (C,\m^{j}) \to (C,\m)
\]
as follows. Take
\[
\m^{0} = \m, \qquad \g^0 = \mathfrak{id}.
\]
Given $\m^{j},\i^{j},$ construct $\m^{j+1},\i^{j+1},$ as follows. Recall the definition of the function $\nu$ from~\eqref{eq:nu} and~\eqref{eq:nuf}. For all $k,\beta,$ such that $k + \beta = \nu(\m^{j}),$ Lemma~\ref{lm:cl} asserts the maps $\m_{k,\beta}^{j}$ are closed Hochschild cochains of $A_C.$ By Lemma~\ref{lm:as} and Proposition~\ref{pr:ex}, we have
\[
[\m_{k,\beta}^{j}] = 0 \in HH^2(A_C).
\]
So, Proposition~\ref{pr:obt} gives a $\c$ self-dual $G$ gapped formal diffeomorphism $\f^j : C \to C,$ such that
\begin{equation}\label{eq:fj}
\f^j_{1,0} = \Id_{\overline C}, \qquad \qquad \nu(\f^j) \geq \nu(\m^{j})-1,
\end{equation}
and
\begin{equation}\label{eq:fjmj}
((\f^j)^*\m^j)_{2,0} = \m^j_{2,0}, \qquad \qquad \nu((\f^j)^*\m^j)> \nu(\m^{j}).
\end{equation}
Define
\[
\m^{j+1} = (\f^j)^*\m^j, \qquad \g^{j+1} = \g^j \circ \f^j.
\]
It follows from Lemma~\ref{lm:pbflm} that $\m^{j+1}$ is flat and minimal and from Lemma~\ref{lm:pbsd} that $\m^{j+1}$ is $\c$ self-dual. It follows from equation~\eqref{eq:fjmj} that the underlying algebra of $(C,\m^j)$ is $A_C.$

Equation~\eqref{eq:fjmj} gives $\nu(\m^{j+1}) > \nu(\m^j),$ so the finiteness property of $G$ implies that
\begin{equation}\label{eq:limnu}
\lim_{j \to \infty} \nu(\m^j) = \infty.
\end{equation}
Equations~\eqref{eq:fj},~\eqref{eq:Ggclaw} and~\eqref{eq:limnu}, imply that for fixed $k,$ as $j \to \infty,$ the maps $\g^j_k,\m^j_k,$ converge with respect to the non-Archimedean norm of $C.$ Define
\[
\m^\infty_k = \lim_{j \to \infty} \m^j_k, \qquad \qquad \g^\infty_k = \lim_{j \to \infty} \g^j_k.
\]
Thus $\m^\infty = \{\m^\infty_k\}$ is an $A_\infty$ structure and $\g^\infty = \{\g^\infty_k\}$ is an $A_\infty$ quasi-isomorphism $(C,\m^\infty) \to (C,\m).$ Moreover, equation~\eqref{eq:fjmj} implies $\m^\infty_{2,0} = \m_{2,0}$ and equation~\eqref{eq:limnu} implies that $\m^\infty_{k,\beta} = 0$ for $(k,\beta) \neq (2,0).$
\end{proof}

\begin{proof}[Proof of Theorem~\ref{tm:main}]
Combine Proposition~\ref{pr:aslm} and Theorem~\ref{tm:asainf}.
\end{proof}

\appendix
\section{Lagrangian fibrations}\label{sec:lf}
Let $(X,\omega)$ be a symplectic manifold of dimension $2n.$ Let $f: X \to B$ be a Lagrangian fibration. By definition, for each $b \in B,$ the fiber $L_b = f^{-1}(b)$ contains a relatively open dense subset that is a smooth Lagrangian submanifold of $X.$ Denote by $U_b \subset L_b$ the maximal such subset. The fiber $L_b$ is \textbf{smooth} if $U_b = L_b.$ The \textbf{smooth locus} of $X$ is given by
\[
X^{sm} = \bigcup_{b \in B} U_b.
\]
The \textbf{singular locus} is the complement of the smooth locus $X^{sing} = X \setminus X^{sm}.$
We say the singular locus has \textbf{codimension at least $k$} if there exist a countable number of smooth manifolds $M_i$ with $\dim M_i \leq 2n - k$ and smooth maps $g_i : M_i \to X$ such that
\[
X^{sing} \subset \bigcup_i g_i(M_i).
\]
Denote by $LG(X) \to X$ the Lagrangian Grassmannian bundle over $X.$ That is, the fiber of $LG(X)$ over $p$ is the Lagrangian Grassmannian of $T_pX.$ The fibration $f : X \to B$ gives rise to a section
\[
s_f : X^{sm} \to LG(X^{sm})
\]
given by $s_f(p) = T_pL_{f(p)}.$

\begin{df}
A Lagrangian fibration $f: X \to B$ is \textbf{tame} if the section $s_f$ is continuous.
\end{df}

\begin{pr}\label{pr:m0}
Suppose $f:X \to B$ is a tame Lagrangian fibration with singular locus of codimension at least $3$. Then the first Chern class of $X$ is torsion. Moreover, the Maslov class of each smooth fiber vanishes.
\end{pr}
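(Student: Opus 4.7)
The plan is to exploit the continuous Lagrangian subbundle $\mathcal{L} := \im(s_f) \subset TX|_{X^{sm}}$, which by tameness is defined everywhere on $X^{sm}$ and restricts to $TL_b$ on each smooth fiber $L_b$. Fixing an $\omega$-compatible almost complex structure $J$, the bundle $TX|_{X^{sm}}$ splits as $\mathcal{L} \oplus J\mathcal{L}$, and the map $v \otimes (a+bi) \mapsto av + bJv$ gives a complex bundle isomorphism $\mathcal{L} \otimes_\R \C \cong TX|_{X^{sm}}$. Since any complexification is self-conjugate, the identity $c_1(\bar E) = -c_1(E)$ shows $c_1(TX|_{X^{sm}})$ is $2$-torsion in integer cohomology, hence vanishes rationally.

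To deduce that $c_1(TX) \in H^2(X;\Z)$ is torsion, I would show the restriction $H^2(X;\mathbb{Q}) \to H^2(X^{sm};\mathbb{Q})$ is injective, i.e., $H^2(X, X^{sm}; \mathbb{Q}) = 0$. By Poincar\'e--Lefschetz duality this group pairs with the Borel--Moore homology $H^{BM}_{2n-2}(X^{sing}; \mathbb{Q})$. Since $X^{sing}$ is covered by countably many smooth images $g_i(M_i)$ with $\dim M_i \leq 2n-3$, its \v Cech/Borel--Moore cohomological dimension over $\mathbb{Q}$ is at most $2n-3$, so this group vanishes. Hence $c_1(TX) \otimes \mathbb{Q} = 0$, and $c_1(TX)$ is torsion.

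For the Maslov class, the task is to show that every $u:(D^2, \partial D^2) \to (X, L_b)$ has Maslov index $\mu(u) = 0$. Since $u(\partial D^2) \subset L_b \subset X^{sm}$ and $X^{sm}$ is open, one first smooths $u$ keeping its boundary in $L_b$, then perturbs the interior. The perturbation is achieved by a Baire category argument: for each $i$, the subspace of smooth maps $(D^2,\partial D^2) \to (X,L_b)$ transverse (rel boundary) to $g_i:M_i\to X$ is open and dense, and since $\dim D^2 + \dim M_i = 2 + (2n-3) < 2n$, transversality forces disjointness. Thus $u$ is homotopic rel boundary to a smooth disk $\tilde u$ with image in $X^{sm}$. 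Then $\tilde u^* \mathcal{L}$ is a Lagrangian subbundle of $\tilde u^* TX$ over all of $D^2$, restricting on the boundary to $\tilde u|_{\partial D^2}^* TL_b$. Using $\tilde u^* TX \cong \tilde u^*\mathcal{L} \otimes_\R \C$ and the fact that $\tilde u^* \mathcal{L}$ is trivializable over the contractible disk, one obtains a global trivialization of $\tilde u^* TX$ in which the Lagrangian boundary condition is the constant $\R^n \subset \C^n$. Hence $\mu(\tilde u) = 0$, and $\mu(u) = 0$ by homotopy invariance.

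The main obstacle is executing both applications of the codimension-$\geq 3$ hypothesis rigorously: the cohomological one (injectivity of restriction to $X^{sm}$) and the transversality one (perturbing disks off $X^{sing}$). Both are standard in spirit, but require care since $X^{sing}$ is given only as a countable union of smooth images rather than a smooth stratified subspace. The transversality step, in particular, hinges on choosing an appropriate topology on the space of maps and invoking Baire category to handle the countable family $\{g_i\}$ simultaneously; at codimension $2$ the argument would break down, so the hypothesis is sharp.
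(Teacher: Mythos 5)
Your core geometric input is the same as the paper's: tameness provides a continuous Lagrangian subbundle $\mathcal{L} = \im(s_f)$ of $TX|_{X^{sm}}$, and this is what makes the first Chern class $2$-torsion on $X^{sm}$ and trivializes the Maslov data. You express this via the complex-bundle isomorphism $TX|_{X^{sm}} \cong \mathcal{L}\otimes_\R \C$ (so $c_1 = -c_1$ because complexifications are self-conjugate), while the paper packages the same fact as a continuous nonvanishing section $\Omega^2_f$ of $K^2_{X,J}|_{X^{sm}}$; these are equivalent, since $\Omega_f^2$ is essentially the determinant squared of $\mathcal{L}$. For the Maslov class you also do exactly what the paper does: perturb the disk rel boundary into $X^{sm}$ by a Baire-category transversality argument against the countably many $g_i$, and observe that $\tilde u^*\mathcal{L}$ gives a Lagrangian subbundle of $\tilde u^*TX$ over the whole disk, constant in the induced trivialization, hence Maslov index zero. (The paper phrases this via the Lagrangian angle $\theta\equiv 0$ determined by $\Omega_f^2$, but it is the same computation.)

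The one place where you genuinely diverge — and where there is a gap — is the descent from $2c_1(TX|_{X^{sm}})=0$ to torsionness of $c_1(TX)\in H^2(X;\Z)$. Your appeal to Poincar\'e--Lefschetz/Alexander duality to identify $H^2(X,X^{sm};\Q)$ with $H^{BM}_{2n-2}(X^{sing};\Q)$ requires $X^{sing}$ to be a closed subset of $X$ (equivalently $X^{sm}$ open). That is not among the hypotheses: the codimension condition only says $X^{sing}$ is \emph{contained in} a countable union of images $g_i(M_i)$, which need not be closed (the $M_i$ may be noncompact and the union may not be closed even if each piece were). By contrast, the paper's route is immune to this: it tests $c_1(TX)$ against maps $h : \Sigma \to X$ from closed oriented surfaces (these detect $H_2(X;\Z)$ by representability of degree-$2$ homology), uses the same transversality argument you already use for disks to push $h$ into $X^{sm}$, and then concludes $c_1(h^*TX)=0$ because $H^2(\Sigma;\Z)\cong\Z$ is torsion-free. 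So the fix is cheap: replace the cohomological-injectivity step by the surface-testing argument, reusing the Baire-category perturbation you already set up. The rest of your proof is correct.
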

\begin{proof}
To prove $c_1(X)$ is torsion, it suffices to show that for every closed two dimensional manifold $\Sigma$ and every homotopy class of smooth map $h: \Sigma \to X,$ we have $c_1(h^* TX) = 0.$ Similarly, to prove the Maslov class of a smooth fiber $L_b$ vanishes, it suffices to prove that for every two dimensional manifold with boundary $\Sigma$ and every homotopy class of smooth map $h:(\Sigma,\partial \Sigma) \to (X,L_b),$ the Maslov index of the bundle pair
\[
(h^*TX, h^*TL_b) \to (\Sigma,\partial \Sigma)
\]
vanishes. Since the singular locus has codimension at least $3,$ standard transversality arguments allow us to assume that $h(\Sigma) \subset X^{sm}.$

Choose an arbitrary $\omega$ compatible almost complex structure $J$ on $X,$ and denote by $g_J$ the Riemannian metric given by $g_J(\xi,\eta) = \omega(\xi,J\eta).$ Decompose
\[
T^*X \otimes \C = T_J^{1,0}X \oplus T_J^{0,1}X
\]
where $T_J^{1,0}X,T_J^{0,1}X,$ are the $\sqrt{-1},-\sqrt{-1},$ eigenspaces of $J \otimes \Id_\C$ respectively. Let
\[
K_{X,J} = \Lambda^n_\C T^{1,0}_JX
\]
and
\[
K_{X,J}^2 = K_{X,J} \otimes K_{X,J}.
\]
We define a non-vanishing section $\Omega^2_f$ of $K_{X,J}^2$ over $X^{sm}$ as follows. Let $p \in X^{sm}.$ Let $e_1,\ldots,e_n,$ be a $g_J$ orthonormal basis of $T_pL_{f(p)}$, and denote by $e_1^*,\ldots,e_n^*,$ the dual basis of $T_p^*L_{f(p)}.$ For $j = 1,\ldots,n,$ denote by $\tilde e_j^*$ the unique extension of the linear functional $e_j^*$ to $T_pX$ that vanishes on the $g_J$ orthogonal complement of $T_pL.$ Then,
\[
\Omega^2_f(p) = \left((\tilde e_1^* - \sqrt{-1}J\tilde e_1^*) \wedge \ldots \wedge (\tilde e_n^* - \sqrt{-1} J\tilde e_n^*)\right)^{\otimes 2}.
\]
This definition does not depend on the choice of basis. Since $f$ is tame, the section $\Omega_f^2$ is continuous. It follows immediately that $2c_1(X^{sm}) = 0,$ which implies the first claim.

Now, consider a map $h :(\Sigma,\partial \Sigma) \to (X,L_b)$ as above. Denote by $\vol_{L_b}$ the volume form on $L_b$ with respect to $g_J.$ Let $\theta : \partial \Sigma \to S^1$ be the unique continuous function defined at $z \in \partial\Sigma$ by
\[
\Omega^2_f|_{T_{h(z)}L_b} = e^{\sqrt{-1}\theta(z)}\vol_{L_b}^{\otimes 2}|_{T_{h(z)}L_b}.
\]
Then, the Maslov index of the bundle pair $(h^*TX,h^*TL_b) \to (\Sigma,\partial\Sigma)$ is negative the sum of the winding numbers of $\theta$ on each boundary component of $\partial\Sigma.$ However, it is immediate from the definition of $\Omega_f^2$ that $\theta\equiv 0 \pmod{2\pi}.$ So, the Maslov index is zero.
\end{proof}

\begin{ex}\label{ex:cbm}
The fibrations constructed by Cast\~ano-Bernard and Matessi~\cite{CM09} are piecewise smooth and thus tame. Moreover, the singular locus has codimension at least $4.$ So, Proposition~\ref{pr:m0} applies.
\end{ex}

\section{Mirror analog of main theorem}\label{sec:ma}
In this appendix, we give a proof Proposition~\ref{pr:ag}.
Let $k$ be a field of characteristic zero and let $K$ be the field of formal Laurent series $k((t)).$  Let $\Y \to \spec(K)$ be a smooth projective scheme. Let $P$ be a $K$ point of $\Y$. Denote by $R$ the ring of formal power series $k[[t]]$, and denote by $\m \subset R$ the maximal ideal.
\begin{lm}\label{lm:rs}
There is a smooth projective scheme $\widetilde \Y \to \spec(R)$ with generic fiber $\Y$ such that, denoting by $\widetilde P$ the $R$ point of $\widetilde \Y$ obtained by taking the closure of $P,$ the closed point $P_0 = \widetilde P \times_{\spec(R)} \spec(k)$ is a regular point of the closed fiber $\Y_0 = \widetilde \Y \times_{\spec(R)} \spec(k)$.
\end{lm}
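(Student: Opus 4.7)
The plan is to build any projective $R$-model of $\Y,$ resolve it to a regular scheme by standard desingularization techniques, and then observe that the section extending $P$ automatically lands at a regular point of the special fiber for purely local reasons. The nontrivial ingredient is the existence of resolution of singularities in the setting of excellent schemes over $R,$ while the final step is a short calculation in a regular local ring.

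First I would fix a projective embedding $\Y \hookrightarrow \Proj^N_K$ coming from a very ample line bundle, and take $\Y'$ to be the scheme-theoretic closure of $\Y$ in $\Proj^N_R.$ This produces a projective flat $R$-scheme with generic fiber $\Y.$ Since $R$ is a complete Noetherian local ring of equal characteristic zero, any $R$-scheme of finite type is excellent. Hironaka's theorem, as extended to this quasi-excellent equicharacteristic zero setting by Temkin, then provides a projective birational morphism $\widetilde \Y \to \Y'$ which is an isomorphism over the regular locus of $\Y'$ (in particular over the generic fiber $\Y$) and with $\widetilde \Y$ regular. By the valuative criterion of properness applied to $\widetilde \Y \to \spec R,$ the $K$-point $P$ extends uniquely to an $R$-point $\widetilde P : \spec R \to \widetilde \Y,$ and I set $P_0$ to be the image of the closed point.

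To finish, it remains only to verify that $P_0$ is a regular point of $\Y_0,$ and this I would do by a local argument on $A = \O_{\widetilde\Y, P_0}.$ The ring $A$ is regular local, hence a UFD, so one can factor the image of the uniformizer as $t = u \prod_{i=1}^r f_i^{e_i}$ with $u \in A^\times$ and the $f_i$ pairwise nonassociate primes, each lying in $\m_A.$ Applying the local homomorphism $\phi : A \to R$ corresponding to $\widetilde P,$ each $\phi(f_i)$ is necessarily nonzero (otherwise $\phi(t)$ would vanish) and lies in $(t) \subset R,$ hence has the form $v_i t^{a_i}$ with $v_i \in R^\times$ and $a_i \geq 1.$ The relation $\phi(t) = t$ then forces $\sum_i e_i a_i = 1,$ so $r = 1$ and $e_1 = a_1 = 1.$ In particular, $\phi(f_1) = v_1 t$ has $t$-valuation exactly one, so $f_1 \notin \m_A^2,$ and $f_1$ (hence $t$) is a regular parameter of $A.$ Therefore $A/(t) = \O_{\Y_0, P_0}$ is regular, as required.

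The main obstacle is really just the invocation of resolution of singularities in the excellent setting rather than over a field; once this is granted, the rest of the argument is elementary and local, and in fact the conclusion that $P_0$ is regular in the special fiber holds for the specialization of any section through any regular projective $R$-model.
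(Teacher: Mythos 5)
Your argument is correct and takes a genuinely different, and in fact leaner, route than the paper. The paper performs two rounds of desingularization: first it resolves the $R$-model so that the total space is regular \emph{and} the reduced closed fiber is a simple normal crossings divisor, and then it invokes a theorem of Temkin on embedded resolution to arrange that the strict transform of the section $\widetilde P$ has simple normal crossings with $\Y_0^{\red}$. This SNC configuration is what lets the paper choose a regular system of parameters $y_1,\ldots,y_m$ with $\I_{\widetilde P}=(y_2,\ldots,y_m)$ and $\I_{\Y_0^{\red}}=(y_1)$, so that $r^\#t=uy_1^k$, after which the section gives $k=1$. You bypass the entire second step and the SNC normalization: a single resolution gives a regular projective $R$-model, and you then observe that in the UFD $A=\O_{\widetilde\Y,P_0}$ the factorization of $t$ into primes, pushed through the section $\phi:A\to R$, immediately forces $t=uf_1$ with $f_1$ prime and $f_1\notin\m_A^2$, hence $A/(t)$ regular. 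Your valuation argument is the same one the paper runs at the end, but you do not need $t$'s prime factor to be one of the coordinates of an SNC system, only that the section sees it with multiplicity one; the UFD structure of a regular local ring gives you the factorization for free. The paper's extra SNC control buys nothing for the statement of the lemma (and is not used elsewhere), so your proof is a clean simplification. The only item to be careful about is what is meant by ``smooth projective scheme over $R$'' in the lemma's conclusion: it must be read as a regular scheme that is projective over $R$, not as a smooth morphism to $\spec R$ (the special fiber is certainly not smooth in general); both the paper's proof and yours produce exactly that.
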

\begin{proof}
Since $\Y$ is projective, it can be embedded as a closed subscheme of the projective space $\Proj^N_K,$ which in turn is an open subscheme of the projective space $\Proj^N_R.$ Let $\widetilde \Y^1$ be the closure of $\Y$ in $\Proj^N_R$ with the reduced induced subscheme structure. By Hironaka's work on the resolution of singularities~\cite{Hir64}, we can find a smooth projective scheme $\widetilde \Y^2$ over $R$ and a projective morphism of $R$ schemes $r_1 : \widetilde \Y^2 \to \widetilde \Y^1$ such that $r_1$ induces an isomorphism on the generic fiber and the underlying reduced scheme of the closed fiber $\Y^2_0 \subset \widetilde \Y^2$ is a simple normal crossings divisor. Let $\widetilde P^2$ be the closure of $r_1^{-1}(P)$ in $\widetilde \Y^2.$ By~\cite[Theorem 1.1.9]{Tem18}, we can find another smooth projective $R$ scheme $\widetilde \Y$ and a projective morphism of $R$ schemes $r_2 : \widetilde \Y \to \widetilde \Y^2$ obtained from a sequence of blowups with the following properties:
\begin{itemize}
\item
The morphism $r_2$ induces an isomorphism on the generic fibers.
\item
The underlying reduced scheme of the closed fiber $\Y_0^\red \subset \widetilde \Y$ is a simple normal crossings divisor.
\item
The strict transform under $r_2$ of $\tilde P^2$ has simple normal crossings with $\Y_0^\red.$
\end{itemize}
Let $r = r_1 \circ r_2 : \widetilde \Y \to \widetilde \Y^1$. By construction, the base change to the generic fiber
\[
r' : \widetilde \Y \times_{\spec(R)} \spec K \to \widetilde \Y^1 \times_{\spec(R)} \spec K \simeq \Y
\]
is an isomorphism. Thus, we identify $\Y$ with the corresponding open subscheme of $\widetilde \Y.$ The $R$ point $\widetilde P$ of $\widetilde \Y$ obtained by taking the closure of $P$ is the same as the strict transform under $r_2$ of $\tilde P^2.$

It remains to show that $\Y_0$ is smooth at $P_0.$ Since $\widetilde P$ has simple normal crossings with $\Y_0^\red,$ we can choose a regular system of parameters $y_1,\ldots,y_m \in \O_{\widetilde \Y,P_0}$ such that the ideal $\I_{\widetilde P, P_0} \subset \O_{\widetilde \Y, P_0}$ is generated by $y_2,\ldots,y_m$ and the ideal $\I_{\Y_0^\red,P_0} \subset \O_{\widetilde \Y,P_0}$ is generated by a monomial in $y_1,\ldots,y_m.$ Since $\Y_0^\red$ does not contain $\widetilde P,$ it follows that the monomial is $y_1.$ The local ring of $\spec(R)$ at its closed point is $R = k[[t]],$ so we have a map $r^\# : R \to \O_{\widetilde \Y,P_0}.$ Since $r^\#t \in \I_{\Y_0^\red,P_0}, $ we have $r^\#t = u y_1^k$ for some $u \in \O_{\widetilde \Y,P_0}.$  Similarly, the local ring $\O_{\widetilde P,P_0}$ is isomorphic to $R.$ Let $i: \widetilde P \to \Y_0,$ denote the inclusion, so we have a map $i^\#: \O_{\widetilde \Y, P_0} \to R.$ Since $P_0 \in Y_0,$ it follows that $i^\# y_1$ belongs to the maximal ideal of $R$ and thus $i^\# y_1 = v t^l$ for some $v \in R.$ On the other hand, since $\widetilde P$ is an $R$ point, we have $(i^\# u) v^k t^{kl} = i^\# r^\# t = t.$ So, $k = l = 1$ and $u$ is a unit. Finally, $r^\# t = uy_1 \in \I_{\Y_0,P_0},$ so $\I_{\Y_0,P_0}$ is the ideal generated by $y_1$ and $P_0$ is a regular point of $\Y_0$.
\end{proof}

\begin{lm}\label{lm:ek}
Let $X$ be a scheme over a field $k$, let $P$ be a regular point of $X$ and let $\O_P$ denote its structure sheaf. Then the differential graded endomorphism algebra $End_X(\O_P)$ is quasi-isomorphic to the exterior algebra on the tangent space $\Lambda(T_P X).$
\end{lm}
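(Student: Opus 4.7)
The plan is to pass to the local picture at $P$ and use a Koszul resolution of the residue field of the regular local ring $\O_{X,P}$ to construct an explicit DGA quasi-isomorphism onto $\Lambda(T_P X).$ Since $End_X(\O_P)$ depends only on an affine neighborhood of $P,$ I would first replace $X$ by $\spec(A),$ where $A = \O_{X,P}$ is a regular local ring with maximal ideal $\m_P$ and residue field $\k = A/\m_P.$ In this local setting $T_P X = (\m_P/\m_P^2)^*,$ and the derived endomorphism DGA of $\O_P$ is computed by $End_A(K^\bullet)$ for any $A$-projective resolution $K^\bullet$ of $\k.$

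For the resolution I would take the Koszul complex on a regular system of parameters. Choose generators $y_1,\ldots,y_n$ of $\m_P,$ let $V$ be the $\k$-vector space with basis $e_1,\ldots,e_n$ placed in cohomological degree $-1,$ and set $K^\bullet = A \otimes_\k \Lambda V$ with the $A$-linear derivation determined by $d e_i = y_i.$ Regularity of $A$ ensures that $(y_i)$ is a regular sequence, so $K^\bullet$ is a free resolution of $\k.$ The dual basis identifies $V^*$ with $T_P X$ and places its generators in degree $+1,$ matching the cohomological grading on $End_A(K^\bullet).$

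The heart of the argument is the candidate morphism
\[
\Phi : \Lambda(T_P X) \lrarr End_A(K^\bullet)
\]
sending a linear functional $v^* \in V^*$ to the $A$-linear interior product $\iota_{v^*}$ on $K^\bullet,$ extended multiplicatively on the exterior algebra. Two verifications are needed. First, each $\iota_{v^*}$ is a cycle in $End_A(K^\bullet)$: this follows from $d_K = \sum_j y_j \iota_{e_j^*}$ together with the anti-commutation of contractions on $\Lambda V.$ Second, $\Phi$ is multiplicative: again by anti-commutation of contractions, which matches the defining relations of the exterior algebra on the odd-degree generators $v^*.$ Thus $\Phi$ is a morphism of DGAs from $\Lambda(T_P X),$ carrying the zero differential, to $End_A(K^\bullet).$

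Finally, to see $\Phi$ is a quasi-isomorphism I would factor it through the chain map $End_A(K^\bullet) \to Hom_A(K^\bullet,\k)$ given by postcomposition with the augmentation $\epsilon : K^\bullet \to \k.$ This is a quasi-isomorphism because $K^\bullet$ is a bounded-above complex of projectives and $\epsilon$ itself is one. The target simplifies to $Hom_\k(\Lambda V, \k) \simeq \Lambda V^*$ with zero differential, since the image of $d_K$ lies in $\m_P \cdot K^\bullet$ and is killed by $\epsilon.$ A short computation then shows the induced composition $\Lambda V^* \to \Lambda V^*$ is the canonical identification furnished by the perfect pairing between $\Lambda V^*$ and $\Lambda V,$ up to signs intrinsic to the convention. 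The main obstacle I anticipate is bookkeeping: tracking the signs that arise from placing $V$ in degree $-1$ versus placing the generators of $\Lambda(T_P X)$ in degree $+1,$ and confirming that the two algebra structures agree on the nose rather than being twisted by a sign.
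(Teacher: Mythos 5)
Your proposal is correct and follows essentially the same route as the paper: reduce to an affine/local picture near $P$, take the Koszul resolution of the residue field on a regular system of parameters, send $\Lambda(T_PX)$ into $End_A(K^\bullet)$ by interior contraction, and compare with $Hom_A(K^\bullet, \k) \simeq \Lambda V^*$ via the augmentation. The only cosmetic difference is the initial reduction — you pass directly to $\spec(\O_{X,P})$ while the paper stays on an affine open and localizes $A$ only enough to make the system of parameters a global regular sequence, justifying the reduction by the adjunction $i^*\dashv i_*$; also, your worry about residual signs in the final identification is unfounded, since contracting a single covector and then augmenting picks out exactly the degree-one component, so the composite $\Lambda V^*\to\Lambda V^*$ is the identity on the nose.
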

\begin{proof}
Let $i : U \to X$ be an open embedding with $i(Q) = P.$ Since $i^*$ is left adjoint to $i_*$ and $i^*i_* \O_Q \simeq \O_Q,$ we have
\[
Hom_X(\O_P,\O_P) \simeq Hom_X(i_* \O_Q, i_* \O_Q) \simeq Hom_U(i^*i_* \O_Q, \O_Q) = Hom_U(\O_Q,\O_Q).
\]
So, we are free to replace $X$ with $U$ for the purposes of calculating $End(\O_P).$ In particular, we may assume that $X = \spec(A)$ with $A$ a $k$-algebra.

Let $\p \subset A$ denote the maximal ideal of $A$ corresponding to $P$. We have
\[
End_X(\O_P) \simeq End_A(A/\p).
\]
Let $x_1,\ldots,x_m \in \p,$ be a regular system of parameters for the regular local ring $A_\ip.$ It follows that $x_1,\ldots,x_m,$ is a regular sequence in $A_\p.$ After possibly replacing $X$ with an affine open subset and $A$ with a localization, we may assume that $x_1,\ldots,x_m,$ is a regular sequence for $A$ as well. Thus, we compute $End_A(A/\p)$ using a Koszul resolution for $A/\p$. The computation follows closely~\cite[Section 2]{CaM19}, to which we refer the reader for additional details.

In the following, $Hom$ is taken in the category of differential graded modules. We implicitly consider a graded module for which a differential is not specified as a differential graded module with vanishing differential. Let $E$ denote the free graded $A$-module concentrated in degree $-1,$ generated by $e_1,\ldots,e_m.$ Let $f : E \to A$ be the $A$-module homomorphism such that $f(e_i) = x_i,$ for $i = 1,\ldots,m.$ Write $E^\vee = Hom_A(E,A)$. Consider the contraction map,
\[
\Lambda E^\vee \times \Lambda E \overset{\contr}{\lrarr} \Lambda E.
\]
For $\psi \in \Lambda E^\vee,$ let
\[
\iota_\psi : \Lambda E \lrarr \Lambda E
\]
denote the $A$ module homomorphism given by $\iota_\psi(\omega)~=~\psi \contr \omega.$ Let
$
\iota : \Lambda E^\vee \to End_A(\Lambda E)
$
be given by $\psi \mapsto \iota_\psi.$ Then $\iota$ is a homomorphism of graded algebras~\cite[Lemma 2.1]{CaM19}. The Koszul complex $\K$ for the regular sequence $x_1,\ldots,x_m,$ is given by the graded module $\Lambda E$ equipped the with the differential~$\iota_f.$ Since $\iota$ is an algebra homomorphism and the differential on $\K$ is given by $\iota_f$, it follows that $\iota$ is in fact a homomorphism of differential graded algebras
\[
\iota: \Lambda E^\vee \to End_A(\K).
\]
Let $p : \Lambda E \to \Lambda^0 E \simeq A$ denote the projection, and let $q : A \to A/\p$ denote the quotient map. We obtain a map
\[
qp:  Hom_A(\K,\K) \to Hom_A(\K,A/\p).
\]
Since $f(E) \subset \p,$ the differential on $Hom_A(\K,A/\p)$ is trivial, and we have
\[
 Hom_A(\K,A/\p) = Hom_A(\Lambda E,A/p).
\]
Finally, we have an isomorphism
\[
r : Hom_A(\Lambda E,A/p) \overset{\sim}{\lrarr} Hom_A(\Lambda (E/\p E), A/\p)
\]
since every $A$ module homomorphism $\Lambda E \to A/\p$ factors through $\Lambda (E/\p E).$

On the other hand, let $V$ denote the graded $k$ vector space concentrated in degree $-1$ generated by $e_1,\ldots,e_m.$ Let $V^\vee$ denote the dual vector space. Since $A$ is $k$ algebra, we have a canonical inclusion $V \hookrightarrow E,$ which induces an isomorphism $V \simeq E/\p,$ which in turn gives rise to an isomorphism
\[
s: Hom_A(\Lambda(E/\p E), A/\p) \overset{\sim}{\lrarr} Hom_A(\Lambda V,k) \simeq \Lambda V^\vee.
\]
Let $j : V^\vee \to E^\vee$ denote the map given by extension of scalars, and let $\Lambda j : \Lambda V^\vee \to \Lambda E^\vee$ denote the induced map on exterior algebras.
The composition
\[
srqp\iota\Lambda j : \Lambda V^\vee \to \Lambda V^\vee
\]
is the identity. Moreover, since $\K$ is free, the map $qp : Hom_A(\K,\K) \to Hom_A(\K,A/\p)$ induces an isomorphism on cohomology. Thus, $srqp: Hom_A(\K,\K) \to \Lambda V^\vee$ induces isomorphism on cohomology. It follows that the map of differential graded algebras
\[
\iota \Lambda j : \Lambda V^\vee \to Hom_A(\K,\K)
\]
induces an isomorphism on cohomology. Since $f$ induces an isomorphism $V \simeq \p/\p^2 \simeq T^*_PX$, we have $V^\vee \simeq T_PX.$
\end{proof}

\begin{proof}[Proof of Proposition~\ref{pr:ag}]
The first assertion is Lemma~\ref{lm:rs}. The second assertion follows from Lemma~\ref{lm:ek} combined with the isomorphism $T_P\X^\vee \simeq T_{P_0}\X_0^\vee \otimes K$.
\end{proof}

\section{Extending \texorpdfstring{$A_\infty$}{A-infinity} structures modulo \texorpdfstring{$T^E$}{E}}\label{sec:extmodE}
In this appendix, following up on Remarks~\ref{rem:modE} and~\ref{rem:phimodE}, we show how the proof of Theorem~\ref{tm:sign} carries through the obstruction theory argument used by Fukaya~\cite{Fu09a} to extend $A_\infty$ structures modulo $T^E$ to full $A_\infty$ structures. The appendix is based on straightforward modifications of ideas in~\cite{Fu09a,FO09,FO17a}

\subsection{\texorpdfstring{$A_\infty$}{A-infinity} structures modulo \texorpdfstring{$T^E$}{E}}
Let $E > 0$ be arbitrary. The virtual fundamental class technique of Fukaya~\cite{Fu09a} allows one to choose perturbation data $\s$ for the $J$-holomorphic map equations defining the moduli spaces $\M_{k+1}(L,\de)$ for $\omega(\de) < E.$ One imposes the bound $\omega(\de) < E$ because otherwise the condition that the perturbation data $\s$ respect the inductive structure of the boundary that is used in proving the $A_\infty$ relations could force the perturbations to grow too large. As mentioned in Section~\ref{ssec:vfc}, the perturbation data consist of continuous families of perturbations, over which one averages in the definition of the pushforward of differential forms. As mentioned in Section~\ref{ssec:invdfc}, the perturbation data are chosen to be compatible with formula~\eqref{eq:tp}, so $\tilde \phi$ is a map of the perturbed moduli spaces. Thus, one obtains operations $\m_{k,\de}^{L,\s},\m_{k,\beta}^{L,\s}$ for $\omega(\de),\beta < E$ as in Section~\ref{ssec:ainfL}. One defines operations
\[
\m_k^{L,\s} : C(L)^{\otimes k} \to C(L)
\]
by
\[
\m_k^{L,\s}:=\sum_{\substack{\beta\in G_L\\ \beta < E}}T^{\beta}\m_{k,\beta}^L \otimes \Id_\Nov.
\]
Moreover, the following truncated versions of Proposition~\ref{cl:a_infty_m} and Theorem~\ref{tm:sign} hold.
\begin{pr}\label{pr:aimodE}
The operations $\{\m_k^{L,\s}\}_{k \geq 0}$ give $C(L)$ the structure of an $A_\infty$ algebra modulo~$T^E$.
\end{pr}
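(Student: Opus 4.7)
The plan is to follow the standard strategy for proving the $A_\infty$ relations of a Fukaya algebra, due to Fukaya, but restricted to the truncated range $\beta < E$. The key geometric input is the analysis of the codimension one boundary of the moduli spaces $\M_{k+1}(L,\de)$. Under the virtual fundamental chain perturbations of $\s$, this boundary decomposes into strata parametrizing pairs of stable maps glued at a boundary node, i.e.\ strata of the form $\M_{k_1+1}(L,\de_1) {}_{ev_i} \!\times_{ev_0} \M_{k_2+1}(L,\de_2)$ with $k_1 + k_2 = k + 1$ and $\de_1 + \de_2 = \de$. Crucially, provided $\omega(\de) < E$, each factor also satisfies $\omega(\de_j) < E$, so the perturbation $\s$ has already been chosen on each factor and is compatible with the product structure on the stratum. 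This is precisely the reason to truncate at $T^E$.

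First, I would apply Stokes' theorem to the pushforward $(ev_0^\de)_*(\bigwedge_j (ev_j^\de)^*\alpha_j)$ using the properties of pushforward recorded in Proposition~\ref{pr:print} together with the signed Stokes identity $d \circ (ev_0^\de)_* \pm (ev_0^\de)_* \circ d = (ev_0^\de|_\partial)_*$, taking into account that $ev_0^\de$ is made a submersion via averaging over the continuous family of perturbations. The interior contribution, together with the term coming from differentiating the forms $(ev_j^\de)^*\alpha_j$ by the Leibniz rule, produces $\m^{L,\s}_{1,0} \circ \m^{L,\s}_{k,\beta} = d \circ \m^{L,\s}_{k,\beta}$ and the terms $\m^{L,\s}_{k,\beta} \circ (1 \otimes \cdots \otimes d \otimes \cdots \otimes 1)$; the boundary contribution, after identifying the fiber product with its pushforward along the appropriate evaluation map, yields precisely the bilinear terms $\m^{L,\s}_{k_1,\beta_1}(\ldots, \m^{L,\s}_{k_2,\beta_2}(\ldots), \ldots)$ with $(k_1,\beta_1),(k_2,\beta_2)$ subject to $k_1 + k_2 = k+1$, $\beta_1+\beta_2 = \beta < E$.

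Next I would check the signs. This is a standard but tedious calculation: the prefactor $(-1)^{\varepsilon(\alpha)}$ in the definition of $\m^{L,\s}_{k,\de}$ is designed so that the signs coming from Stokes' theorem, the Koszul signs from interchanging $d$ with the wedge of pulled-back forms, and the orientation of the codimension-one boundary stratum given by Section~\ref{ssec:vfc} together produce exactly the signs in the $A_\infty$ relation~\eqref{eq:Ggainf}. I would refer to~\cite[Proposition 2.7]{ST16a} for the explicit sign verification, as the argument is identical there. Finally, summing over $\de$ with $\omega(\de) = \beta$ for all $\beta < E$ and weighting by $T^\beta$ assembles these relations into the $A_\infty$ relations modulo $T^E$ for $\{\m_k^{L,\s}\}$.

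The main obstacle, and the reason the full $A_\infty$ relations cannot be established this way in one step, is the requirement that the perturbation $\s$ be chosen consistently with the boundary decomposition of $\M_{k+1}(L,\de)$ into products of smaller moduli spaces. Truncating at $T^E$ sidesteps this: the required compatibility is only a finite condition once $\omega(\de) < E$ is imposed, and Fukaya's construction arranges for it inductively on $\beta$. Since the statement only asks for $A_\infty$ relations modulo $T^E$, the subtle issue of extending to all $\beta$ (which requires the obstruction-theoretic argument outlined later in the appendix) is not needed here, and the proof amounts to Stokes' theorem plus signs.
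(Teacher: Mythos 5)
Your proposal matches the approach the paper implicitly endorses: the paper gives no explicit proof of this proposition, presenting it as the truncated analogue of Proposition~\ref{cl:a_infty_m}, which is in turn established by the Stokes-theorem argument of \cite{Fu09a} and \cite[Proposition 2.7]{ST16a}. Your sketch correctly fills in that argument, including the key observation that $\omega(\de_j) \le \omega(\de) < E$ for each factor in the codimension-one boundary strata, which is exactly why the truncation makes the inductive choice of perturbation data $\s$ consistent on the boundary and sidesteps the issue the paper flags about perturbations growing too large.
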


\begin{tm}\label{tm:signmodE}
Assume the Maslov class of $L$ vanishes modulo~$4.$ Then, the pull-back of differential forms $\phi^*: C(\phi(L)) \to C(L)$ is a strict $A_\infty$ isomorphism modulo $T^E$ from $(C(\phi(L)),\m^{\phi(L),\s})$ to $(C,(\m^{L,\s})^{op}).$ That is,
\[
\phi^*\m_k^{\phi(L),\s}(\alpha_1,\ldots,\alpha_k)=
(-1)^{\spe{\tau}{\alpha} + k + 1}\m_k^{L,\s}(\phi^*\alpha_k,\ldots,\phi^*\alpha_1).
\]
\end{tm}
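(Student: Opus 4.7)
The plan is to transcribe the proof of Theorem~\ref{tm:sign} into the truncated setting, with the operations $\m^L,\m^{\phi(L)}$ replaced throughout by their perturbation-data versions $\m^{L,\s},\m^{\phi(L),\s}$. The only genuinely new input needed is that the perturbation data $\s$ governing the moduli spaces $\M_{k+1}(L,\de)$ and $\M_{k+1}(\phi(L),-\phi_*\de)$ for $\omega(\de)<E$ can be chosen $\phi$-equivariantly, so that formula~\eqref{eq:tp} still defines an admissible map $\tilde\phi$ between the perturbed moduli spaces and the evaluation identities~\eqref{eq:phev} persist. This is precisely the feature of the virtual fundamental class construction highlighted in Section~\ref{ssec:invdfc}, now applied only for $\omega(\de)<E$.

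Once such $\s$ is in hand, the two main steps of the original proof go through without modification. First, the differential form calculation behind Proposition~\ref{prop:q_sgn_c} is purely local: it uses only the evaluation identities, the functorial properties of push-forward in Proposition~\ref{pr:print}, Lemma~\ref{lm:pp}, and elementary sign manipulations, none of which reference $\m^L$ as opposed to $\m^{L,\s}$. So the identity
\[
\phi^*\m_{k,-\phi_*\de}^{\phi(L),\s}(\alpha_1,\ldots,\alpha_k)= (-1)^{sgn(\tilde\phi)+sgn(\phi|_L)+\spe{\tau}{\alpha}+\frac{k(k-1)}{2}}\m_{k,\de}^{L,\s}(\phi^*\alpha_k,\ldots,\phi^*\alpha_1)
\]
holds for every $\de$ with $\omega(\de)<E$. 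Second, the sign computation for $\tilde\phi$ in Proposition~\ref{prop:sgn_phit} is a statement about the canonical orientation of the Cauchy-Riemann $Pin$ boundary value problem associated to a single stable map; it is insensitive to the truncation parameter $E$ and applies verbatim. Combining these two steps reproduces the truncated analog of Corollary~\ref{cy:sign}, and the hypothesis that the Maslov class vanishes modulo $4$ together with $\omega(-\phi_*\de)=\omega(\de)$ then collapses the sign to $(-1)^{k+1+\spe{\tau}{\alpha}}$. Summing over $\de$ with $\omega(\de)=\beta<E$, tensoring with $T^\beta\otimes \Id_\Nov$, and assembling over $\beta<E$ yields the claim modulo $T^E$.

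The main obstacle is step one: the perturbation data $\s$ must simultaneously respect the inductive boundary structure used in the proof of the truncated $A_\infty$ relations of Proposition~\ref{pr:aimodE} and satisfy condition~\eqref{eq:BE} for $\tilde\phi$. Both constraints are stable under averaging over $\phi$-invariant families of perturbations, so they are compatible, but one must verify that the inductive construction of $\s$ (over $\beta<E$, with data on moduli spaces of symplectic area less than $\beta$ already fixed) can be carried out while preserving $\phi$-equivariance at each stage. This is essentially a parametrized version of the construction already invoked in Section~\ref{ssec:invdfc} and causes no genuine new difficulty; the remainder of the proof is then a direct transcription of the proof of Theorem~\ref{tm:sign}.
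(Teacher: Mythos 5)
Your proposal is correct and follows essentially the same route the paper takes: the paper itself gives no separate proof of Theorem~\ref{tm:signmodE}, but in the preceding paragraph it notes exactly the point you isolate — that the perturbation data $\s$ for $\omega(\de)<E$ are chosen compatibly with the formula defining $\tilde\phi$, so that $\tilde\phi$ maps perturbed moduli spaces to perturbed moduli spaces — and then simply asserts that the truncated versions of Proposition~\ref{cl:a_infty_m} and Theorem~\ref{tm:sign} hold. Your additional remarks, that Proposition~\ref{prop:q_sgn_c} is a local push-forward/sign computation untouched by the truncation and that Proposition~\ref{prop:sgn_phit} is a statement about a single Cauchy--Riemann $Pin$ boundary value problem independent of $E$, accurately explain why the transcription goes through without modification.
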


\subsection{Pseudoisotopies}
Abbreviate $I = [0,1].$ Set $\overline \mC(L):=A^*(I\times L)$ and $\mC(L)= \overline \mC(L) \otimes \Nov.$ Let $E' \geq E > 0$ and let $\s$ (resp. $\s'$) denote a collection of perturbation data for the $J$-holomorphic map equations defining the modulo spaces $\M_{k+1}(L,\de)$ for $\omega(\de) < E$ (resp. $\omega(\de) < E'$).
We construct an $A_\infty$ structure modulo $T^E$ on $\mC(L)$ that interpolates between the structures $\m_k^{L,\s}$ and $\m_k^{L,\s'}$ on $C(L)$ as follows.

For $\de \in H_2(X,L), k \geq 0,$ set
\[
\Mt_{k+1}(L,\de):= I \times \M_{k+1}(L,\de).
\]
The moduli space $\Mt_{k+1}(L,\de)$ comes with evaluation maps
\begin{gather*}
\evt_j^\de:\Mt_{k+1}(L,\de)\lrarr I\times L, \qquad j = 0,\ldots,k,\\
\evt_j^\de(t,(\Sigma,u,\vec{z})):=(t,u(z_j)).
\end{gather*}
Let $\st$ be a collection of perturbation data for the moduli spaces $\Mt_{k+1}(L,\de)$ for $\omega(\de) < E$ that extends $\s$ on $\{0\} \times \M_{k+1}(L,\de)$ and $\s'$ on $\{1\} \times \M_{k+1}(L,\de)$ as in Lemma~11.4 of~\cite{Fu09a}. For all $\de\in H_2(X,L)$ with $\omega(\de) < E$, and $k\ge 0$,  $(k,\de) \neq (1,0)$, define
\[
\mt_{k,\de}^{L,\st}:\overline\mC(L)^{\otimes k}\lrarr \overline\mC(L)
\]
by
\[
\mt_{k,\de}^{L,\st}(\otimes_{j=1}^k\at_j):= (-1)^{\varepsilon(\at)}(\evt_0)_*(\bigwedge_{j=1}^k\evt_j^*\at_j).
\]
Define also
\[
\mt_{1,0}^{L,\st}(\at)=d\at.
\]
For $\beta \in G_L, \beta < E,$ and $k \geq 0,$ define
\[
\mt_{k,\beta}^{L,\st} : \overline \mC(L)^{\otimes k} \lrarr \overline \mC(L)
\]
by
\begin{equation*}
\mt_{k,\beta}^{L,\st} = \sum_{\substack{\de \in H_2(X,L) \\ \omega(\de) = \beta}} \mt_{k,\de}.
\end{equation*}
Let
\[
\mt_{k}^{L,\st}:\mC(L)^{\otimes k}\lrarr \mC(L),
\]
be given by
\begin{equation*}
\mt_{k}^{L,\st}(\otimes_{j=1}^k\at_j):=
\sum_{\substack{\beta\in G_L,\\ \beta < E}}
T^{\beta}\mt_{k,\beta}(\otimes_{j=1}^k\at_j).
\end{equation*}
For $t \in I,$ let $j_t : L \to I \times L$ be given by $j_t(p) = (t,p)$ and let $\pi_L : I \times L \to L$ denote the projection. Let $\d_t$ denote the coordinate vector field on $I.$ Define
\[
\m_k^{L,\st,t}, \co_k^{L,\st,t} : C(L)^{\otimes k} \to C(L)
\]
by
\begin{gather*}
\m_k^{L,\st,t}(\alpha_1,\ldots,\alpha_k) = j_t^*\mt_k^{L,\st}(\pi_L^*\alpha_1,\ldots,\pi_L^*\alpha_k), \\
\co_k^{L,\st,t}(\alpha_1,\ldots,\alpha_k) = j_t^* i_{\d_t} \mt_k^{L,\st}(\pi_L^*\alpha_1,\ldots,\pi_L^*\alpha_k).
\end{gather*}
The operations $\m_k^{L,\st,t},\co_k^{L,\st,t},$ inherit the $G_L$ gapped structure of the operations $\mt_k^{L,\st}.$

Let $\pi_I : I \times L \to I$ denote the projection and let $\mR = A^*(I)\otimes \Nov.$ The pull back of differential forms $\pi_I^*$ makes $\mC(L)$ into a $\mR$ module.
\begin{pr}\label{pr:pimodE}
The operations $\{\mt_k^{L,\st}\}_{k \geq 0}$ give $\mC(L)$ the structure of an $A_\infty$ algebra modulo $T^E.$ Moreover, they are $\mR$ multilinear in the sense that for $f \in \mR,$ we have
\begin{equation*}
\mt_k^{L,\st}(\at_1,\ldots,f\cdot\at_i,\ldots,\at_k) = (-1)^{|f|(1 + \sum_{j = 1}^{i-1}(|\alpha_j| +1))}\mt_k^{L,\st}(\at_1,\ldots,\at_k) + \delta_{1,k} df \cdot \at_1.
\end{equation*}
Finally,
\[
\m_k^{L,\st,0} = \m_k^{L,\s}, \qquad \m_k^{L,\st,1} = \m_k^{L,\s'}.
\]
\end{pr}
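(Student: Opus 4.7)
I would prove Proposition~\ref{pr:pimodE} by importing the proof of Proposition~\ref{pr:aimodE} wholesale, replacing $\M_{k+1}(L,\de)$ by $\Mt_{k+1}(L,\de) = I \times \M_{k+1}(L,\de)$, the evaluation maps $ev_j^\de$ by $\evt_j^\de$, and the perturbation data $\s$ by $\st$. The codimension-one boundary of $\Mt_{k+1}(L,\de)$ decomposes into an interior part $I \times \d\M_{k+1}(L,\de)$ (coming from disk bubbling) and an endpoint part $\d I \times \M_{k+1}(L,\de)$. Since $\st$ is chosen in accordance with Lemma~11.4 of~\cite{Fu09a} to match the natural fiber-product structure on the interior boundary, the standard Stokes/bubbling analysis applied to $(\evt_0^\de)_*\bigl(\bigwedge_{j=1}^k (\evt_j^\de)^*\at_j\bigr)$ produces the full $A_\infty$ relation $\sum \pm\, \mt_{k_1}^{L,\st} \circ_i \mt_{k_2}^{L,\st} = 0$ modulo $T^E$. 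The endpoint boundary $\d I \times \M_{k+1}(L,\de)$ is carried by $\evt_0^\de$ into $\d(I \times L)$ rather than a fiber boundary, so it does not contribute to the $A_\infty$ relation; instead it will be responsible for the endpoint identification. Setting $\mt_{1,0}^{L,\st}(\at) = d\at$ with $d$ the full exterior derivative on $I \times L$ automatically incorporates the $dt$-direction in Stokes.

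\textbf{$\mR$-multilinearity.} Every evaluation map $\evt_j^\de : \Mt_{k+1}(L,\de) \to I \times L$ commutes with projection to $I$, so for $f \in A^*(I)$ the form $\tilde f := (\evt_j^\de)^*\pi_I^*f$ is independent of $j$ and is in fact pulled back from $I$ along $\Mt_{k+1}(L,\de) \to I$. Substituting $\at_i \mapsto f\cdot\at_i = \pi_I^*f \wedge \at_i$ in the defining formula of $\mt_{k,\de}^{L,\st}$, moving $\tilde f$ past the forms $(\evt_1^\de)^*\at_1, \ldots, (\evt_{i-1}^\de)^*\at_{i-1}$ picks up the Koszul sign $(-1)^{|f|\sum_{j=1}^{i-1}|\at_j|}$; combined with the sign $(-1)^{i|f|}$ arising from the shift in the factor $\varepsilon(\at)$, this produces exactly the exponent $|f|\bigl(1 + \sum_{j=1}^{i-1}(|\at_j|+1)\bigr)$ claimed in the statement. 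The projection formula, Proposition~\ref{pr:print}\ref{prop:pushpull}, applied to $\evt_0^\de$ then pulls $\pi_I^*f$ outside the pushforward. For $k \neq 1$ every contribution to $\mt_k^{L,\st}$ is of this form and no correction appears. For $k = 1$, the positive-$\beta$ terms are again multilinear by the same argument, while the $\beta = 0$ piece $d(f\at_1) = df \wedge \at_1 + (-1)^{|f|} f\cdot d\at_1$ contributes the extra $df \cdot \at_1$.

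\textbf{Endpoint compatibility and main obstacle.} By the construction of $\st$, its restriction to $\{t\}\times\M_{k+1}(L,\de)$ equals $\s$ for $t = 0$ and $\s'$ for $t = 1$. Identifying $\M_{k+1}(L,\de)$ with $\{t\}\times\M_{k+1}(L,\de) \subset \Mt_{k+1}(L,\de)$, one has $\evt_j^\de = j_t \circ ev_j^\de$ on the slice, and since $\pi_L \circ j_t = \Id_L$ we get $j_t^*\pi_L^*\alpha = \alpha$. Combined with the compatibility of pullback along $j_t$ with the form-valued pushforward along $\evt_0^\de$ (via Proposition~\ref{pr:print}\ref{prop:pushfiberprod}), this yields $\m_k^{L,\st,0} = \m_k^{L,\s}$ and $\m_k^{L,\st,1} = \m_k^{L,\s'}$. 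The principal technical obstacle throughout is the coherent choice of $\st$: it must simultaneously restrict to $\s$ and $\s'$ at the two endpoints and respect the fiber-product structure on the interior boundary within the averaging formalism required for $(\evt_0^\de)_*$ to be well-defined as a differential form. This is precisely what Lemma~11.4 of~\cite{Fu09a} provides, suitably adapted to the pseudoisotopy setting; given it, the remainder of the argument reduces to Koszul-sign bookkeeping and a direct transfer of the proof of Proposition~\ref{pr:aimodE}.
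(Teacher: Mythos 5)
Your proposal is correct and follows the same route the paper takes: the paper simply defers to Fukaya's Theorem 7.1 in \cite{Fu09a} and the exposition in Propositions 4.5--4.7 of \cite{ST16a}, and your write-up is exactly the ``straightforward modification'' those references carry out --- the Stokes argument with vertical boundary $I \times \d\M_{k+1}(L,\de)$ giving the $A_\infty$ relations while the horizontal boundary $\d I \times \M_{k+1}(L,\de)$ drops out, the projection formula and Koszul bookkeeping for $\mR$-multilinearity (your exponent $|f|(i + \sum_{j<i}|\at_j|)$ agrees with $|f|(1 + \sum_{j=1}^{i-1}(|\at_j|+1))$), and the base-change diagram along $j_t$ with $\pi_L \circ j_t = \Id_L$ plus the endpoint compatibility built into $\st$ for the last assertion.
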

The proof of Proposition~\ref{pr:pimodE} is a straightforward modification of the proof of~\cite[Theorem~7.1]{Fu09a}. An exposition is given in Propositions~4.5,~4.6 and~4.7 of~\cite{ST16a}. The following is due to~\cite[Theorem 11.1]{Fu09a}.
\begin{cy}\label{cy:pi} For each $t \in I,$ the operations $\m_k^{L,\st,t}$ define an $A_\infty$ structure modulo $T^E$ on~$C(L).$ Furthermore, modulo $T^E,$ we have
\begin{multline}\label{eq:dtpi}
\frac{d}{dt} \m_k^{L,\st,t}(\alpha_1,\ldots,\alpha_k) = \\
 = \sum_{k_1 + k_2 = k+1} (-1)^{\sum_{j=1}^{i-1}(|\alpha_j|+1)} \m_{k_1}^{L,\st,t}(\alpha_1,\ldots,\alpha_{i-1},\co_{k_2}^{L,\st,t}(\alpha_i,\ldots,\alpha_{i+k_2-1}),\alpha_{i+k_2},\ldots,\alpha_k) - \\
- \sum_{k_1 + k_2 = k+1} (-1)^{\sum_{j=1}^{i-1}(|\alpha_j|+1)} \co_{k_1}^{L,\st,t}(\alpha_1,\ldots,\alpha_{i-1},\m_{k_2}^{L,\st,t}(\alpha_i,\ldots,\alpha_{i+k_2-1}),\alpha_{i+k_2},\ldots,\alpha_k).
\end{multline}
\end{cy}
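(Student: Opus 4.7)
The plan is to derive both assertions from the $A_\infty$ relation for $\mt^{L,\st}$ modulo $T^E$ (Proposition~\ref{pr:pimodE}), evaluated on the horizontal list $(\pi_L^*\alpha_1,\ldots,\pi_L^*\alpha_k)$, by applying $j_t^*$ for the first assertion and $j_t^* i_{\d_t}$ for the second.

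The engine is the unique decomposition $\omega = \omega^{(0)} + dt \wedge \omega^{(1)}$ of any form on $I \times L$, with $i_{\d_t}\omega^{(0)} = i_{\d_t}\omega^{(1)} = 0$; under this decomposition one has $j_t^*\omega = \omega^{(0)}|_{\{t\}\times L}$ and $j_t^* i_{\d_t}\omega = \omega^{(1)}|_{\{t\}\times L}$. Applied to the inner composition $B_{k_2,\beta_2,i} := \mt_{k_2,\beta_2}(\pi_L^*\alpha_i,\ldots,\pi_L^*\alpha_{i+k_2-1})$ appearing in the $A_\infty$ relation, this yields pieces whose restrictions at time $t$ are $\m_{k_2,\beta_2}^{L,\st,t}(\alpha_i,\ldots)$ and $\co_{k_2,\beta_2}^{L,\st,t}(\alpha_i,\ldots)$ respectively, by the very definitions above. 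I then use the $\mR$-multilinearity of $\mt^{L,\st}$ with $f = dt \in \mR$ (for which $df = 0$) to factor the $dt$-factor out of each outer composition:
\begin{multline*}
\mt_{k_1,\beta_1}(\pi_L^*\alpha_1,\ldots,B_{k_2,\beta_2,i},\ldots,\pi_L^*\alpha_k) = \\
= \mt_{k_1,\beta_1}(\pi_L^*\alpha_1,\ldots,B^{(0)}_{k_2,\beta_2,i},\ldots,\pi_L^*\alpha_k) + \varepsilon\, dt\wedge \mt_{k_1,\beta_1}(\pi_L^*\alpha_1,\ldots,B^{(1)}_{k_2,\beta_2,i},\ldots,\pi_L^*\alpha_k),
\end{multline*}
for a Koszul sign $\varepsilon$ dictated by the $\mR$-multilinearity formula. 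Expressing any horizontal form on $I \times L$ as a $C^\infty(I)$-linear combination of pullbacks $\pi_L^*\gamma$ and applying $\mR$-multilinearity once more identifies $j_t^*$ (respectively $j_t^* i_{\d_t}$) of each outer composition on horizontal inputs with $\m_{k_1,\beta_1}^{L,\st,t}$ (respectively $\co_{k_1,\beta_1}^{L,\st,t}$) applied to the $j_t^*$-restrictions of its inputs.

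Applying $j_t^*$ to the $A_\infty$ relation collapses the $dt$-containing terms and yields the $A_\infty$ relation for $\{\m_k^{L,\st,t}\}_k$ modulo $T^E$, proving the first assertion. For the derivative formula I apply $j_t^* i_{\d_t}$. The distinguished summand with $(k_1,\beta_1,k_2,\beta_2,i) = (1,0,k,\beta,1)$, whose outer operation is $\mt_{1,0} = d$ acting on $N := \mt_{k,\beta}(\pi_L^*\alpha_1,\ldots)$, will be computed directly using $d(dt\wedge\eta) = -dt\wedge d\eta$ to give
\[
j_t^* i_{\d_t}\, dN = \partial_t N^{(0)}(t) - d N^{(1)}(t) = \frac{d}{dt}\m_{k,\beta}^{L,\st,t}(\alpha_1,\ldots) - d\,\co_{k,\beta}^{L,\st,t}(\alpha_1,\ldots).
\]
All remaining summands contribute, via the $dt$-decomposition of their inner factor together with the identity $j_t^* i_{\d_t}(dt\wedge\eta) = j_t^*\eta$, one term of type $\co_{k_1,\beta_1}^{L,\st,t}(\ldots,\m_{k_2,\beta_2}^{L,\st,t}(\ldots),\ldots)$ and one of type $\m_{k_1,\beta_1}^{L,\st,t}(\ldots,\co_{k_2,\beta_2}^{L,\st,t}(\ldots),\ldots)$. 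Solving for $\tfrac{d}{dt}\m_{k,\beta}^{L,\st,t}$ and recombining the $-d\,\co_{k,\beta}^{L,\st,t}$ term with the $(k_1,\beta_1) = (1,0)$ boundary case of the $\m\circ\co$ sum (using $\m_{1,0}^{L,\st,t} = d$) then produces the stated formula, which after summing over $\beta < E$ gives the second assertion modulo $T^E$.

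The main obstacle will be the Koszul sign bookkeeping: one must track the interaction of the sign $(-1)^{\sum_{j=1}^{i-1}(|\alpha_j|+1)}$ from the $A_\infty$ relation, the sign $\varepsilon$ extracted by $\mR$-multilinearity, and the Leibniz rule for $i_{\d_t}$, to verify that the two sums in the stated formula carry opposite overall signs as asserted. Boundary cases such as $(k_2,\beta_2) = (1,0)$ (where the inner operation is $d$) and $(k_1,\beta_1) = (1,\beta_1)$ with $\beta_1 > 0$ will require separate verification but are compatible with the formula; in particular $\co_{1,0}^{L,\st,t} = 0$, since $\mt_{1,0}(\pi_L^*\alpha) = \pi_L^*d\alpha$ is horizontal, so no ambiguity arises at the boundary of either sum.
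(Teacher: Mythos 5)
Your proposal takes essentially the same route as the paper: both derive the two assertions by applying $j_t^*$ and $j_t^* i_{\d_t}$ to the $A_\infty$ relation for $\mt^{L,\st}$ evaluated on $(\pi_L^*\alpha_1,\ldots,\pi_L^*\alpha_k)$, decomposing the inner factor and invoking $\mR$-multilinearity, with the $\frac{d}{dt}$ term produced by the differential $\mt_{1,0}=d$ and the $\d_t$-component. The one place your argument is weaker than the paper's is the claim that an arbitrary horizontal form on $I\times L$ is a $C^\infty(I)$-linear combination of pullbacks $\pi_L^*\gamma$ — this requires infinite sums or an approximation argument. The paper instead writes the inner factor $\at$ exactly as $\pi_L^*j_t^*\at + dt\wedge\pi_L^*j_t^*i_{\d_t}\at + f\at'$ with $f\in A^0(I)$ and $f(t)=0$ (a Hadamard-lemma decomposition at the fixed time $t$); the two pullback pieces give $\m^{L,\st,t}$ and $\co^{L,\st,t}$ directly, while the $f\at'$ error term dies under $j_t^*$ and, under $j_t^*i_{\d_t}$, survives only through the $\delta_{1,k}\,df\cdot\at'$ Leibniz correction in $\mR$-multilinearity, which is precisely what generates the $\frac{d}{dt}\m_{k,\beta}^{L,\st,t}$ term. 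Adopting this decomposition would close the technical gap while leaving the rest of your argument, including the treatment of the $(k_1,\beta_1)=(1,0)$ summand and the observation that $\co_{1,0}^{L,\st,t}=0$, unchanged.
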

\begin{proof}
Proposition~\ref{pr:pimodE} gives
\begin{equation}\label{eq:mtainf}
\sum_{\substack{k_1+k_2=k+1\\1\le i\le k_1}}(-1)^{\sum_{j=1}^{i-1}(|\alpha_j|+1)}
\mt_{k_1}(\pi_L^*\alpha_1,\ldots,\mt_{k_2}(\pi_L^*\alpha_i,\ldots), \pi_L^*\alpha_{i+k_2},\ldots,\pi_L^*\alpha_k)=0.
\end{equation}
Observe that for $\at \in A^*(I \times L)$ and $t \in I,$ there exist $\at' \in A^*(I\times L)$ and $f \in A^0(I)$ such that $f(t) = 0$ and $\at - \pi_L^*j_t^*\at - dt \wedge \pi_L^* j_t^* i_{\d_t} \at = f \at'.$ Apply this observation to $\at = \mt_{k_2}(\pi_L^*\alpha_i,\ldots).$ To show that the operations $\m_k^{L,s,t}$ define an $A_\infty$ structure, apply $j_t^*$ to equation~\eqref{eq:mtainf}, use the $\mR$ linearity of Proposition~\eqref{pr:pimodE} and the fact that $j_t^* f = 0 = j_t^* dt.$ Applying $j_t^* i_{\d_t}$ to equation~\eqref{eq:mtainf} and using a similar argument one obtains equation~\eqref{eq:dtpi}.
\end{proof}
Let $\hat\phi = \Id \times \phi : I\times X \to I\times X.$
\begin{pr}\label{pr:tsign}
Assume the Maslov class of $L$ vanishes modulo~$4.$ Then, the pull-back of differential forms $\hat \phi^*: \mC(\phi(L)) \to \mC(L)$ is a strict $A_\infty$ isomorphism modulo $T^E$ from $(\mC(\phi(L)),\mt^{\phi(L),\st})$ to $(\mC(L),(\mt^{L,\st})^{op}).$ That is,
\[
\hat\phi^*\mt_k^{\phi(L),\st}(\at_1,\ldots,\at_k)=
(-1)^{\spe{\tau}{\at} + k + 1}\mt_k^{L,\st}(\hat\phi^*\at_k,\ldots,\hat\phi^*\at_1).
\]
\end{pr}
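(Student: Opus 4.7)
The plan is to mimic the proof of Theorem~\ref{tm:sign} verbatim, replacing the moduli space $\M_{k+1}(L,\de)$ with the pseudoisotopy moduli space $\Mt_{k+1}(L,\de) = I\times\M_{k+1}(L,\de)$, the evaluations $ev_j^\de$ with $\evt_j^\de$, the involution $\phi$ with $\hat\phi = \Id_I\times\phi$, and the map $\tilde\phi$ with $\tilde\phi_I := \Id_I \times \tilde\phi : \Mt_{k+1}(L,\de)\to\Mt_{k+1}(\phi(L),-\phi_*\de)$. First I would observe that, just as in Section~\ref{ssec:invdfc}, the virtual fundamental class technique of~\cite{Fu09a} allows us to choose the perturbation data $\st$ in such a way that $\tilde\phi_I$ carries the perturbed moduli spaces to one another; this is possible because the $I$ factor is untouched by the involution and the perturbations on $\M_{k+1}$ can be taken $\tilde\phi$-compatible as in~\eqref{eq:tp}. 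The analog of~\eqref{eq:phev} holds:
\[
\hat\phi\circ\evt_j^\de = \evt_{k+1-j}^{-\phi_*\de}\circ\tilde\phi_I, \quad j=1,\ldots,k, \qquad \hat\phi\circ\evt_0^\de = \evt_0^{-\phi_*\de}\circ\tilde\phi_I.
\]

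Next I would run the differential form calculation of Proposition~\ref{prop:q_sgn_c}, which depends only on the push-pull properties of Proposition~\ref{pr:print} and the sign lemma~\ref{lm:pp}. These apply unchanged on $I\times L$, yielding
\[
\hat\phi^*\mt_{k,-\phi_*\de}^{\phi(L),\st}(\at_1,\ldots,\at_k) = (-1)^{sgn(\tilde\phi_I)+sgn(\hat\phi|_{I\times L})+\spe{\tau}{\at}+\frac{k(k-1)}{2}}\mt_{k,\de}^{L,\st}(\hat\phi^*\at_k,\ldots,\hat\phi^*\at_1)
\]
for $(k,\de)\ne(1,0)$. The case $(k,\de)=(1,0)$ reduces to $\hat\phi^* d\at = d\hat\phi^*\at$, which is immediate and has the correct sign $(-1)^{\spe{\tau}{\at}+k+1}=1$.

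Then I would compute the signs. Since $\tilde\phi_I = \Id_I\times \tilde\phi$ and $\hat\phi|_{I\times L} = \Id_I\times \phi|_L$, both signs agree with their non-extended analogs:
\[
sgn(\tilde\phi_I)=sgn(\tilde\phi), \qquad sgn(\hat\phi|_{I\times L}) = sgn(\phi|_L).
\]
So Proposition~\ref{prop:sgn_phit} applies verbatim and we obtain the truncated analog of Corollary~\ref{cy:sign}:
\[
\hat\phi^*\mt_{k,-\phi_*\de}^{\phi(L),\st}(\at_1,\ldots,\at_k) = (-1)^{\frac{\mu(\de)}{2}+k+1+\spe{\tau}{\at}}\mt_{k,\de}^{L,\st}(\hat\phi^*\at_k,\ldots,\hat\phi^*\at_1).
\]
Finally, using the hypothesis that the Maslov class vanishes modulo $4$, so $\mu(\de)/2$ is even, and the fact that $\omega(-\phi_*\de)=\omega(\de)$, summation over $\de$ with $\omega(\de)=\beta$ and then over $\beta<E$ gives the desired strict $A_\infty$ isomorphism modulo $T^E$.

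The main obstacle, which I would treat as a routine application of~\cite{Fu09a}, is verifying that the pseudoisotopy perturbation data $\st$ can indeed be arranged to be $\tilde\phi_I$-compatible while simultaneously extending the data $\s$ at $t=0$ and $\s'$ at $t=1$; however, since $\s$ and $\s'$ are themselves chosen $\tilde\phi$-compatibly in the construction underlying Theorem~\ref{tm:signmodE}, one can average the pseudoisotopy perturbation data over the $\Z/2\Z$ action generated by $\tilde\phi_I$ without altering the boundary values, which achieves the required compatibility.
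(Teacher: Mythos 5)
Your proposal is correct and follows essentially the same route the paper indicates: the paper itself disposes of Proposition~\ref{pr:tsign} with the single remark that its proof is parallel to that of Theorem~\ref{tm:sign}, and your proposal is precisely the expansion of that remark, replacing $\M_{k+1}$, $ev_j^\de$, $\phi$ and $\tilde\phi$ by their $I\times(\cdot)$ versions, noting that the sign of $\Id_I\times f$ equals the sign of $f$, and then invoking Propositions~\ref{prop:q_sgn_c} and~\ref{prop:sgn_phit} unchanged. The only small imprecision is the phrase ``average over the $\Z/2\Z$ action generated by $\tilde\phi_I$'': since $\tilde\phi_I$ maps $\Mt_{k+1}(L,\de)$ to $\Mt_{k+1}(\phi(L),-\phi_*\de)$, it is an involution only on the disjoint union over $L,\phi(L)$ and all $\de$, not on a single moduli space; rephrasing the compatibility condition accordingly (or simply citing, as the paper does, that the virtual technique allows $\tilde\phi_I$-compatible perturbation data with prescribed boundary restrictions) makes the argument clean.
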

The proof of Proposition~\ref{pr:tsign} is parallel to the proof of Theorem~\ref{tm:sign}. The following is an immediate consequence.
\begin{cy}\label{cy:piphi}
We have
\begin{align*}
\phi^*\m_k^{\phi(L),\st,t}(\alpha_1,\ldots,\alpha_k) &= (-1)^{\spe{\tau}{\alpha} + k + 1}\m_k^{L,\st,t}(\phi^*\alpha_k,\ldots,\phi^*\alpha_1), \\
\phi^*\co_k^{\phi(L),\st,t}(\alpha_1,\ldots,\alpha_k) &= (-1)^{\spe{\tau}{\alpha} + k + 1}\co_k^{L,\st,t}(\phi^*\alpha_k,\ldots,\phi^*\alpha_1).
\end{align*}
\end{cy}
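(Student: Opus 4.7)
The plan is to reduce the corollary directly to Proposition~\ref{pr:tsign} by evaluating the identity there on pulled-back arguments and then applying the restriction $j_t^*$ (for the $\m$-equation) or $j_t^* i_{\d_t}$ (for the $\co$-equation). Writing $\pi_{\phi(L)}:I\times \phi(L)\to \phi(L)$ for the projection and setting $\at_i = \pi_{\phi(L)}^*\alpha_i$, the grading satisfies $|\at_i| = |\alpha_i|$, so $\spe{\tau}{\at} = \spe{\tau}{\alpha}$, and Proposition~\ref{pr:tsign} yields
\[
\hat\phi^*\mt_k^{\phi(L),\st}(\pi_{\phi(L)}^*\alpha_1,\ldots,\pi_{\phi(L)}^*\alpha_k) = (-1)^{\spe{\tau}{\alpha}+k+1}\mt_k^{L,\st}(\hat\phi^*\pi_{\phi(L)}^*\alpha_k,\ldots,\hat\phi^*\pi_{\phi(L)}^*\alpha_1).
\]

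Next I would verify the three naturality identities needed to move $\hat\phi^*$, $j_t^*$, $i_{\d_t}$ and $\pi_L^*$ past each other. Since $\hat\phi = \Id_I\times\phi$, we have $\pi_{\phi(L)}\circ \hat\phi = \phi\circ \pi_L$ as maps $I\times L\to \phi(L)$, which gives $\hat\phi^*\pi_{\phi(L)}^* = \pi_L^*\phi^*$. Similarly $\hat\phi\circ j_t^L = j_t^{\phi(L)}\circ \phi$ gives $(j_t^L)^*\hat\phi^* = \phi^*(j_t^{\phi(L)})^*$. Finally, because $\hat\phi$ acts as the identity on the $I$-factor, $d\hat\phi(\d_t) = \d_t$, so contraction with $\d_t$ commutes with $\hat\phi^*$, i.e. $i_{\d_t}\circ \hat\phi^* = \hat\phi^*\circ i_{\d_t}$.

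With these identities in hand, applying $(j_t^L)^*$ to both sides of the displayed equation above and using the definition of $\m_k^{\bullet,\st,t}$ converts the left-hand side into $\phi^*\m_k^{\phi(L),\st,t}(\alpha_1,\ldots,\alpha_k)$ and the right-hand side into $(-1)^{\spe{\tau}{\alpha}+k+1}\m_k^{L,\st,t}(\phi^*\alpha_k,\ldots,\phi^*\alpha_1)$, which is the first equation. The second equation is obtained in exactly the same way by applying $(j_t^L)^*i_{\d_t}$ instead, using that $i_{\d_t}$ commutes with $\hat\phi^*$ to produce the $\co_k^{\phi(L),\st,t}$ term on the left and the $\co_k^{L,\st,t}$ term on the right.

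There is no real obstacle here: the content is entirely in Proposition~\ref{pr:tsign}, and the corollary is a bookkeeping exercise combining that identity with the compatibilities between $\hat\phi$, $\pi_L$, $j_t$ and $i_{\d_t}$. The only thing to watch is that the sign $(-1)^{\spe{\tau}{\alpha}+k+1}$ produced by Proposition~\ref{pr:tsign} does not change after applying $j_t^*$ or $j_t^*i_{\d_t}$, which is immediate since these operations preserve the degrees of the input forms.
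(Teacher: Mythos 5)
Your proof is correct and is exactly the routine unwinding that the paper elides by calling the corollary "an immediate consequence" of Proposition~\ref{pr:tsign}: apply the proposition to $\pi_{\phi(L)}^*\alpha_i$, use the naturality identities $\hat\phi^*\pi_{\phi(L)}^* = \pi_L^*\phi^*$, $(j_t^L)^*\hat\phi^* = \phi^*(j_t^{\phi(L)})^*$, and $i_{\d_t}\hat\phi^* = \hat\phi^*i_{\d_t}$, and note that pullback and $\phi^*$ preserve degree so the sign $(-1)^{\spe{\tau}{\alpha}+k+1}$ is unaffected.
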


For $t \in I,$ define a $G_L$ gapped $A_\infty$ pre-homomorphism $\f^{L,\st,t} : C(L) \to C(L)$
inductively by
\begin{equation*}
\f_k^{L,\st,0} = \mathfrak{id}_k,
\end{equation*}
and
\begin{multline}
\frac{d}{dt} \f_{k,\beta}^{L,\st,t}(\alpha_1,\ldots,\alpha_k) = \\
= -\sum_{r = 0}^\infty \sum_{\substack{s_1,\ldots,s_r\\\sum_{j = 1}^r s_j = k}} \sum_{\substack{\beta_0,\ldots,\beta_r\\ \sum_j \beta_j = \beta \\(r,\beta_0) \neq (1,0)}}
\co_{r,\beta_0}^{L,\st,t}(\f^{L,\st,t}_{s_1,\beta_1}(\alpha_1,\ldots,\alpha_{l_1}),\ldots,\f^{L,\st,t}_{s_r,\beta_r}(\alpha_{k-s_r+1},\ldots,\alpha_k))\label{eq:ft}.
\end{multline}
This formula can be inductively solved by integrating in $t$ because $s_j + \beta_j < k + \beta$ for $j = 1,\ldots,r.$ The case $(r,\beta_0) = (1,0)$ is excluded from the sum because $\co^{L,\st,t}_{1,0} = 0$. Indeed, since $\mt^{L,\st}_{1,0} = d,$ we have $\co^{L,\st,t}_{1,0}(\alpha) = j_t^* i_{\d_t}d ( \pi_L^*\alpha) = j_t^* i_{\d_t} \pi^*_L d\alpha = 0.$ A more explicit formula for $\f_k^{L,\st,t}$ is given in Section 9 of~\cite{Fu09a} as well as a proof of the first claim of the following proposition.
\begin{pr}\label{pr:qismodE}
The pre-homomorphisms $\f^{L,\st,t}$ are $A_\infty$ quasi-isomorphisms modulo $T^E$ from $(C(L),\m_k^{L,\st,0})$ to $(C(L),\m_k^{L,\st,t}).$ Moreover,
\begin{equation}\label{eq:ftphi}
\phi^* \circ \f^{\phi(L),\st,t} = (\f^{L,\st,t})^{op}\circ \phi^*.
\end{equation}
\end{pr}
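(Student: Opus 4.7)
The first claim is proved by a direct induction on $k + \beta$ from the defining ODE~\eqref{eq:ft}, exactly as carried out in \cite[Section 9]{Fu09a}. So the substance of the plan concerns equation~\eqref{eq:ftphi}.

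Since $\phi^*$ is strict and $(\f^{L,\st,t})^{op}_k$ is defined by the formula of Section~\ref{ssec:inv}, expanding the composition law~\eqref{eq:claw} on each side reduces equation~\eqref{eq:ftphi} to the family of identities
\begin{equation*}
\phi^*\f_{k,\beta}^{\phi(L),\st,t}(\alpha_1,\ldots,\alpha_k)
=(-1)^{\spe{\tau}{\alpha}+k+1}\f_{k,\beta}^{L,\st,t}(\phi^*\alpha_k,\ldots,\phi^*\alpha_1)
\end{equation*}
indexed by $k\ge 0$, $\beta\in G_L$ with $\beta<E$, and $t\in I$. The plan is to prove this by induction on $k+\beta$ and integration in $t$. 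For the base step $t=0$, we have $\f^{\,\cdot\,,\st,0}=\mathfrak{id}$, and an immediate calculation from the definition of the opposite pre-homomorphism shows $\mathfrak{id}^{op}=\mathfrak{id}$; hence both sides agree at $t=0$ for every $(k,\beta)$.

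For the inductive step, I differentiate in $t$. On the left, applying the defining ODE~\eqref{eq:ft} for $\f^{\phi(L),\st,t}$ and then pulling $\phi^*$ through $\co^{\phi(L),\st,t}$ via Corollary~\ref{cy:piphi} produces a sum in which the arguments of $\co$ are reversed and each inner expression is of the form $\phi^*\f^{\phi(L),\st,t}_{s_i,\beta_i}(\alpha_{\cdots})$. The inductive hypothesis applies to each such inner term, because the condition $(r,\beta_0)\ne(1,0)$ in~\eqref{eq:ft} forces $s_i+\beta_i<k+\beta$ in every summand: if $r=1$ then $\beta_0>0$ so $\beta_1<\beta$, while if $r>1$ then each $s_i<k$. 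Reindexing the partition $(s_1,\ldots,s_r)\mapsto(s_r,\ldots,s_1)$, one finds that the resulting expression coincides with
$-(-1)^{\spe{\tau}{\alpha}+k+1}$ times the right-hand side of the ODE~\eqref{eq:ft} for $\f^{L,\st,t}$ evaluated on $(\phi^*\alpha_k,\ldots,\phi^*\alpha_1)$, which is precisely the derivative of the right-hand side of the desired identity. Integrating from $t=0$ and using the base step completes the induction.

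The main obstacle is sign bookkeeping. One must check that the three sources of signs --- the factor $(-1)^{\spe{\tau}{\gamma}+r+1}$ from Corollary~\ref{cy:piphi} applied to $\co^{\phi(L),\st,t}_{r,\beta_0}$, the signs $(-1)^{\spe{\tau}{\alpha^{(i)}}+s_i+1}$ from the inductive hypothesis applied to each inner $\f^{\phi(L),\st,t}_{s_i,\beta_i}$, and the global prefactor $(-1)^{\spe{\tau}{\alpha}+k+1}$ we want to extract --- combine to produce exactly the Koszul sign of the ODE for $\f^{L,\st,t}$ with reversed arguments. This is a purely combinatorial identity of the same flavor as the one verified in the proof of Proposition~\ref{prop:q_sgn_c}, obtained by decomposing the permutation $\tau$ on $k$ letters as the composition of $\tau$ on the $r$ blocks with the $\tau$'s on each block of sizes $s_1,\ldots,s_r$, together with the parity relation $\spe{\sigma\circ\rho}{\alpha}=\spe{\sigma}{\alpha}+\spe{\rho}{\alpha_{\sigma(\cdot)}}$ already used in~\eqref{eq:comp}.
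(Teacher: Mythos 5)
Your proposal follows the same route as the paper: observe that \eqref{eq:ftphi} holds at $t=0$ since $\f^{\,\cdot\,,\st,0}=\mathfrak{id}$, then differentiate in $t$ and proceed by induction on $k+\beta$ using the defining ODE~\eqref{eq:ft} together with Corollary~\ref{cy:piphi}, deferring the first claim to~\cite[Section 9]{Fu09a}. You fill in more of the Koszul-sign bookkeeping than the paper spells out, but the strategy and the key ingredients are identical.
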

\begin{proof}
To prove that $\f^{L,\st,t}$ is an $A_\infty$ homomorphism, first observe that $\f^{L,\st,0} = \mathfrak{id}$ is an $A_\infty$ homomorphism. Then, differentiate the $A_\infty$ homomorphism relations~\eqref{eq:aif} and proceed by induction using Corollary~\ref{cy:pi} together with the defining equation~\eqref{eq:ft} of $\f^{L,\st,t}.$ Furthermore, equation~\eqref{eq:ft} implies that $\f^{L,\st,t}_{1,0} = \Id_{C(L)},$ so $\f^{L,\st,t}$ is a quasi-isomorphism. To prove equation~\eqref{eq:ftphi}, first observe that it is satisfied for $t = 0$ because $\f^{L,\st,0} = \mathfrak{id}.$ Then, differentiate equation~\eqref{eq:ftphi} and proceed by induction using equation~\eqref{eq:ft} and Corollary~\ref{cy:piphi}.
\end{proof}
\subsection{Obstruction theory}

\begin{tm}\label{tm:omodE}
Let $0 < E \leq E',$ let $(C,\m^C)$ be a $G$-gapped $A_\infty$ algebra modulo $T^E$ and let $(D,\m^D)$ be a $G$-gapped $A_\infty$ algebra modulo $T^{E'}.$ Suppose $\f : (C,\m^C) \to (D,\m^D)$ is a $G$ gapped $A_\infty$ quasi-isomorphism modulo $T^E$. Then there exists $\m^{C'},$ an $A_\infty$ structure modulo $T^{E'}$ on $C,$ and $\f' : (C,\m^{C'}) \to (D, \m^D),$ an $A_\infty$ quasi-isomorphism modulo $T^{E'}$, such that $\m^C \cong \m^{C'} \mod T^E$ and $\f \cong \f' \mod T^E.$

Furthermore, suppose $\c_C: C \to C$ and $\c_D: D \to D$ are $\k$ structured involutions such that $\m^C$ is $\c_C$ self-dual and $\m^D$ is $\c_D$ self-dual and $\c^D \circ \f = \f^{op}\circ \c^C.$ Then, we can choose $\m^{C'}$ to be $\c_C$ self-dual and we can choose $\f'$ so that $\c^D \circ \f' = (\f')^{op}\circ \c^C.$
\end{tm}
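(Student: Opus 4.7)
My plan is to extend $\m^C$ and $\f$ simultaneously, gap by gap, adapting the standard obstruction-theoretic machinery of~\cite{Fu09a,FO09} and incorporating the self-duality averaging argument from Proposition~\ref{pr:obt}. By the finiteness property of $G$, the set $G \cap [E, E')$ is a finite list $\beta_1 < \cdots < \beta_n$; setting $\beta_{n+1} = E'$, I would extend the structure and homomorphism from satisfying all relations modulo $T^{\beta_i}$ to satisfying them modulo $T^{\beta_{i+1}}$ by induction on $i$, arriving after $n$ steps at the required extension modulo $T^{E'}$.

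The inductive step at stage $i$ requires defining, for each $k \geq 0$, operations $\m^C_{k, \beta_i} : \overline C^{\otimes k} \to \overline C$ and $\f_{k, \beta_i}: \overline C^{\otimes k} \to \overline D$ so that equation~\eqref{eq:Ggainf} at total gap $\beta_i$ holds for $\m^C$ and the gapped $A_\infty$ homomorphism relation at total gap $\beta_i$ holds for $\f$. Isolating the new unknowns, the equations take the form
\[
\b(\m^C_{k, \beta_i}) = O^{\m}_{k, \beta_i}, \qquad \delta(\f_{k, \beta_i}) + (\text{linear in }\m^C_{k, \beta_i}) = O^{\f}_{k, \beta_i},
\]
where $\b$ is the Hochschild coboundary on $CC^*(A_C)$ as in Lemma~\ref{lm:Aib}, $\delta$ is its analog on $CC^*(A_C;A_D)$ with $A_D$ viewed as an $A_C$-bimodule via $\f_{1,0}$, and $O^{\m}, O^{\f}$ depend only on the data already constructed. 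The $A_\infty$ relations for $\m^D$ modulo $T^{E'}$ and the $A_\infty$ homomorphism relations for $\f$ modulo $T^E$ combine to show that $(O^{\m}, O^{\f})$ is a cocycle in the mapping cone of the chain map $CC^*(A_D) \to CC^*(A_C;A_D)$ induced by $\f_{1,0}$. Since the quasi-isomorphism hypothesis forces $\f_{1,0}$ to induce an isomorphism of underlying algebras $A_C \to A_D$ (using the $A_\infty$ homomorphism relation at $(k,\beta) = (2,0)$), this mapping cone is acyclic, so primitives $\m^C_{k, \beta_i}, \f_{k, \beta_i}$ exist and solve the equations.

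For the self-dual case, I would symmetrize any chosen primitives in the spirit of Proposition~\ref{pr:obt}: replace $\m^C_{k, \beta_i}$ by $\tfrac{1}{2}(\m^C_{k, \beta_i} + \c_C^* t_{A_C}(\m^C_{k, \beta_i}))$ in the notation of Section~\ref{sec:hh}, and replace $\f_{k, \beta_i}$ by its average with the dual map
\[
(\alpha_1, \ldots, \alpha_k) \longmapsto (-1)^{\spe{\tau}{\alpha} + k+1}\c_D(\f_{k,\beta_i}(\c_C(\alpha_k), \ldots, \c_C(\alpha_1))).
\]
The self-dualities of $\m^D$ modulo $T^{E'}$ and of $\f$ modulo $T^E$ imply that the pair $(O^{\m}, O^{\f})$ is itself invariant under the corresponding dualization on the mapping-cone complex, so the averaged data still solve the equations; the averaging uses that $2$ is invertible, which holds since $\k = \C$ throughout. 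The main obstacle is the careful verification that $(O^{\m}, O^{\f})$ is a mapping-cone cocycle and is exact via the quasi-isomorphism hypothesis---a lengthy but essentially standard sign-tracking computation---together with ensuring that the symmetrization respects the coupling between the two equations, which is handled uniformly by working throughout in the mapping-cone formalism.
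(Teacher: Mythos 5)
The paper's proof is merely a citation to Theorem 7.2.72 of~\cite{FO09} and Theorem A.12 of~\cite{FO17a} plus the phrase ``a straightforward modification,'' so your attempt actually unpacks the obstruction theory. The high-level architecture---inducting over the finitely many gaps in $G \cap [E,E')$, simultaneously extending $\m^C$ and $\f$, packaging the coupled equations as a mapping-cone problem, using the quasi-isomorphism to make the cone acyclic, and averaging to preserve self-duality---is sound and matches the machinery behind the cited theorems.

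You do, however, misidentify the obstruction complex, and this is a genuine gap. You write the linearized equation as $\b(\m^C_{k,\beta_i}) = O^\m_{k,\beta_i}$ with $\b$ the Hochschild coboundary on $CC^*(A_C)$ in the sense of Lemma~\ref{lm:Aib}; but Lemma~\ref{lm:Aib} concerns a flat \emph{minimal} $A_\infty$ algebra, where $\m_{1,0}=0$ and $\b$ is built from the product $\m_{2,0}$. In Theorem~\ref{tm:omodE} there is no minimality hypothesis: $\m^C_{1,0}$ is generally nonzero, so $A_C = H^*(\overline C, \m^C_{1,0}) \neq \overline C$, and the unknowns $\m^C_{k,\beta_i}$, $\f_{k,\beta_i}$ and the obstruction cochains are maps defined on $\overline C^{\otimes k}$, not on $A_C^{\otimes k}$. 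Once you do a sub-induction on $k$ after fixing $\beta_i$, isolating $\m^C_{k,\beta_i}$ in equation~\eqref{eq:Ggainf} pairs it only with $\m^C_{1,0}$, so the correct linear operator is the differential on $Hom(\overline C^{\otimes k},\overline C)$ induced by $\m^C_{1,0}$, not the Hochschild $\b$; likewise for the operator on $Hom(\overline C^{\otimes k},\overline D)$. The relevant cone is that of the post-composition map $Hom(\overline C^{\otimes k},\overline C) \to Hom(\overline C^{\otimes k},\overline D)$, which is a quasi-isomorphism of chain complexes because $\f_{1,0}$ is one and $\k$ is a field, and whose cone is therefore acyclic. (Your source $CC^*(A_D)$ for the cone also appears to be a slip; $O^\m$ lives on the $C$ side.) With the complexes and differentials corrected, the remainder of your argument---cocycle verification, exactness, and symmetrization by $\tfrac12$---goes through as sketched; you should make explicit the hypothesis $\ch\k \neq 2$ for the averaging, which the theorem statement does not state but which is needed and holds in all applications.
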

\begin{proof}
The claim without self-duality follows from Theorem 7.2.72 in~\cite{FO09}. The claim with self-duality for $\c_C = \Id_C$ and $\c_D = \Id_D$ follows from Theorem A.12 in~\cite{FO17a}. A straightforward modification of Theorem A.12 in~\cite{FO17a} gives the general case.
\end{proof}

Choose a sequence $E_1 \leq E_2 \leq \cdots$ such that $E_i \to \infty$. For each $i,$ choose a perturbation datum $\s_i$ for the moduli spaces $\M_{k+1}(L,\de)$ with $\omega(\de) < E_i$ and let $\m^{L,i} = \m^{L,\s_i}$ be the $A_\infty$ structure modulo $T^{E_i}$ on $C(L)$ given by Proposition~\ref{pr:aimodE}.
\begin{tm}
There exists an $A_\infty$ structure $\m^L$ on $C(L)$ that is quasi-isomorphic modulo $T^{E_i}$ to $\m^{L,i}$ for each $i$ and such that $\phi^* : (C(\phi(L)), \m^{\phi(L)}) \to (C(L),(\m^{L})^{op})$ is a strict $A_\infty$ isomorphism.
\end{tm}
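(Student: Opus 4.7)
The plan is a doubly-inductive construction: first extend the truncated structures $\{\m^{L,i}\}$ to a full $A_\infty$ structure $\m^L$ on $C(L)$, then define $\m^{\phi(L)}$ by the pull-back transfer formula of Theorem~\ref{tm:signmodE}, which makes the strict $A_\infty$ isomorphism property built-in.

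For the construction of $\m^L$, set $\m^{L,(1)} := \m^{L,1}$ and $\g^1 := \mathfrak{id}$. Inductively, given an $A_\infty$ structure $\m^{L,(i)}$ on $C(L)$ modulo $T^{E_i}$ and a $G_L$ gapped quasi-isomorphism $\g^i : (C(L),\m^{L,(i)}) \to (C(L),\m^{L,i})$ modulo $T^{E_i}$, choose a pseudoisotopy perturbation datum $\st_i$ over the $I$-family of moduli spaces with $\omega(\de) < E_i$ restricting to $\s_i$ at $t = 0$ and to $\s_{i+1}$ at $t = 1$. Proposition~\ref{pr:qismodE} then produces a quasi-isomorphism $\f^i : (C(L),\m^{L,i}) \to (C(L),\m^{L,i+1})$ modulo $T^{E_i}$. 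The composition $\f^i \circ \g^i$ is a quasi-isomorphism modulo $T^{E_i}$ from $(C(L),\m^{L,(i)})$, viewed as defined modulo $T^{E_i}$, into $(C(L),\m^{L,i+1})$, which is defined modulo $T^{E_{i+1}}$. Applying Theorem~\ref{tm:omodE} with $E = E_i$ and $E' = E_{i+1}$ yields an extension $\m^{L,(i+1)}$ satisfying $\m^{L,(i+1)} \equiv \m^{L,(i)} \pmod{T^{E_i}}$, together with a quasi-isomorphism $\g^{i+1}$ modulo $T^{E_{i+1}}$ to $\m^{L,i+1}$. Since $E_i \to \infty$, the operations $\m^{L,(i)}_k$ converge in the non-Archimedean norm to operations $\m^L_k$ defining a full $A_\infty$ structure on $C(L)$, and $\g^i$ witnesses that $\m^L$ is quasi-isomorphic modulo $T^{E_i}$ to $\m^{L,i}$ for each $i$.

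With $\m^L$ in hand, define $\m^{\phi(L)}$ on $C(\phi(L))$ by
\[
\m^{\phi(L)}_k(\alpha_1,\ldots,\alpha_k) := (-1)^{\spe{\tau}{\alpha} + k + 1}\, \phi^* \m^L_k(\phi^*\alpha_k,\ldots,\phi^*\alpha_1).
\]
Because $\phi^* : C(\phi(L)) \to C(L)$ is a strict isomorphism of graded complete normed vector spaces satisfying $(\phi^*)^{-1} = \phi^*$, this formula is precisely the statement that $\phi^*$ is a strict $A_\infty$ isomorphism from $(C(\phi(L)),\m^{\phi(L)})$ to $(C(L),(\m^L)^{op})$, and the $A_\infty$ relations for $\m^{\phi(L)}$ follow by transporting the relations for $(\m^L)^{op}$ across $\phi^*$. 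Consistency with the moduli-space operations $\m^{\phi(L),i}$ is automatic from Theorem~\ref{tm:signmodE}, which identifies $\m^{\phi(L),i}$ modulo $T^{E_i}$ with the $\phi^*$-transfer of $\m^{L,i}$: transferring $\g^i$ along $\phi^*$ then gives a quasi-isomorphism modulo $T^{E_i}$ from $(C(\phi(L)),\m^{\phi(L)})$ to $(C(\phi(L)),\m^{\phi(L),i})$.

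The main obstacle lies in the inductive step, which combines two ingredients: the pseudoisotopy quasi-isomorphism $\f^i$ interpolating between perturbation data $\s_i$ and $\s_{i+1}$, supplied by Propositions~\ref{pr:pimodE} and~\ref{pr:qismodE}, and the obstruction-theoretic extension of Theorem~\ref{tm:omodE} along the $T$-filtration. Defining $\m^{\phi(L)}$ via the explicit transfer formula, rather than by an independent moduli-theoretic construction, cleanly avoids the need for a joint $\phi^*$-equivariant extension of both algebras simultaneously.
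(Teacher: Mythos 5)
Your inductive construction of $\m^L$ by alternating the pseudoisotopy quasi-isomorphism from Proposition~\ref{pr:qismodE} with the obstruction-theoretic extension of Theorem~\ref{tm:omodE} is fine, and agrees with the paper's (the double index $\m^{L,i,j}$ there is just different bookkeeping for the same induction). The gap is in the last step, where you ``define'' $\m^{\phi(L)}$ by the transfer formula and declare the strict isomorphism built-in. This works when $\phi(L)\neq L$, where one genuinely has two distinct algebras and is free to take the transfer of one as the definition of the other. But the case that matters for the paper's main theorem is $\phi(L)=L$: a $\phi$ anti-symmetric Lagrangian has $\phi(L)=L$ by definition, and that is the whole point of Remark~\ref{rem:phimodE}.

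When $\phi(L)=L$, we have $C(\phi(L))=C(L)$, and $\m^{\phi(L)}$ is not a free parameter---it is the same extension $\m^L$ of the same truncated operations. The theorem then asserts that the single structure $\m^L$ you built is $\phi^*$ self-dual, i.e.\ $\phi^*\m_k^L(\alpha_1,\ldots,\alpha_k)=(-1)^{\spe{\tau}{\alpha}+k+1}\m_k^L(\phi^*\alpha_k,\ldots,\phi^*\alpha_1)$. Your transfer formula does not give this: it merely defines a second $A_\infty$ structure on $C(L)$, and nothing in your inductive application of Theorem~\ref{tm:omodE} forces the extension produced at each stage to satisfy this symmetry. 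The symmetry must be imposed inside the extension step, which is exactly why Theorem~\ref{tm:omodE} carries the ``Furthermore'' clause about $\k$ structured involutions. One must invoke that clause with $\c_C=\c_D=\phi^*$, having first verified (via the last assertion of Proposition~\ref{pr:qismodE}) that the pseudoisotopy quasi-isomorphism $\f^i$ feeding the extension satisfies $\phi^*\circ\f^i=(\f^i)^{op}\circ\phi^*$, so the hypothesis $\c_D\circ\f=\f^{op}\circ\c_C$ is met. Your closing remark that the transfer formula ``cleanly avoids the need for a joint $\phi^*$-equivariant extension'' is therefore exactly backwards in the self-dual case: the joint equivariant extension is the content of the theorem there, and omitting it leaves the argument open precisely where it is needed.
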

\begin{proof}
Theorem~\ref{tm:signmodE} asserts that $\phi^* : (C(\phi(L)), \m^{\phi(L),i}) \to (C(L),(\m^{L,i})^{op})$ is a strict $A_\infty$ isomorphism. For each $i,$ choose perturbation data $\st_i$ for the moduli spaces $\Mt_{k+1}(L,\de)$ with $\omega(\de) < E_i$ extending the perturbation data $\s_i$ on $\{0\}\times \M_{k+1}(L,\de)$ and the perturbation data $\s_{i+1}$ on $\{1\} \times \M_{k+1}(L,\de).$ Proposition~\ref{pr:qismodE} with $t =1$ gives $\f^{L,i} : (C(L),\m_k^{L,i}) \to (C(L),\m_k^{L,i+1})$, an $A_\infty$ quasi-isomorphism modulo $T^{E_i}$ satisfying $\phi^* \circ \f^{\phi(L),i} = (\f^{L,i})^{op}\circ \phi^*.$ For $i \leq j,$ by an induction argument and Theorem~\ref{tm:omodE}, we construct sequences $\m^{L,i,j}$ of $A_\infty$ structures modulo $T^{E_j}$ on $C(L)$ and sequences $\f^{L,i,j} : (C(L),\m^{L,i,j}) \to (C(L),\m^{L,j})$ of $A_\infty$ quasi-isomorphisms modulo $T^{E_j}$ with the following properties:
\begin{enumerate}
\item
$\m_k^{L,i,0} = \m_k^{L,i},$
\item\label{it:le}
$\m_k^{L,i,j} \cong \m_k^{L,i,j'} \mod T^{E_j}$ for $j < j',$
\item\label{it:phid}
$\phi^* : (C(\phi(L)), \m^{\phi(L),i,j}) \to (C(L),(\m^{L,i,j})^{op})$ is a strict $A_\infty$ isomorphism.
\item
$\phi^* \circ \f^{\phi(L),i,j} = (\f^{L,i,j})^{op}\circ \phi^*.$
\end{enumerate}
Indeed, when $\phi(L) \neq L,$ we first construct $\m^{L,i,j}$ and $\f^{L,i,j}$ without regard for $\phi^*$ and then take $\m^{\phi(L),i,j}_k(\alpha_1,\ldots,\alpha_k) = \phi^* \m^{L,i,j}_k(\phi^*\alpha_k,\ldots,\phi^*\alpha_1)$ and $\f^{\phi(L),i,j} = \phi^* \circ \f^{L,i,j} \circ \phi^*.$ When $\phi(L) = L,$ we use the duality claim of Theorem~\ref{tm:omodE} for the involution $\phi^* : C(L) \to C(L).$

Take $\m_k^L  = \lim_{j \to \infty} \m_k^{L,0,j}.$ By~\ref{it:le}, the limit exists and $\f^{0,j} : (C(L),\m^L) \to (C(L),\m^{L,j})$ is an $A_\infty$ quasi-isomorphism modulo $T^{E_j}$. It follows from~\ref{it:phid} that $\phi^* : (C(\phi(L)), \m^{\phi(L)}) \to (C(L),\m^L)$ is a strict $A_\infty$ isomorphism.
\end{proof}

\bibliographystyle{../amsabbrvcnobysame}
\bibliography{../ref}
\end{document}